\newtheoremstyle{natural}
{\parsep}   
{\parsep}   
{\normalfont}  
{0pt}       
{\bfseries} 
{.}         
{5pt plus 1pt minus 1pt} 
{}          
\newtheoremstyle{remark}
{\parsep}   
{\parsep}   
{\normalfont}  
{0pt}       
{\itshape} 
{.}         
{5pt plus 1pt minus 1pt} 
{}          
\theoremstyle{natural}
\newtheorem{thm}{Theorem}[section]
\newtheorem{cor}[thm]{Corollary}
\newtheorem{lem}[thm]{Lemma}
\newtheorem{prp}[thm]{Proposition}
\theoremstyle{remark}
\newtheorem{rmk}[thm]{Remark}
\numberwithin{equation}{section}
\crefname{thm}{theorem}{theorems}
\crefname{prp}{proposition}{propositions}
\crefname{cor}{corollary}{corollaries}
\newcommand{\Z}{\mathbb{Z}}
\newcommand{\Q}{\mathbb{Q}}
\newcommand{\N}{\mathbb{N}}
\newcommand{\R}{\mathbb{R}}
\newcommand{\C}{\mathbb{C}}
\newcommand{\A}{\mathbb{A}}
\newcommand{\bs}{\backslash}
\newcommand{\cb}[1]{\left\{{#1}\right\}}
\newcommand{\cbm}[2]{\left\{{#1}\;\middle|\;{#2}\right\}}
\newcommand{\diag}{\operatorname{diag}}
\newcommand{\e}{\operatorname{e}}
\newcommand{\id}{\operatorname{id}}
\newcommand{\im}{\operatorname{Im}}
\newcommand{\pd}[2]{\frac{\partial{#1}}{\partial{#2}}}
\newcommand{\ppmod}[1]{\hspace{-0.2cm}\pmod{#1}}
\newcommand{\ol}[1]{\overline{#1}}
\newcommand{\ord}{\operatorname{ord}}
\newcommand{\rb}[1]{\left({#1}\right)}
\newcommand{\sbe}{\subseteq}
\newcommand{\vb}[1]{\left| {#1} \right|}
\newcommand{\Res}{\operatorname{Res}}
\newcommand{\GL}{\operatorname{GL}}
\newcommand{\SL}{\operatorname{SL}}
\newcommand{\Sp}{\operatorname{Sp}}
\newcommand{\fsp}{\mathfrak{sp}}
\newcommand{\mc}{\mathcal}
\newcommand{\ba}{\[\begin{aligned}}
\newcommand{\ea}{\end{aligned}\]}
\newcommand{\bp}{\begin{pmatrix}}
\newcommand{\ep}{\end{pmatrix}}
\newcommand{\bv}{\begin{vmatrix}}
\newcommand{\ev}{\end{vmatrix}}
\newcommand{\bea}{\begin{enumerate}[label=(\alph*)]}
\newcommand{\ber}{\begin{enumerate}[label=(\roman*)]}
\newcommand{\ben}{\begin{enumerate}[label=(\arabic*)]}
\newcommand{\beani}{\begin{enumerate}[leftmargin=*, label=(\alph*), wide, labelindent=0pt, labelwidth=*]}
\newcommand{\berni}{\begin{enumerate}[leftmargin=*, label=(\roman*), wide, labelindent=0pt, labelwidth=*]}
\newcommand{\benni}{\begin{enumerate}[leftmargin=*, label=(\arabic*), wide, labelindent=0pt, labelwidth=*]}
\newcommand{\ee}{\end{enumerate}}
\title{Fourier coefficients of $\Sp(4)$ Eisenstein Series}
\author{Siu Hang Man}
\address{Charles University, Faculty of Mathematics and Physics, Department of Algebra, Sokolovská 49/83, 186 75 Praha 8, Czechia}
\email{shman@karlin.mff.cuni.cz}
\thanks{The author is supported by the DAAD Graduate School Scholarship Programme, and OP RDE project No. CZ.02.2.69/0.0/0.0/18\_053/0016976 International mobility of research, technical and administrative staff at the Charles University.}
\subjclass[2020]{11E45, 11F12, 11F30, 11F46.}
\keywords{Eisenstein series, constant terms, Fourier coefficients.}
\begin{document}

\begin{abstract}
We compute explicit formulae for the constant terms and Fourier coefficients for Eisenstein series on $\Sp(4,\R)$, in terms of zeta functions and Whittaker functions. We also develop a generalisation of Ramanujan sums to $\Sp(4,\Z)$, which appears as coefficients in the Fourier expansion for the minimal Eisenstein series.
\end{abstract}

\maketitle

\section{Introduction}

In the theory of automorphic forms, Eisenstein series are important building blocks of the spectral decomposition. The goal of the article is to give a very explicit formulation of properties of $\Sp(4)$ Eisenstein series in the classical language. Much of the theory was already worked out implicitly in the works of Langlands \cite{Arthur1979,Langlands1976}. However, applications in analytic number theory often require explicit formulae. This applies in particular to trace formulae and relative trace formulae (à la Kuznetsov) whose use in analytic number theory is based on their explicit shapes \cite{Blomer2021}. Such formulae are only worked out for few groups. Besides the classical case $\GL(2)$, such explicit computations have only been done for $\GL(3)$ by Bump, Goldfeld and others \cite{BalakciThesis,Bump1984,BFG1988,Goldfeld2006,ThillainatesanThesis}, with hints on how to generalise to $\GL(n)$. The group $\Sp(4)$ is a natural candidate as a first step for the generalisation of these computations to a group other than $\GL(n)$. It is worth noting that some work has been done on the exceptional group $G_2$ \cite{JR1997,Xiong2017}.

Eisenstein series find many applications in number theory. The Fourier coefficients of Eisenstein series feature in the construction of automorphic L-functions by the Langlands-Shahidi method \cite{Shahidi2010}. Eisenstein series are also connected with algebraic objects, such as quadratic forms \cite{Blomer2020,Siegel1939} and algebraic varieties \cite{FMT1989}. Through the construction of the Eisenstein series, we see that their Fourier coefficients feature in a generalised version of exponential sums and divisor-type functions, which are worthy of investigating in their own right. 

Let $G = \Sp(4,\R)$. We shall fix a maximal compact subgroup $K$ and a Borel subgroup $B$ of $G$ (see \Cref{section:group_decomp}). Let $\Gamma = \Sp(4,\Z)$, the standard arithmetic subgroup of $G$. Let $P_0, P_\alpha, P_\beta$ denote respectively the standard minimal, Siegel and Jacobi parabolic subgroups, with respect to the chosen Borel subgroup $B$. Then we have the Levi decompositions $P_j = N_j M_j$, $j\in\cb{0,\alpha,\beta}$ of the parabolic subgroups, where $N_j$ is unipotent and $M_j$ is the Levi subgroup. The Eisenstein series for the minimal parabolic subgroup $P_0$ is then defined as a function on $G/K$ to be
\ba
E_0(g, \nu) := \sum\limits_{\gamma \in (P_0 \cap \Gamma) \bs \Gamma} I_0(\gamma g, \nu),
\ea
for $g\in G/K$ of the form \eqref{eq:g_iwasawa}, where $\nu = (\nu_1, \nu_2) \in \C^2$, and $I_0(g, \nu) := y_1^{\nu_1+2} y_2^{2\nu_2-\nu_1+1}$. 

Let $f$ be an automorphic form on $\GL(2)$. Denote by $m_\alpha: G/K\to M_\alpha/ (K\cap M_\alpha)$ the projection map with respect to the decomposition $G = P_\alpha K = N_\alpha M_\alpha K$. The map $m_\beta: G/K \to M_\beta/ (K\cap M_\beta)$ is defined analogously. The Eisenstein series for the Siegel parabolic subgroup $P_\alpha$ is defined to be
\ba
E_\alpha (g, \nu,f) := \sum\limits_{\gamma \in (P_\alpha \cap \Gamma) \bs \Gamma} f(m_\alpha(\gamma g)) I_\alpha(\gamma g, \nu),
\ea
where $\nu\in \C$, and $I_\alpha(g,\nu) := y_1^{\nu+3/2} y_2^{\nu+3/2}$. The Eisenstein series for the Jacobi parabolic subgroup $P_\beta$ is defined to be
\ba
E_\beta (g,\nu,f) := \sum\limits_{\gamma \in (P_\beta \cap \Gamma) \bs \Gamma} f(m_\beta(\gamma g)) I_\beta(\gamma g, \nu),
\ea
where $\nu\in\C$, and $I_\beta(g,\nu) := y_1^{\nu+2}$. 

It is well-known (see \cite{Langlands1976, MW1995}) that these Eisenstein series, while originally defined on an open subset of the complex space where the series converge, can be extended meromorphically to functions defined on the whole complex space.

Let $U$ be the maximal unipotent subgroup of $G$, with respect to the chosen Borel subgroup $B$. Let $\chi = \chi_{m_1,m_2}$ be a character of $U(\Z)\bs U(\R)$ (see \eqref{eq:cd}). Then the Fourier coefficient for an Eisenstein series $E$ (or actually any automorphic function) corresponding to $\chi$ is given by (see \cite{Shahidi2010})
\ba
E_\chi (g) := \int_{U(\Z)\bs U(\R)} E(\eta g) \bar\chi(\eta) d\eta.
\ea

Our main result is a completely explicit formula for the Fourier coefficients of the minimal Eisenstein series $E_0(g,\nu)$.

\begin{thm}\label{Fourier:min}
Let $\chi = \chi_{m_1,m_2}$ be a character of $U(\Z)\bs U(\R)$. Then the Fourier coefficients of the minimal Eisenstein series $E_0 (g,\nu)$ are given as follows. For $m_1=m_2=0$ we have
\ba
E_{0, \chi_{0,0}} (g,\nu) &= W_{\id} (g,\nu,\chi_{0,0}) + \frac{\zeta(2\nu_1-2\nu_2)}{\zeta(2\nu_1-2\nu_2+1)} W_{s_\alpha} (g,\nu,\chi_{0,0}) + \frac{\zeta(2\nu_2-\nu_1)}{\zeta(2\nu_2-\nu_1+1)} W_{s_\beta} (g,\nu,\chi_{0,0})\\
&\hspace{0.5cm}+ \frac{\zeta(2\nu_1-2\nu_2)}{\zeta(2\nu_1-2\nu_2+1)}\frac{\zeta(\nu_1)}{\zeta(\nu_1+1)} W_{s_\alpha s_\beta} (g,\nu,\chi_{0,0})\\
&\hspace{0.5cm}+ \frac{\zeta(2\nu_2-\nu_1)}{\zeta(2\nu_2-\nu_1+1)} \frac{\zeta(2\nu_2)}{\zeta(2\nu_2+1)} W_{s_\beta s_\alpha} (g,\nu,\chi_{0,0})\\
&\hspace{0.5cm}+\frac{\zeta(\nu_1)}{\zeta(\nu_1+1)} \frac{\zeta(2\nu_1-2\nu_2)}{\zeta(2\nu_1-2\nu_2+1)} \frac{\zeta(2\nu_2)}{\zeta(2\nu_2+1)} W_{s_\alpha s_\beta s_\alpha} (g,\nu,\chi_{0,0})\\
&\hspace{0.5cm}+\frac{\zeta(2\nu_2)}{\zeta(2\nu_2+1)} \frac{\zeta(2\nu_2-\nu_1)}{\zeta(2\nu_2-\nu_1+1)} \frac{\zeta(\nu_1)}{\zeta(\nu_1+1)} W_{s_\beta s_\alpha s_\beta} (g,\nu,\chi_{0,0})\\
&\hspace{0.5cm}+\frac{\zeta(2\nu_1-2\nu_2)}{\zeta(2\nu_1-2\nu_2+1)}\frac{\zeta(2\nu_2-\nu_1)}{\zeta(2\nu_2-\nu_1+1)}\frac{\zeta(\nu_1)}{\zeta(\nu_1+1)}\frac{\zeta(2\nu_2)}{\zeta(2\nu_2+1)} W_{w_0} (g,\nu,\chi_{0,0}).
\ea
For $m_1\ne 0$, $m_2=0$ we have
\ba
E_{0, \chi_{m_1, 0}} (g,\nu) &= \frac{\sigma_{2\nu_2-2\nu_1}(m_1)}{\zeta(2\nu_1-2\nu_2+1)} W_{s_\alpha} (g,\nu,\chi_{m_1,0}) + \frac{\zeta(2\nu_2-\nu_1)}{\zeta(2\nu_2-\nu_1+1)} \frac{\sigma_{-2\nu_2}(m_1)}{\zeta(2\nu_2+1)} W_{s_\beta s_\alpha}(g,\nu,\chi_{m_1,0})\\
&\hspace{0.5cm}+\frac{\zeta(\nu_1)}{\zeta(\nu_1+1)} \frac{\zeta(2\nu_1-2\nu_2)}{\zeta(2\nu_1-2\nu_2+1)} \frac{\sigma_{-2\nu_2}(m_1)}{\zeta(2\nu_2+1)} W_{s_\alpha s_\beta s_\alpha}(g,\nu,\chi_{m_1,0})\\
&\hspace{0.5cm}+ \frac{\sigma_{2\nu_2-2\nu_1} (m_1)}{\zeta(2\nu_1-2\nu_2+1)}\frac{\zeta(2\nu_2-\nu_1)}{\zeta(2\nu_2-\nu_1+1)}\frac{\zeta(\nu_1)}{\zeta(\nu_1+1)}\frac{\zeta(2\nu_2)}{\zeta(2\nu_2+1)} W_{w_0}(g,\nu,\chi_{m_1, 0}).
\ea
For $m_1=0$, $m_2\ne 0$ we have
\ba
E_{0, \chi_{0,m_2}} (g,\nu) &= \frac{\sigma_{\nu_1-2\nu_2}(m_2)}{\zeta(2\nu_2-\nu_1+1)} W_{s_\beta} (g,\nu,\chi_{0,m_2}) + \frac{\zeta(2\nu_1-2\nu_2)}{\zeta(2\nu_1-2\nu_2+1)} \frac{\sigma_{-\nu_1}(m_2)}{\zeta(\nu_1+1)} W_{s_\alpha s_\beta} (g,\nu,\chi_{0,m_2})\\
&\hspace{0.5cm}+ \frac{\zeta(2\nu_2)}{\zeta(2\nu_2+1)} \frac{\zeta(2\nu_2-\nu_1)}{\zeta(2\nu_2-\nu_1+1)}\frac{\sigma_{-\nu_1}(m_2)}{\zeta(\nu_1+1)} W_{s_\beta s_\alpha s_\beta} (g,\nu,\chi_{0,m_2})\\
&\hspace{0.5cm}+ \frac{\sigma_{\nu_1-2\nu_2} (m_2)}{\zeta(2\nu_2-\nu_1+1)}\frac{\zeta(2\nu_1-2\nu_2)}{\zeta(2\nu_1-2\nu_2+1)}\frac{\zeta(\nu_1)}{\zeta(\nu_1+1)}\frac{\zeta(2\nu_2)}{\zeta(2\nu_2+1)} W_{w_0} (g,\nu,\chi_{0, m_2}).
\ea
For $m_1,m_2\ne 0$ we have
\ba
E_{0, \chi_{m_1, m_2}} (g,\nu) = \frac{\sigma_{-\nu_2, \nu_2-\nu_1} (m_1, m_2)}{\zeta(2\nu_1-2\nu_2+1) \zeta(2\nu_2-\nu_1+1) \zeta(\nu_1+1) \zeta(2\nu_2+1)} W_{w_0} (g,\nu,\chi_{m_1, m_2}).
\ea
Here $W_w(g,\nu,\chi)$ are Jacquet's Whittaker functions, defined in \eqref{eq:wd}, $\sigma_\nu(m) = \sum_{d\mid m} d^\nu$ is the divisor sum function, and $\sigma_{\nu_1,\nu_2}(m_1,m_2)$ is a multiplicative function defined in \eqref{eq:sd}.
\end{thm}

This article is organised as follows, and we point out a number of auxiliary results of independent interest that are derived in the rest of the paper. In \Cref{section:group_decomp}, we give for $G = \Sp(4)$ the explicit characterisations of the parabolic subgroups and other associated groups, which are used later on. In \Cref{section:coset_rep}, we describe the coset representatives of $(P_j\cap \Gamma) \bs \Gamma$, $j\in\cb{0,\alpha,\beta}$, using Plücker coodinates as defined in \cite{BFH1990}. We compute the explicit Bruhat decomposition in \Cref{section:Bruhat}. Alternative expressions for Eisenstein series are given in \Cref{Eisenstein_alt}. 

In \Cref{section:const_terms}, we consider the constant terms of the minimal Eisenstein series along different parabolic subgroups. The strategy is to utilise the functional equations of intertwining operators. Explicit formulae for constant terms are given in \Cref{constterm:minmin,constterm:minSiegel,constterm:minNS}. They work out in detail the group-theoretic expression given in \cite[II.1.7]{MW1995}. A particular advantage of this approach is that we obtain a nicer expression for the Fourier coefficient $E_{0,\chi_{0,0}}$ in terms of pure zeta quotients.

In \Cref{section:Sp4_Ramanujan}, we consider a generalisation of Ramanujan sums to $\Sp(4,\Z)$, which appears in the Fourier coefficients of Eisenstein series. The construction of such exponential sums for the $\GL(3)$ case is due to Bump \cite{Bump1984}. As in the $\GL(3)$ case, the degenerate sums reduce to classical Ramanujan sums, justifying the use of the term ``generalisation''. The Dirichlet series associated to this Ramanujan sum is computed in \Cref{prp:Sp4rs}. 

In \Cref{section:Fourier_coeff}, we introduce the Whittaker functions on $\Sp(4)$ in terms of Jacquet integrals \cite{Jacquet1967}. We are then able to compute the Fourier coefficients of the minimal Eisenstein series in terms of these generalised Ramanujan sums and Whittaker functions, proving \Cref{Fourier:min}. 

In \Cref{section:degenerate}, we give results for the residual Eisenstein series $E_\alpha(g,\nu,1)$ and $E_\beta(g,\nu,1)$, by considering them as residues of the minimal Eisenstein series. Formulae for constant terms are given in \Crefrange{constterm:Siegelmin}{constterm:NSNS}, and formulae for Fourier coefficients are given in \Cref{Fourier:Siegel,Fourier:NS}.

\section*{Acknowledgement}

The author would like to thank Valentin Blomer for his guidance on the project.

\section{Group decompositions}\label{section:group_decomp}

Let $G = \Sp(4,\R)$ be the real symplectic group of degree 2, namely
\ba
G = \cbm{g\in\GL(4,\R)}{g^\top\bp 0&I_2\\-I_2&0\ep g = \bp0&I_2\\-I_2&0\ep}, \quad I_2 = \bp 1&0\\0&1\ep,
\ea
where $g^\top$ denotes the matrix transpose of $g$ as usual. Let $T$ and $U$ be a maximal split torus and a maximal unipotent subgroup of $G$ respectively, defined as follows:
\ba
T &= \cb{\diag(y_1,y_2,y_1^{-1},y_2^{-1}) \in G},\\
U &= \cbm{ \bp 1&n_1&n_2&n_3\\&1&n_4&n_5\\&&1\\&&-n_1&1\ep \in G}{ n_3 = n_1n_5+n_4}.
\ea
Then $B:=TU$ is a Borel subgroup of $G$. We shall denote by $T^+$ the subgroup of $T$ with positive entries.

Let $X(T)$ and $X^*(T)$ be the character group and the cocharacter group of $T$ respectively, with the natural pairing $\langle-,-\rangle: X(T)\times X^*(T)\mapsto \Z$. Let $\alpha, \beta\in X(T)$ be such that 
\ba
\alpha(\diag(y_1,y_2,y_1^{-1},y_2^{-1})) &= y_1y_2^{-1}, & \quad \beta(\diag(y_1,y_2,y_1^{-1},y_2^{-1})) &= y_2^2.
\ea
Then $\Delta = \cb{\alpha,\beta}$ is a set of simple roots, and $\Psi^+ = \cb{\alpha, \beta, \alpha+\beta, 2\alpha+\beta}$ is the set of positive roots with respect to $(B,T)$. We denote by $s_\alpha$ and $s_\beta$ the simple reflections in the hyperplane orthogonal to $\alpha$ and $\beta$ respectively. Then the Weyl group is given by
\ba
W = \cb{1,s_\alpha,s_\beta,s_\alpha s_\beta, s_\beta s_\alpha, s_\alpha s_\beta s_\alpha, s_\beta s_\alpha s_\beta, s_\alpha s_\beta s_\alpha s_\beta =: w_0}.
\ea 
The elements of the Weyl group can be embedded in $\Sp(4,\Z)$ by setting
\ba
s_\alpha = \bp & 1\\ 1\\ &&&1\\ &&1\ep, \quad s_\beta = \bp 1\\ &&&1\\ &&1\\ &-1\ep.
\ea 
The standard parabolic subgroups of $G$ are given by 
\ba
P_0 &= \cb{\bp *&*&*&*\\  &*&*&*\\ &&*\\ &&*&* \ep} \cap G & &\text{(minimal parabolic subgroup),}\\
P_\alpha &= \cb{\bp *&*&*&*\\ *&*&*&*\\ &&*&*\\ &&*&* \ep} \cap G & &\text{(Siegel parabolic subgroup),}\\
P_\beta &= \cb{\bp *&*&*&*\\  &*&*&*\\ &&*\\ &*&*&* \ep} \cap G & &\text{(Jacobi parabolic subgroup),}
\ea
corresponding to subsystems of roots generated by $\emptyset, \cb{\alpha}$ and $\cb{\beta}$ respectively. We have Levi decompositions $P_j = N_j M_j$, $j \in \cb{0,\alpha,\beta}$, given by
\ba
N_0 &= \cbm{\bp 1&n_1&n_2&n_3\\&1&n_4&n_5\\&&1\\&&-n_1&1\ep}{n_3=n_1n_5+n_4},\\
M_0 &= \cbm{\diag(y_1, y_2, y_1^{-1}, y_2^{-1}) \in G}{y_1,y_2\in\R^\times},\\
N_\alpha &= \cbm{\bp I_2 & S\\&I_2\ep}{S^\top = S},\\
M_\alpha &= \cbm{\bp A\\&(A^{-1})^\top\ep}{A \in \GL_2(\R)},\\
N_\beta &= \cb{\bp 1&n_1&n_2&n_3\\&1&n_3\\&&1\\&&-n_1&1\ep},\\
M_\beta &= \cbm{\bp y_1\\&a&&b\\&&y_1^{-1}\\&c&&d\ep}{y_1\in\R^\times, \bp a&b\\c&d\ep \in \SL_2(\R)}.
\ea

Let $K$ be the standard maximal compact subgroup of $G$, given by
\ba
K = \cbm{\bp A&B\\-B&A\ep}{A+Bi \in U(2)}.
\ea

By the standard Iwasawa decomposition, elements in $G/K$ can be represented by matrices of the form
\begin{align}\label{eq:g_iwasawa} 
g = \bp 1&n_1&n_2&n_3\\&1&n_4&n_5\\&&1\\&&-n_1&1\ep \bp y_1\\&y_2\\&&y_1^{-1}\\&&&y_2^{-1}\ep,
\end{align}
with $n_3=n_1n_5+n_4$. We may also assume that $y_1, y_2$ are positive. Throughout, we shall fix a Haar measure on $G$. The Haar measure on $K$ will be normalised so that the volume of $K$ is $1$. If $P=NM$ is a parabolic subgroup, then the Haar measure on $P$ is normalised so that it is compatible with the decomposition $G = PK$.

\section{Coset representatives}\label{section:coset_rep}

In this section, we compute the coset representatives of $P_j\cap\Gamma\bs\Gamma$, $j\in\cb{0,\alpha,\beta}$, and compute an explicit Bruhat decomposition, which will be used later on.

\subsection{Plücker coordinates}
Let $G=\Sp(4,\R)$ and $\Gamma = \Sp(4,\Z)$. Let $P_0$ be the standard minimal parabolic subgroup of $G$. We denote by $U_0 = U\sbe P_0$ the unipotent matrices, and $\Gamma_0  = U \cap \Gamma$. We also define
\ba
U_\alpha &:= \cbm{\bp X&Y\\ & (X^{-1})^\top\ep\in G}{X\in\SL(2,\R)}, & \quad \Gamma_\alpha &:= U_\alpha \cap \Gamma,\\
U_\beta &:= \cb{\bp 1&*&*&*\\&*&*&*\\&&1\\&*&*&*\ep \in G}, & \Gamma_\beta &:= U_\beta \cap \Gamma.
\ea

Let
\ba
g = \bp g_{11} & g_{12} & g_{13} & g_{14}\\ g_{21} & g_{22} & g_{23} & g_{24}\\ g_{31} & g_{32} & g_{33} & g_{34}\\ g_{41} & g_{42} & g_{43} & g_{44}\ep \in G.
\ea
We define the following quantities, known as the \emph{Plücker coordinates}, associated to $g\in G$:
\ba
v_i&= g_{3i}, & \quad 1&\le i \le 4,\\
v_{ij} &= g_{3i}g_{4j} - g_{3j}g_{4i}, & \quad 1&\le i < j \le 4.
\ea
It is easy to verify that these quantities are invariant under left action by $\Gamma_0$. The following relations come immediately from the definition:
\begin{equation}\label{eq:Plucker4} 
\begin{aligned}
v_1v_{23}-v_2v_{13}+v_3v_{12} &= 0, & v_1v_{24}-v_2v_{14}+v_4v_{12} &= 0,\\
v_1v_{34}-v_3v_{14}+v_4v_{13} &= 0, & v_2v_{34}-v_3v_{24}+v_4v_{23} &= 0.
\end{aligned}
\end{equation}
Moreover, $G$ being symplectic implies
\begin{equation}\label{eq:Plucker_symp} 
v_{13} + v_{24} = 0.
\end{equation}
Define
\begin{align}\label{V0_def} 
V_0 &= \cbm{v = (v_1, \cdots, v_{34}) \in \R^{10}}{v \text{ satisfies \eqref{eq:Plucker4} and \eqref{eq:Plucker_symp}}},\\
V_\alpha &= \cbm{v = (v_{12}, \cdots, v_{34})\in\R^6}{\begin{array}{l} v_{12}v_{34} - v_{24}v_{13} + v_{14}v_{23} = 0,\\ v_{13}+v_{24}=0.\end{array}},\label{Va_def}\\ 
V_\beta &= \cb{v = (v_1, v_2, v_3, v_4) \in \R^4}. \label{Vb_def} 
\end{align}

From \cite{BFH1990}, we immediately have the follow results.
\begin{prp}
\ben
\item The Plücker coordinates give bijections
\ba
U_0\bs G &\overset\sim\to V_0\bs\cb{0}, \quad & U_\alpha\bs G &\overset\sim\to V_\alpha \bs\cb{0}, \quad & U_\beta\bs G &\overset\sim\to V_\beta \bs\cb{0}.
\ea
\item An orbit of $U_0\bs G$ contains an element of $\Gamma$ if and only if its corresponding Plücker coordinates are such that $(v_1, \cdots, v_4)$ are coprime integers, and $(v_{12}, \cdots v_{34})$ are coprime integers.
\item An orbit of $U_\alpha\bs G$ contains an element of $\Gamma$ if and only if its corresponding Plücker coordinates $(v_{12}, \cdots, v_{34})$ are coprime integers.
\item An orbit of $U_\beta\bs G$ contains an element of $\Gamma$ if and only if its corresponding Plücker coordinates $(v_1, \cdots, v_4)$ are coprime integers.
\ee
\end{prp}

\subsection{Bruhat decomposition}\label{section:Bruhat}
Bruhat decomposition of $G$ is
\ba
G = \coprod\limits_{w\in W} G_w := \coprod\limits_{w\in W} UwTU.
\ea
Hence a coset $\gamma \in U\bs G$ can be represented by a matrix in $wTU = wP_0$ for some $w\in W$; such a Weyl element is unique, and depends on the corresponding Plücker coordinates of the coset. 

We define an equivalence of Plücker coordinates 
\ba
(v_1, v_2, v_3, v_4; v_{12}, v_{13}, v_{14}, v_{23}, v_{24}, v_{34})
\ea
by
\ba
(v_1, \cdots, v_4; v_{12}, \cdots, v_{34}) \sim (k_1 v_1, \cdots, k_1 v_4; k_2 v_{12}, \cdots, k_2 v_{34}) \text{ for } k_1, k_2 \in \R^\times.
\ea

Now we give representatives of $\gamma \in U\bs G$ with the corresponding Plücker coordinates $v$, classified by the Weyl element $w\in W$:

\ben
\item $w = \id$: This says $v \sim (0,0,1,0; 0,0,0,0,0,1)$. In this case, the matrix
\ba
\bp 1/v_3\\ & v_3/v_{34}\\ && v_3\\ &&& v_{34}/v_3\ep = \bp 1\\ &1\\ &&1\\ &&&1\ep  \bp 1/v_3\\ & v_3/v_{34}\\ && v_3\\ &&& v_{34}/v_3\ep
\ea
has the given invariants.

\item $w = s_\alpha$: This says $v \sim (0,0,*,1; 0,0,0,0,0,1)$. Then the matrix
\ba
\bp &1/v_4\\-v_4/v_{34}&v_3/v_{34}\\&&v_3&v_4\\&&-v_{34}/v_4\ep = \bp &1\\1\\&&&1\\&&1\ep \bp -v_4/v_{34} & v_3/v_{34}\\ & 1/v_4\\ && -v_{34}/v_4\\ && v_3 & v_4\ep
\ea
has the given invariants.

\item $w = s_\beta$: This says $v\sim(0,0,1,0; 0,0,0,1,0,*)$. Then the matrix
\ba
\bp 1/v_3\\&&&v_3/v_{23}\\&&v_3\\&-v_{23}/v_3&&v_{34}/v_3\ep = \bp 1\\ &&&1\\ &&1\\ &-1\ep \bp 1/v_3\\ & v_{23}/v_3 && -v_{34}/v_3\\ &&v_3\\ &&& v_3/v_{23}\ep
\ea
has the given invariants.

\item $w = s_\alpha s_\beta$: This says $v \sim (0,1,*,*; 0,0,0,1,0,*)$. Then the matrix
\ba
\bp &&&-1/v_2\\v_2/v_{23}&&&v_3/v_{23}\\&v_2&v_3&v_4\\&&v_{23}/v_2\ep = \bp &&&1\\ 1\\ &-1\\ &&1\ep \bp v_2/v_{23} &&& v_3/v_{23}\\ & -v_2 & -v_3 & -v_4\\ && v_{23}/v_2\\ &&& -1/v_2\ep
\ea
has the given invariants.

\item $w = s_\beta s_\alpha$: This says $v \sim (0,0,*,1; 0,*,1,*,*,*)$. Then the matrix
\ba
\bp &1/v_4\\&&v_4/v_{14}\\&&v_3&v_4\\-v_{14}/v_4&-v_{24}/v_4&-v_{34}/v_4\ep = \bp & 1\\ && 1\\ &&&1\\ -1\ep \bp v_{14}/v_4 & v_{24}/v_4 & v_{34}/v_4\\ & 1/v_4\\ && v_4/v_{14}\\ && v_3 & v_4 \ep
\ea
has the given invariants.

\item $w = s_\alpha s_\beta s_\alpha$: This says $v \sim (1,*,*,*; 0,*,1,*,*,*)$. Then the matrix
\ba
\bp &&-1/v_1\\&v_1/v_{14}&v_4/v_{14}\\v_1&v_2&v_3&v_4\\&&v_{13}/v_1&v_{14}/v_1\ep = \bp &&1\\&1\\-1\\&&&1\ep \bp -v_1 & -v_2 & -v_3 & -v_4\\ & v_1/v_{14} & v_4/v_{14}\\ && -1/v_1\\ && v_{13}/v_1 & v_{14}/v_1\ep
\ea
has the given invariants.

\item $w = s_\beta s_\alpha s_\beta$: This says $v \sim (0,1,*,*; 1,*,*,*,*,*)$. Then the matrix
\ba
\bp &&&-1/v_2\\&&v_2/v_{12}\\&v_2&v_3&v_4\\-v_{12}/v_2&&v_{23}/v_2&v_{24}/v_2\ep = \bp &&&1\\&&1\\&-1\\-1\ep \bp v_{12}/v_2 && -v_{23}/v_2 & -v_{24}/v_2\\ & -v_2 & -v_3 & -v_4\\ && v_2/v_{12}\\ &&& -1/v_2\ep
\ea
has the given invariants.

\item $w = w_0$: This says $v\sim (1,*,*,*; 1,*,*,*,*,*)$. Then the matrix
\ba
\bp &&-1/v_1\\&&v_2/v_{12}&-v_1/v_{12}\\v_1&v_2&v_3&v_4\\&v_{12}/v_1&v_{13}/v_1&v_{14}/v_1\ep = \bp && 1\\ &&& 1\\ -1\\ &-1\ep \bp -v_1& -v_2 & -v_3 & -v_4\\ & -v_{12}/v_1 & -v_{13}/v_1 & -v_{14}/v_1\\ && -1/v_1\\ && v_2/v_{12} & -v_1/v_{12}\ep
\ea
has the given invariants.
\ee

For $w\in W$, let $\Gamma_w = \Gamma_0 \cap w^{-1} \Gamma_0^\top w$. We also let $U_w = U \cap w^{-1} U^\top w$, and $\ol U_w = U \cap w^{-1} U w$. Then clearly we have $U = U_w \ol U_w = \ol U_w U_w$.
\begin{lem}
$\Gamma_w$ acts freely on $(P_0\cap \Gamma) \bs (\Gamma \cap G_w)$ on the right.
\end{lem}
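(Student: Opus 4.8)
The plan is to unwind the meaning of a free right action and reduce freeness to a single positivity statement about root groups. First I would confirm the action is well defined: for $\gamma\in\Gamma\cap G_w$ and $\delta\in\Gamma_w\sbe\Gamma_\infty\sbe N_0$ we have $\gamma\delta\in\Gamma$ and $\gamma\delta\in N_0wDN_0\cdot N_0 = G_w$, so right multiplication preserves $\Gamma\cap G_w$; it obviously descends to the left-coset space $(P_0\cap\Gamma)\bs(\Gamma\cap G_w)$ and is a right action. Freeness then says: if $(P_0\cap\Gamma)\gamma\delta = (P_0\cap\Gamma)\gamma$ for some $\gamma\in\Gamma\cap G_w$ and $\delta\in\Gamma_w$, then $\delta = 1$. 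Rewriting the coset equality, the hypothesis is exactly that $\gamma\delta\gamma^{-1}\in P_0\cap\Gamma$, so it suffices to prove that $\gamma\delta\gamma^{-1}\in P_0$ forces $\delta = 1$.

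Next I would put $\gamma$ into Bruhat normal form relative to $w$. Starting from $G_w = P_0\dot wN_0$ (the diagonal $D$ is absorbed since it normalises $N_0$ and is $w$-stable) and the factorisation $N_0 = \ol N_wN_w$, together with the inclusion $\dot w\ol N_w\dot w^{-1} = (\dot wN_0\dot w^{-1})\cap N_0\sbe N_0$ (the roots in $\ol N_w$ stay positive under $w$), the $\ol N_w$-factor can be absorbed on the left of $\dot w$, giving $G_w = P_0\dot wN_w$. Thus $\gamma = p\dot w\mu$ with $p\in P_0$ and $\mu\in N_w$ (existence is all I need, not uniqueness). Since $p\in P_0$, the condition $\gamma\delta\gamma^{-1}\in P_0$ is equivalent to $\dot w(\mu\delta\mu^{-1})\dot w^{-1}\in P_0$. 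Here $N_w = N_0\cap w^{-1}\ol N_0w$ is a subgroup and $\delta\in\Gamma_w\sbe N_w$, so $\mu\delta\mu^{-1}\in N_w$.

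The crux is the positivity observation $\dot wN_w\dot w^{-1} = (\dot wN_0\dot w^{-1})\cap\ol N_0\sbe\ol N_0$, which is immediate from the definition of $N_w$ as the positive root directions that $w$ sends to negative ones. Hence $\dot w(\mu\delta\mu^{-1})\dot w^{-1}\in\ol N_0$, and since it also lies in $P_0$ it lies in $\ol N_0\cap P_0 = \{1\}$ — the standard fact that the opposite unipotent meets the Borel trivially, which one reads off directly from the explicit shapes of $\ol N_0$ and $P_0$ given above (the only below-diagonal entry allowed in $P_0$ is the $(4,3)$ slot, which is never populated by $\ol N_0$). Therefore $\mu\delta\mu^{-1} = 1$, so $\delta = 1$, which is freeness. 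Note the argument lives entirely in the real group, so no integrality of $p$ or $\mu$ is required; only $\delta\in\Gamma_w$ enters, and $\delta = 1$ is exactly the desired conclusion.

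I expect the main obstacle to be the bookkeeping that makes the normal form and the positivity step rigorous with the sign conventions fixed earlier for $s_\alpha,s_\beta$: namely confirming $\Gamma_\infty = N_w\ol N_w = \ol N_wN_w$, the inclusions $\dot w\ol N_w\dot w^{-1}\sbe N_0$ and $\dot wN_w\dot w^{-1}\sbe\ol N_0$, and $\Gamma_w\sbe N_w$. Once these are in place the conclusion is forced. As a safeguard (or an alternative if one prefers to avoid the abstract normal form), each of the eight cases can be checked by hand using the explicit Bruhat representatives listed above: conjugate the small group $\Gamma_w$ by the given representative and observe that a nontrivial element is pushed into $\ol N_0$, hence out of $P_0$. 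This is mechanical but lengthier.
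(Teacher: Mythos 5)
Your proof is correct and follows essentially the same route as the paper's: both put the Bruhat representative in the normal form $p\,\dot w\,\mu$ with $\mu\in N_w$, observe that the stabiliser condition conjugates $\delta$ into $N_w$ and then, via $\dot w N_w\dot w^{-1}\sbe \ol N_0$, into the opposite unipotent intersected with $P_0$, which is trivial. The paper phrases the final step as $b_2\gamma b_2^{-1}\in N_w\cap\ol N_w=\cb{I}$, which is the same fact before conjugation by $w$; your write-up just makes the intermediate reductions more explicit.
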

\begin{proof}
See \cite[Lemma 1.2]{Friedberg1987}.
\end{proof}

Unfolding the coprimality conditions $(v_1,\ldots,v_4) = 1$ and $(v_{12},\ldots,v_{34}) = 1$ of the Plücker coordinates, we obtain for $w\in W$ a complete set of coset representatives $R_w$ for the quotient $(P_0\cap \Gamma) \bs (\Gamma \cap G_w) / \Gamma_w$, which are useful for later computations. Since the Plücker coordinates determine the cosets uniquely, it suffices to express $R_w$ in terms of Plücker coordinates:
\ben
\item $w=\id$: We have
\ba
R_{\id} = \cb{(0,0,1,0;0,0,0,0,0,1)}.
\ea

\item $w=s_\alpha$: We have
\begin{equation}\label{eq:a_R}
R_{s_\alpha} = \cb{(0,0,v_3,v_4;0,0,0,0,0,1)},
\end{equation}
where $v_4\geq 1$ and $v_3\pmod{v_4}$ are such that $(v_3, v_4)=1$.

\item $w=s_\beta$: We have
\begin{equation}\label{eq:b_R}
R_{s_\beta} = \cb{(0,0,1,0;0,0,0,v_{23},0,v_{34})},
\end{equation}
where $v_{23}\geq 1$ and $v_{34}\pmod{v_{23}}$ are such that $(v_{23}, v_{34}) = 1$.

\item $w=s_\alpha s_\beta$. We have
\begin{equation}\label{eq:ab_R}
R_{s_\alpha s_\beta} = \cb{\rb{0,v_2, v_3, v_4; 0,0,0,\frac{v_2}{d},0,-\frac{v_4}{d}}},
\end{equation}
where $v_2 \geq 1$, $v_3$ and $v_4\pmod{v_2}$ are such that $(v_2, v_3, v_4)=1$, and $d=(v_2,v_4)$.

\item $w=s_\beta s_\alpha$. We have
\begin{equation}\label{eq:ba_R}
R_{s_\beta s_\alpha} = \cb{\rb{0,0,-\frac{v_{24}}{d},\frac{v_{14}}{d};0,-v_{24},v_{14},-\frac{v_{24}^2}{v_{14}},v_{24},v_{34}}},
\end{equation}
where $v_{14}\geq 1$, $d=(v_{14}, v_{24})$ and $v_{24}, v_{34}\pmod{v_{14}}$ are such that $v_{14}\mid d^2$ and $(d^2/v_{14}, v_{34}) = 1$. 

\item $w=s_\alpha s_\beta s_\alpha$. We have
\begin{equation}\label{eq:aba_R}
R_{s_\alpha s_\beta s_\alpha} = \cb{\rb{v_1,v_2,v_3,v_4; 0, -\frac{v_1v_2}{d\delta}, \frac{v_1^2}{d\delta}, -\frac{v_2^2}{d\delta}, \frac{v_1v_2}{d\delta}, \frac{v_1v_3+v_2v_4}{d\delta}}}
\end{equation}
where $v_1 \geq 1$ and $v_2, v_3, v_4\pmod{v_1}$ are such that $(v_1,v_2,v_3,v_4)=1$, and $d = (v_1, v_2)$, $\delta = (d, v'_1v_3+v'_2v_4)$. 

\item $w=s_\beta s_\alpha s_\beta$. We have
\begin{equation}\label{eq:bab_R}
R_{s_\beta s_\alpha s_\beta} = \cb{\rb{0,\frac{v_{12}}{d_0},\frac{v_{13}}{d_0},\frac{v_{14}}{d_0}; v_{12}, v_{13}, v_{14}, v_{23}, -v_{13}, -\frac{v_{13}^2+v_{14}v_{23}}{v_{12}}}},
\end{equation}
where $v_{12}\geq 1$ and $v_{13}, v_{14} \pmod{v_{12}}$ are such that $d_1\mid v_{13}^2$, $d_1 = (v_{12}, v_{14})$. Let $v_{12} = d_1 v'_{12}$, $v_{14} = d_1v'_{14}$, $v_{13}^2 = d_1k$, $d_0 = (v_{12}, v_{13}, v_{14})$, $d_1 = d_0 d_q$, $d_0 = d_q t$. Let $a$ be a solution to $av'_{14}\equiv -k\pmod{v'_{12}}$ such that $a$ and $(av'_{14}+k)/v'_{12}$ are both divisible by $t$. Then $v_{23}\pmod{v_{12}}$ is chosen so that $v_{23} = a + rv'_{12}$ with $(r,t)=1$.

\item $w = w_0$. Representatives have the form
\begin{align}\label{eq:w0_R}
R_{w_0} = \cb{\rb{v_1, v_2, v_3, v_4; v_{12}, v_{13}, v_{14}, \frac{v_2v_{13}-v_3v_{12}}{v_1}, -v_{13}, \frac{v_3v_{14}-v_4v_{13}}{v_1}}}, 
\end{align}
where $v_1, v_{12}\geq 1$, and $v_2, v_3, v_4\pmod{v_1}$, $v_{13}, v_{14}\pmod{v_{12}}$ are such that $v_1v_{13}+v_2v_{14}-v_4v_{12}=0$, $v_1\mid v_2v_{13}-v_3v_{12}$, $v_1\mid v_3v_{14}-v_4v_{13}$, and
\ba
 (v_1, v_2, v_3, v_4) = 1, \quad \rb{v_{12}, v_{13}, v_{14}, \frac{v_2v_{13}-v_3v_{12}}{v_1}, \frac{v_3v_{14}-v_4v_{13}}{v_1}} = 1.
\ea
\ee

\subsection{Eisenstein series}

We end the section with alternative expressions for Eisenstein series, using Plücker coordinates. The following theorem says that $E_\alpha(g, \nu, 1)$ and $E_\beta(g,\nu,1)$ can be considered as Epstein zeta functions, and $E_0(g,\nu)$ can be considered as a height zeta function associated with a bi-projective quadratic variety.
\begin{thm}\label{Eisenstein_alt}
We have
\ba
E_0(g,\nu) &= \frac{1}{4} \sum\limits_{v\in V_0(\Z) \text{\rm{ primitive}}} (v_\beta^\top gg^\top v_\beta)^{-\nu_1/2-1} (v_\alpha^\top(g\wedge g)(g\wedge g)^\top v_\alpha)^{\nu_1/2-\nu_2-1/2},\\
E_\alpha (g, \nu, 1) &= \frac{1}{2}\sum\limits_{v_\alpha \in V_\alpha(\Z) \text{\rm{ primitive}}} (v_\alpha^\top (g\wedge g)(g\wedge g)^\top v_\alpha)^{-\nu/2-3/4},\\
E_\beta (g, \nu, 1) &= \frac{1}{2}\sum\limits_{v_\beta \in V_\beta(\Z) \text{\rm{ primitive}}} (v_\beta^\top gg^\top v_\beta)^{-\nu/2-1}.
\ea
where $v_\beta = (v_1, v_2, v_3, v_4)^\top$, $v_\alpha = (v_{12}, v_{13}, v_{14}, v_{23} ,v_{24}, v_{34})^\top$, and $V_0, V_\alpha, V_\beta$ are defined by \eqref{V0_def}, \eqref{Va_def}, \eqref{Vb_def} respectively, and $g \wedge g$ is the exterior square of a matrix $g$, given by
\ba
g\wedge g = \bp g_{ij, kl} \ep_{\substack{1\leq i<j\leq 4\\ 1\leq k<l \leq 4}}, \quad \text{ where } \quad g_{ij,kl} = g_{ik}g_{jl}-g_{il}g_{jk}.
\ea
\end{thm}
\begin{proof}
We first prove the statement for $E_0(g,\nu)$. We start from the expression 
\ba
E_0(g,\nu) = \sum\limits_{\gamma \in (P_0\cap \Gamma) \bs \Gamma} I_0(\gamma g,\nu) = \frac 14 \sum\limits_{v\in V_0(\Z) \text{\rm{ primitive}}} I_0(\gamma g,\nu),
\ea
where $I_0(g,\nu) = y_1^{\nu_1+2} y_2^{2\nu_2-\nu_1+1}$. Suppose $\gamma\in\Gamma$ has Plücker coordinates 
\ba
v = (v_1, \cdots, v_4; v_{12}, \cdots, v_{34}).
\ea
Then it suffices to prove that
\ba
I_0 (\gamma g, \nu) = (v_\alpha^\top (g\wedge g)(g\wedge g)^\top v_\alpha)^{\nu_1/2-\nu_2-1/2} (v_\beta^\top g g^\top v_\beta)^{\nu_2-\nu_1-1/2}.
\ea
Let $\gamma g = nak$ be the Iwasawa decomposition of $\gamma g$ with $n \in U$, $a \in T^+$, and $k\in K$. If we write
\ba
a = \diag(a_1, a_2, a_1^{-1}, a_2^{-1}) \in T^+,
\ea
then $I_0(\gamma g,\nu) = a_1^{\nu_1+2} a_2^{2\nu_2-\nu_1+1}$. So it suffices to find expressions for $a_1$ and $a_2$ in terms of the Plücker coordinates of $\gamma$. Suppose $\gamma g$ has the form
\ba
\gamma g = \bp *&*&*&*\\ *&*&*&*\\b_{31}&b_{32}&b_{33}&b_{34}\\b_{41}&b_{42}&b_{43}&b_{44}\ep = nak. 
\ea
Then $\gamma g (\gamma g)^\top = nak(nak)^\top = na^2 n^\top$. Since $n\in U$ has the form
\ba
n = \bp 1&u&*&*\\&1&*&*\\&&1\\&&-u&1\ep \in U,
\ea
we compute
\ba
na^2n^\top = \bp *&*&*&*\\ *&*&*&*\\ *&*& a_1^{-2} & -ua_1^{-2}\\ *&*&-ua_1^{-2}&u^2a_1^{-2}+a_2^{-2}\ep.
\ea
Evaluating $\gamma g (\gamma g)^\top = \gamma g g^\top \gamma^\top$ yields
\ba
a_1^{-2} &= b_{31}^2+b_{32}^2+b_{33}^2+b_{34}^2,\\
-ua_1^{-2} &= b_{31}b_{41}+b_{32}b_{42}+b_{33}b_{43}+b_{34}b_{44},\\
u^2a_1^{-2}+a_2^{-2} &= b_{41}^2+b_{42}^2+b_{43}^2+b_{44}^2,
\ea 
from which we get
\ba
a_2^{-2} = b_{41}^2+b_{42}^2+b_{43}^2+b_{44}^2 - \frac{(b_{31}b_{41}+b_{32}b_{42}+b_{33}b_{43}+b_{34}b_{44})^2}{b_{31}^2+b_{32}^2+b_{33}^2+b_{34}^2}.
\ea
In particular, we have 
\ba
a_1^{-2} a_2^{-2} &= (b_{31}^2+b_{32}^2+b_{33}^2+b_{34}^2)(b_{41}^2+b_{42}^2+b_{43}^2+b_{44}^2) - (b_{31}b_{41}+b_{32}b_{42}+b_{33}b_{43}+b_{34}b_{44})^2\\
&= \sum\limits_{1\leq i < j\leq 4} (b_{3i}b_{4j}-b_{3j}b_{4i})^2.
\ea
Meanwhile, expanding $\gamma g$, we see that
\ba
\bp b_{31}&b_{32}&b_{33}&b_{34}\ep = v_\beta^\top g.
\ea
Let $g\wedge g$ be the exterior square of $g$. Then we have
\ba
\bp b_{3i}b_{4j}-b_{3j}b_{4i} \ep_{1\leq i<j\leq 4} = v_\alpha^\top (g\wedge g), 
\ea
where we consider $\bp b_{3i}b_{4j}-b_{3j}b_{4i} \ep_{1\leq i<j\leq 4}$ as a row vector. So we can write
\begin{align}\label{eq:a1v_formula}
a_1^{-2} &= v_\beta^\top g g^\top v_\beta,\\
a_1^{-2}a_2^{-2} &= v_\alpha^\top (g\wedge g)(g\wedge g)^\top v_\alpha. \label{eq:a12v_formula}
\end{align}
Hence
\ba
I_0 (\gamma g, \nu) = a_1^{\nu_1+2} a_2^{2\nu_2-\nu_1+1} = \big(v_\alpha^\top (g\wedge g)(g\wedge g)^\top v_\alpha\big)^{\nu_1/2-\nu_2-1/2} (v_\beta^\top g g^\top v_\beta)^{\nu_2-\nu_1-1/2},
\ea
proving the statement for $E_0(g,\nu)$. 

For $E_\alpha(g,\nu,1)$ and $E_\beta(g,\nu,1)$, we have
\ba
E_\alpha(g,\nu,1) &= \frac 12 \sum\limits_{v_\alpha \in V_\alpha(\Z) \text{\rm{ primitive}}} I_\alpha (\gamma g, \nu), & \quad E_\beta(g,\nu,1) &= \sum\limits_{v_\beta \in V_\beta(\Z) \text{\rm{ primitive}}} I_\beta (\gamma g, \nu),
\ea
where $I_\alpha (g, \nu) = (y_1y_2)^{\nu+3/2}$, and $I_\beta (g,\nu) = y_1^{\nu+2}$. Then the statements follow from expressions \eqref{eq:a1v_formula} and \eqref{eq:a12v_formula}, using the same argument.
\end{proof}

\section{Constant terms} \label{section:const_terms}
Let $E_P(g,\nu,f)$ be an Eisenstein series for a standard parabolic $P$. Let $P' = N'M'$ be another standard parabolic subgroup. The \emph{constant term} of $E_P(g,\nu,f)$ along the parabolic $P'$ is defined as
\ba
C_P^{P'}(g,\nu,f) := \int_{N'(\Z)\bs N'(\R)} E_P(\eta g,\nu,f) d\eta,
\ea
where $N'(\Z) = \Gamma \cap N'(\R)$. When $P'=P$, the superscript $P'$ is omitted from the notation.

To compute the constant terms, we also make use of intertwining operators, defined in adelic settings. We follow the setup in \cite{MW1995}. Let $\A$ be the ring of adeles of $\Q$. Let $\pi$ be an irreducible automorphic representation of $M$, and $\phi_\pi$ be an element in $A(N(\A)M(\Q)\bs G(\A))_\pi$, the $\pi$-isotypic part of the space of automorphic forms on $N(\A)M(\Q)\bs G(\A)$ (see \cite[I.2.17]{MW1995}). The Eisenstein series associated to $\phi_\pi$ is then defined to be
\ba
E(\phi_\pi, \pi)(g) := \sum\limits_{\gamma \in P(\Q)\bs G(\Q)} \phi_\pi(\gamma g)
\ea
as a function on $G(\Q)\bs G(\A)$, whenever the series converges.

Now let $w \in G(\Q)$ be such that $wMw^{-1} = M'$. For $g\in G(\A)$, we set
\ba
\mc M(w,\pi)\phi_\pi (g) := \int_{(N'(\Q)\cap w N(\Q) w^{-1})\bs N'(\A)} \phi_\pi(w^{-1}\eta g) d\eta
\ea
whenever the integral is convergent. This defines an intertwining operator 
\ba
\mc M(w,\pi): A(N(\A)M(\Q)\bs G(\A))_\pi \to A(N'(\A)M'(\Q)\bs G(\A))_{w\pi}.
\ea 

Now we are able to state the functional equation of Langlands.
\begin{thm}[Langlands \cite{Langlands1976}] In the setting above,
\ba
\mc M(w', w\pi) \circ \mc M(w,\pi) = \mc M(w'w, \pi).
\ea
\end{thm}

We give a correspondence between adelic and classical definitions of Eisenstein series, in the case $G=\Sp(4)$ and $k = \Q$. We have the strong approximation $g = \delta g_\infty k_0$ for all $g\in G(\A)$, with $\delta \in G(\Q)$, $g_\infty \in G(\R)$, and $k_0\in K$, the maximal compact subgroup of $G(\A)$. 

Let $P_0$ be the minimal parabolic subgroup of $\Sp(4)$ with Levi component $M_0$. For $\nu\in\C^2$, let $\pi_\nu$ be a character on $M_0(\A)$ defined by 
\ba
\pi_\nu(\diag(y_1, y_2, y_1^{-1}, y_2^{-1})) := |y_1|^{\nu_1+2} |y_2|^{2\nu_2-\nu_1+1}.
\ea
Based on the Iwasawa decomposition \eqref{eq:g_iwasawa}, we define 
\ba
\phi_\nu(g) := |y_1|^{\nu_1+2} |y_2|^{2\nu_2-\nu_1+1}.
\ea
Then $\phi_\nu$ is right $K$-invariant, and lies in $A(N_0(\A)M_0(\Q)\bs G(\A))_{\pi_\nu}$. It is then easy to check the following.

\begin{prp}
In the setup above,
\ba
E(\phi_\nu, \pi_\nu)(g) = E_0(g_\infty, \nu).
\ea
\end{prp}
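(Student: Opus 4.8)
The plan is to pass from the adelic sum over $P_0(\Q)\bs G(\Q)$ to the classical sum over $(P_0\cap\Gamma)\bs\Gamma$ by exploiting strong approximation, and then to check that the adelic section $\phi_\nu$ restricts to the classical section $I_0$ on the relevant representatives. I would organise the argument in three steps: (1) use the invariance properties of $E(\phi_\nu,\nu)$ to reduce the evaluation to an archimedean point $\iota(g_\infty)\in G(\A)$ with trivial finite part; (2) identify the index sets $P_0(\Q)\bs G(\Q)$ and $(P_0\cap\Gamma)\bs\Gamma$; and (3) match the individual summands. Throughout, one works in the range of absolute convergence, the general case following by the meromorphic continuation already invoked.

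For step (1), recall that $\Sp_4$ is simply connected and semisimple with $\Sp_4(\R)$ noncompact, so strong approximation gives the decomposition $g = \delta g_\infty k_0$ with $\delta\in G(\Q)$, $g_\infty\in G(\R)$ and $k_0\in K$, where $K = K_\infty\times K_f$ and $K_f = \prod_p \Sp_4(\Z_p)$. I would observe that $\phi_\nu$ is right $K$-invariant by construction, since it depends only on the $M_0(\A)$-part of the Iwasawa decomposition; hence $E(\phi_\nu,\nu)$ is right $K$-invariant, and it is left $G(\Q)$-invariant by reindexing the summation over $P_0(\Q)\bs G(\Q)$. Therefore $E(\phi_\nu,\nu)(g) = E(\phi_\nu,\nu)(\iota(g_\infty))$, where $\iota$ embeds $G(\R)$ with trivial components at the finite places, and it remains to compute the latter.

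For step (2), I would use the map
\ba
(P_0\cap\Gamma)\bs\Gamma \;\longrightarrow\; P_0(\Q)\bs G(\Q), \qquad (P_0\cap\Gamma)\gamma \mapsto P_0(\Q)\gamma,
\ea
induced by the inclusion $\Gamma\hra G(\Q)$. It is well defined and injective because $P_0(\Q)\cap\Gamma = P_0\cap\Gamma$, and its surjectivity is exactly the transitivity statement $G(\Q) = P_0(\Q)\,\Gamma$. This last point is precisely what the Pl\"ucker analysis of \Cref{Pluecker0a,Pluecker0b} delivers: a point of $P_0(\Q)\bs G(\Q)$ is determined by its Pl\"ucker vector up to the scaling $M_0(\Q)$-action, such a vector may be normalised to be primitive integral, and by \Cref{Pluecker0a,Pluecker0b} any primitive integral vector in $V_0$ is realised by an element of $\Gamma$. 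For step (3), I fix $\gamma\in\Gamma\sbe G(\Q)$ and evaluate $\phi_\nu(\gamma\,\iota(g_\infty))$: the finite components equal $\gamma\in\Sp_4(\Z_p) = K_p$ for every $p$, so the $M_0(\A)$-part of the adelic Iwasawa decomposition is supported at the archimedean place. Writing $\gamma g_\infty = n_\infty\,\diag(y_1,y_2,y_1^{-1},y_2^{-1})\,k_\infty$ with $y_i>0$, the product formula collapses the idelic absolute values to their archimedean values, giving
\ba
\phi_\nu(\gamma\,\iota(g_\infty)) = |y_1|^{\nu_1+2}\,|y_2|^{2\nu_2-\nu_1+1} = y_1^{\nu_1+2}\,y_2^{2\nu_2-\nu_1+1} = I_0(\gamma g_\infty,\nu).
\ea
Summing over the index set and using step (2) yields $E(\phi_\nu,\nu)(\iota(g_\infty)) = \sum_{\gamma\in(P_0\cap\Gamma)\bs\Gamma} I_0(\gamma g_\infty,\nu) = E_0(g_\infty,\nu)$, which together with step (1) is the claim.

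The main obstacle is step (2), and more precisely the transitivity $G(\Q) = P_0(\Q)\,\Gamma$: one must verify that the real Pl\"ucker computations of \Cref{Pluecker0a,Pluecker0b} transfer faithfully to the rational points, being careful that normalising a rational Pl\"ucker vector to a primitive integral one respects the $M_0(\Q)$-scaling ambiguity and preserves the symplectic relations cutting out $V_0$. The archimedean-versus-finite bookkeeping in step (3) and the invariance manipulations in step (1) are routine once strong approximation and the class-number-one property of $\Sp_4$ are in hand.
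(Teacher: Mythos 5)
Your proposal is correct and follows essentially the same route as the paper: reduce to trivial finite part by left $G(\Q)$- and right $K$-invariance, identify $(P_0\cap\Gamma)\bs\Gamma$ with $P_0(\Q)\bs G(\Q)$, and observe that integral representatives lie in $K_p$ at every finite place so that $\phi_\nu$ collapses to its archimedean component $I_0$. The only difference is one of detail: the paper simply asserts the ``natural bijection'' of coset spaces, whereas you justify its surjectivity via the Pl\"ucker analysis of \Cref{Pluecker0a,Pluecker0b} — a worthwhile elaboration, but not a different argument.
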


\subsection{Constant term along \texorpdfstring{$P_0$}{P0}}\label{minmin}

We consider the minimal parabolic Eisenstein series
\ba
E_0 (g, \nu) = \sum\limits_{\gamma\in (P_0 \cap \Gamma) \bs \Gamma} I_0(\gamma g,\nu).
\ea
with $I_0(g,\nu) = y_1^{\nu_1+2} y_2^{2\nu_2-\nu_1+1}$. By definition, the constant term of $E_0(g,\nu)$ along $P_0$ is
\ba
C_0(g,\nu) := \int_{N_0(\Z)\bs N_0(\R)} \sum\limits_{\gamma\in (P_0\cap\Gamma) \bs \Gamma}I_0(\gamma\eta g, \lambda) d\eta.
\ea
It is clear from the definition of the integral that the constant term is invariant under left action by $N_0(\R)$. So we may assume that $g$ is a diagonal matrix $\diag(y_1, y_2, y_1^{-1}, y_2^{-1})$. Write 
\ba
\eta = \bp 1 & n_1 & n_2 & n_3\\  & 1 & n_4 & n_5\\ &&1\\ &&-n_1&1\ep \in N_0(\R),
\ea
with the relation $n_3 = n_4+n_1n_5$. Then the integration becomes
\ba
\int_0^1 \int_0^1 \int_0^1 \int_0^1 \sum\limits_{\gamma\in (P_0\cap\Gamma) \bs \Gamma}I_0(\gamma\eta g, \lambda) dn_1dn_2dn_4dn_5.
\ea

We break down the summation over $(P_0\cap\Gamma) \bs \Gamma$ via the Bruhat decomposition:
\ba
E_0(g,\nu) = \sum\limits_{w\in W} E_{0,w}(g,\nu),
\ea
where
\ba
E_{0, w}(g,\nu) = \sum\limits_{\gamma \in (P_0\cap\Gamma) \bs (\Gamma \cap P_0 w P_0)} I_0(\gamma g, \nu),
\ea
and compute the constant term integrals
\ba
C_{0,w}(g,\nu) = \int_{N_0(\Z)\bs N_0(\R)} E_{0, w}(g, \nu).
\ea

Again, let $\phi_\nu(g) = |y_1|^{\nu_1+2} |y_2|^{2\nu_2-\nu_1+1}$. It is straightforward to verify
\begin{prp}
For $g = (g_\infty, 1, 1, \cdots) \in G(\A)$, we have $\mc M(w,\nu) \phi_\nu (g) = C_{0, w^{-1}}(g_\infty, \nu)$.
\end{prp}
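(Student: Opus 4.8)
The plan is to show that both sides reduce, after unfolding, to the single integral $\int_{N_{w^{-1}}(\A)}\phi_\nu(w^{-1}ng)\,dn$, where I write $N_{w^{-1}} = N_0\cap wN_0^Tw^{-1}$ for the unipotent subgroup complementary to $\ol N_{w^{-1}} = N_0\cap wN_0w^{-1}$ inside $N_0$ (these are the groups $N_w,\ol N_w$ of the Bruhat subsection, with $w$ replaced by $w^{-1}$). I will carry out the adelic and the classical reduction separately and then match the two outcomes.

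On the adelic side I start from the definition
\[
M(w,\nu)\phi_\nu(g) = \int_{(N_0(\Q)\cap wN_0(\Q)w^{-1})\bs N_0(\A)}\phi_\nu(w^{-1}\eta g)\,d\eta,
\]
and note $N_0(\Q)\cap wN_0(\Q)w^{-1} = \ol N_{w^{-1}}(\Q)$. Using the factorisation $N_0 = \ol N_{w^{-1}} N_{w^{-1}}$ (the identity $\Gamma_\infty = \ol N_w N_w$ applied to $w^{-1}$) I write $\eta = \bar\eta n$. For $\bar\eta\in\ol N_{w^{-1}}$ one has $w^{-1}\bar\eta w\in N_0$, so $w^{-1}\bar\eta n g = (w^{-1}\bar\eta w)(w^{-1}ng)$, and the left $N_0(\A)$-invariance of $\phi_\nu$ gives $\phi_\nu(w^{-1}\bar\eta ng) = \phi_\nu(w^{-1}ng)$. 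The $\bar\eta$-integral thus runs over the compact quotient $\ol N_{w^{-1}}(\Q)\bs\ol N_{w^{-1}}(\A)$ and contributes volume $1$, leaving $\int_{N_{w^{-1}}(\A)}\phi_\nu(w^{-1}ng)\,dn$.

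On the classical side I use the bijection between $P_0(\Q)\bs G(\Q)$ and $P_0(\R)\cap\Gamma\bs\Gamma$ established before, which restricts to the Bruhat cell $P_0w^{-1}P_0$; hence $E_{0,w^{-1}}$ is the classical avatar of the partial adelic sum over $\gamma\in P_0(\Q)\bs P_0(\Q)w^{-1}P_0(\Q)$. Unfolding $\int_{N_0(\Z)\bs N_0(\R)}E_{0,w^{-1}}(\eta g_\infty)\,d\eta$ proceeds by Bruhat: the representatives are $\gamma = w^{-1}n$ with $n\in N_{w^{-1}}$, and by the freeness Lemma the group $\Gamma_{w^{-1}} = N_{w^{-1}}(\Z)$ acts freely with representative set $R_{w^{-1}}$. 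Factoring $N_0(\Z)\bs N_0(\R)$ as $(\ol N_{w^{-1}}(\Z)\bs\ol N_{w^{-1}}(\R))\times(N_{w^{-1}}(\Z)\bs N_{w^{-1}}(\R))$, the $\ol N_{w^{-1}}$-directions are again absorbed by the left $N_0$-invariance of $I_0$ and give volume $1$, while the sum over $R_{w^{-1}}$ together with the integral over $N_{w^{-1}}(\Z)\bs N_{w^{-1}}(\R)$ reassembles into the full real integral $\int_{N_{w^{-1}}(\R)}$ weighted by the arithmetic data carried by $R_{w^{-1}}$.

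The main obstacle is precisely this last reassembly: one must verify that the Pl\"ucker coset representatives $R_{w^{-1}}$, together with the $\Gamma_{w^{-1}} = N_{w^{-1}}(\Z)$-translates, reproduce exactly the non-archimedean factor $\int_{N_{w^{-1}}(\A_{\fin})}\phi_\nu(w^{-1}n_{\fin})\,dn_{\fin}$, i.e. that the local integrals at the finite places match the coprimality-constrained sums defining $R_{w^{-1}}$. Some care is also required with the non-abelian structure of $N_0$ (the relation $n_3 = n_1n_5+n_4$) when factoring the Haar measure through $N_0 = \ol N_{w^{-1}}N_{w^{-1}}$, and in checking that the two $\ol N$-integrations are genuinely normalised to $1$. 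Granting these normalisations, the adelic reduction and the classical reduction yield the same integral $\int_{N_{w^{-1}}(\A)}\phi_\nu(w^{-1}ng)\,dn$, and the proposition follows.
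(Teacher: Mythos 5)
Your strategy is genuinely different from the paper's, and as written it has a gap at exactly the point where all the content sits. The paper does not unfold the intertwining integral down to $\int_{N_{w^{-1}}(\A)}\phi_\nu(w^{-1}ng)\,dn$; it goes the other way, \emph{folding} the integral over $\rb{N_0(\Q)\cap wN_0(\Q)w^{-1}}\bs N_0(\A)$ into an integral over the compact quotient $N_0(\Q)\bs N_0(\A)$ of the sum $\sum_u\phi_\nu\rb{w^{-1}u\eta g}$ over $u\in\rb{N_0(\Q)\cap wN_0(\Q)w^{-1}}\bs N_0(\Q)$. Since the coset representatives can be taken in $K_\fin$ (by the earlier correspondence between $E(\phi_\nu,\nu)$ and $E_0$), the finite places drop out, the quotient becomes $N_0(\Z)\bs N_0(\R)$, and the bijection $u_0w^{-1}u\mapsto u$ identifies the inner sum with $E_{0,w^{-1}}(\eta g_\infty,\nu)$; the resulting expression is then the \emph{definition} of $C_{0,w^{-1}}$ restricted to the cell $P_0w^{-1}P_0$. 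No local integral is ever evaluated and no explicit coset representatives $R_{w^{-1}}$ are needed.

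Your route instead reduces the left-hand side to the product of the archimedean integral with $\prod_p\int_{N_{w^{-1}}(\Q_p)}\phi_{\nu,p}(w^{-1}n_p)\,dn_p$, and the right-hand side to the sum over $R_{w^{-1}}$ of the arithmetic weights $\Psi_{w^{-1}}(D,\nu)$ times the same archimedean integral. The remaining identity --- that the Dirichlet series in the Pl\"ucker data attached to $R_{w^{-1}}$ equals the product of the finite local intertwining integrals (a Gindikin--Karpelevich type computation producing the ratios of zeta values) --- is precisely the nontrivial arithmetic input, and you flag it as ``the main obstacle'' without proving it. Nothing established earlier in the paper supplies this matching, and it is not a mere normalisation of measures: it is the entire content of the constant-term formula. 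Indeed the purpose of this proposition in the paper is to \emph{avoid} having to prove such a matching cell by cell, by transporting the functional equation $M(w',w\pi)\circ M(w,\pi)=M(w'w,\pi)$ to the classical constant terms so that only the $s_\alpha$ and $s_\beta$ cells need explicit evaluation. So the proposal is not a complete proof; to close it you would either have to carry out the local computations for all eight Weyl elements (defeating the point of the proposition), or switch to the folding argument.
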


This functional equation reduces the calculation of the constant terms to the cases of $w = \id, s_\alpha, s_\beta$.

For $\gamma \in (P_0\cap\Gamma) \bs (\Gamma \cap P_0 w P_0)$, let
\ba
\gamma \eta g \equiv \bp 1 & n'_1 & n'_2 & n'_3\\&1&n'_4&n'_5\\&&1\\&&-n'_1&1\ep \bp y'_1\\&y'_2\\&&{y'_1}^{-1}\\&&&{y'_2}^{-1}\ep \pmod{K}.
\ea
Using the explicit Bruhat decomposition in \Cref{section:Bruhat}, we may express $E_{0,w}(g,\nu)$ using Plücker coordinates:
\ben
\item For $w=\id$, we have
\ba
E_{0,\id}(g,\nu) = {y'_1}^{\nu_1+2} {y'_2}^{2\nu_2-\nu_1+1},
\ea
where $y'_1=y_1$ and $y'_2=y_2$.
\item For $w=s_\alpha$, we have
\ba
E_{0,s_\alpha}(g,\nu) = \sum_{v_4\ge 1} \sum_{(v_3,v_4)=1} {y'_1}^{\nu_1+2} {y'_2}^{2\nu_2-\nu_1+1},
\ea
where
\ba
y'_1 = \frac{y_1y_2}{v_4\sqrt{s_1^2y_2^2+y_1^2}} \quad \text{ and } \quad y'_2 = v_4 \sqrt{s_1^2y_2^2+y_1^2},
\ea
with $s_1 = n_1- v_3/v_4$.
\item For $w=s_\beta$, we have
\ba
E_{0,s_\beta}(g,\nu) = \sum_{v_{23}\ge 1} \sum_{(v_{23},v_{34})=1} {y'_1}^{\nu_1+2} {y'_2}^{2\nu_2-\nu_1+1},
\ea
where
\ba
y'_1 = y_1 \quad \text{ and } \quad y'_2 = \frac{y_2}{v_{23}\sqrt{y_2^4+s_5^2}},
\ea
with $s_5 = n_5 - v_{34}/v_{23}$.
\ee

It is clear that $C_{0,\id} (g,\nu) = y_1^{\nu_1+2} y_2^{2\nu_2-\nu_1+1}$. Now we compute
\begin{equation*}
C_{0,s_\alpha} (g,\nu) = \int_0^1 \sum\limits_{v_4\geq 1} \sum\limits_{(v_3, v_4)=1} y_1^{\nu_1+2} y_2^{\nu_1+2} v_4^{2\nu_2-2\nu_1-1} (s_1^2y_2^2+y_1^2)^{\nu_2-\nu_1-1/2} dn_1.
\end{equation*}
Summing over $v_3$ gives an integral over $\R$:
\begin{equation*}
C_{0,s_\alpha} (g,\nu) = y_1^{\nu_1+2} y_2^{\nu_1+2} \sum\limits_{v_4\geq 1} \varphi(v_4) v_4^{2\nu_2-2\nu_1-1} \int_\R (s_1^2y_2^2+y_1^2)^{\nu_2-\nu_1-1/2} dn_1,
\end{equation*}
where $\varphi$ denotes Euler's totient function. Via \cite[Formula 3.251.2]{GR2007}, the integral evaluates to
\begin{equation*}
C_{0,s_\alpha} (g,\nu) = y_1^{2\nu_2-\nu_1+2} y_2^{\nu_1+1} \sum\limits_{v_4\geq 1} \varphi(v_4) v_4^{2\nu_2-2\nu_1-1} B\rb{\frac{1}{2}, \nu_1-\nu_2},
\end{equation*}
where
\begin{equation}\label{eq:beta}
B(x,y) := \int_0^1 t^{x-1} (1-t)^{y-1} dt = \frac{\Gamma(x)\Gamma(y)}{\Gamma(x+y)}
\end{equation}
denotes the beta function. Hence we compute
\begin{align*}
C_{0,s_\alpha} (g,\nu) &= y_1^{2\nu_2-\nu_1+2} y_2^{\nu_1+1} \frac{\zeta(2\nu_1-2\nu_2)}{\zeta(2\nu_1-2\nu_2+1)} B\rb{\frac{1}{2}, \nu_1-\nu_2}\\
&= y_1^{2\nu_2-\nu_1+2} y_2^{\nu_1+1} \frac{\Lambda(2\nu_1-2\nu_2)}{\Lambda(2\nu_1-2\nu_2+1)},
\end{align*}
where $\Lambda(s) = \pi^{-s/2} \Gamma(s/2) \zeta(s)$ is the completed zeta function.

Analogously,
\ba
C_{0,s_\beta} (g,\nu) = y_1^{\nu_1+2} y_2^{\nu_1-2\nu_2+1} \frac{\Lambda(2\nu_2-\nu_1)}{\Lambda(2\nu_2-\nu_1+1)}.
\ea

In term of interwtining operators, we have
\ba
\mc M(s_\alpha, \nu) I_0(g, \nu) &= \frac{\Lambda(2\nu_1-2\nu_2)}{\Lambda(2\nu_1-2\nu_2+1)} I_0(g, (2\nu_2-\nu_1, \nu_2)),\\
\mc M(s_\beta, \nu) I_0(g,\nu) &= \frac{\Lambda(2\nu_2-\nu_1)}{\Lambda(2\nu_2-\nu_1+1)} I_0(g, (\nu_1, \nu_1-\nu_2)). 
\ea
Through the functional equation, we obtain the constant term for other Weyl elements. This completes the computation of the constant terms. 

\begin{prp}\label{constterm:minmin}
The constant term for the minimal parabolic Eisenstein series along the minimal parabolic subgroup $P_0$ is given by
\ba
C_0 (g,\nu) = \sum\limits_{w \in W} C_{0,w}(g,\nu)
\ea
where
\ba
C_{0, \id} (g,\nu) &=  y_1^{\nu_1+2} y_2^{2\nu_2-\nu_1+1},\\
C_{0, s_\alpha} (g,\nu)  &= \frac{\Lambda(2\nu_1-2\nu_2)}{\Lambda(2\nu_1-2\nu_2+1)} y_1^{2\nu_2-\nu_1+2} y_2^{\nu_1+1},\\
C_{0, s_\beta}  (g,\nu) &= \frac{\Lambda(2\nu_2-\nu_1)}{\Lambda(2\nu_2-\nu_1+1)} y_1^{\nu_1+2} y_2^{\nu_1-2\nu_2+1},\\
C_{0, s_\alpha s_\beta}  (g,\nu) &= \frac{\Lambda(\nu_1)}{\Lambda(\nu_1+1)} \frac{\Lambda(2\nu_1-2\nu_2)}{\Lambda(2\nu_1-2\nu_2+1)} y_1^{2\nu_2-\nu_1+2} y_2^{-\nu_1+1},\\
C_{0, s_\beta s_\alpha}  (g,\nu) &= \frac{\Lambda(2\nu_2)}{\Lambda(2\nu_2+1)}\frac{\Lambda(2\nu_2-\nu_1)}{\Lambda(2\nu_2-\nu_1+1)} y_1^{\nu_1-2\nu_2+2} y_2^{\nu_1+1},\\
C_{0, s_\alpha s_\beta s_\alpha}  (g,\nu) &= \frac{\Lambda(2\nu_2)}{\Lambda(2\nu_2+1)}\frac{\Lambda(\nu_1)}{\Lambda(\nu_1+1)} \frac{\Lambda(2\nu_1-2\nu_2)}{\Lambda(2\nu_1-2\nu_2+1)} y_1^{-\nu_1+2} y_2^{2\nu_2-\nu_1+1},\\
C_{0, s_\beta s_\alpha s_\beta}  (g,\nu) &= \frac{\Lambda(\nu_1)}{\Lambda(\nu_1+1)}\frac{\Lambda(2\nu_2)}{\Lambda(2\nu_2+1)}\frac{\Lambda(2\nu_2-\nu_1)}{\Lambda(2\nu_2-\nu_1+1)} y_1^{\nu_1-2\nu_2+2} y_2^{-\nu_1+1},\\
C_{0, w_0}  (g,\nu) &= \frac{\Lambda(2\nu_2-\nu_1)}{\Lambda(2\nu_2-\nu_1+1)}\frac{\Lambda(2\nu_2)}{\Lambda(2\nu_2+1)}\frac{\Lambda(\nu_1)}{\Lambda(\nu_1+1)} \frac{\Lambda(2\nu_1-2\nu_2)}{\Lambda(2\nu_1-2\nu_2+1)} y_1^{-\nu_1+2} y_2^{\nu_1-2\nu_2+1}.
\ea
\end{prp}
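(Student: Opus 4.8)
The base cases are already in hand: the identity term is immediate, and the direct computation above --- using the Pl\"ucker coordinate coset representatives of \Cref{section:coset_rep} together with the integral in \Cref{GR3.251} --- yields $C_{0,s_\alpha}$ and $C_{0,s_\beta}$, which I record as intertwining-operator identities
\ba
M(s_\alpha,\nu)I_0(g,\nu) &= \frac{\Lambda(2\nu_1-2\nu_2)}{\Lambda(2\nu_1-2\nu_2+1)}I_0(g,(2\nu_2-\nu_1,\nu_2)),\\
M(s_\beta,\nu)I_0(g,\nu) &= \frac{\Lambda(2\nu_2-\nu_1)}{\Lambda(2\nu_2-\nu_1+1)}I_0(g,(\nu_1,\nu_1-\nu_2)).
\ea
Each basic operator multiplies $I_0$ by an explicit ratio of completed zeta functions and shifts the spectral parameter by the corresponding simple reflection, $s_\alpha\colon(\nu_1,\nu_2)\mapsto(2\nu_2-\nu_1,\nu_2)$ and $s_\beta\colon(\nu_1,\nu_2)\mapsto(\nu_1,\nu_1-\nu_2)$. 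The decomposition $C_0=\sum_{w}C_{0,w}$ is already set up from the Bruhat partition, so it remains only to pin down each $C_{0,w}$.

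The plan is to obtain the six remaining terms purely formally, with no further integration. By the earlier identity $M(w,\nu)\phi_\nu=C_{0,w^{-1}}$ I have $C_{0,w}(g,\nu)=M(w^{-1},\nu)I_0(g,\nu)$. For each $w$ I fix a reduced word for $w^{-1}$ in $s_\alpha,s_\beta$ and apply the cocycle relation of \Cref{intertwiningFE},
\ba
M(w'w,\pi)=M(w',w\pi)\circ M(w,\pi),
\ea
peeling off one simple reflection at a time. At each stage the incoming parameter $\mu$ is known, the scalar factor is read off from the two formulas above with $\nu$ replaced by $\mu$, and the outgoing parameter is $s_\alpha\mu$ or $s_\beta\mu$. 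Accumulating the scalars and composing the parameter shifts expresses $C_{0,w}$ as a product of $\Lambda$-ratios times $I_0(g,w^{-1}\nu)$, which I expand into powers of $y_1,y_2$ to match the table.

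As an illustration, $w=s_\alpha s_\beta$ has $w^{-1}=s_\beta s_\alpha$, so
\ba
C_{0,s_\alpha s_\beta}=M(s_\beta s_\alpha,\nu)I_0=M(s_\beta,s_\alpha\nu)\circ M(s_\alpha,\nu)I_0.
\ea
Applying $M(s_\alpha,\nu)$ produces $\Lambda(2\nu_1-2\nu_2)/\Lambda(2\nu_1-2\nu_2+1)$ and the parameter $(2\nu_2-\nu_1,\nu_2)$; applying $M(s_\beta,\cdot)$ there produces $\Lambda(\nu_1)/\Lambda(\nu_1+1)$, since $2\nu_2-(2\nu_2-\nu_1)=\nu_1$, and the parameter $(2\nu_2-\nu_1,\nu_2-\nu_1)$, giving $I_0=y_1^{2\nu_2-\nu_1+2}y_2^{-\nu_1+1}$ as claimed. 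The elements $s_\beta s_\alpha$, $s_\alpha s_\beta s_\alpha$, $s_\beta s_\alpha s_\beta$, and the longest element $s_\alpha s_\beta s_\alpha s_\beta$ are handled identically from the reduced words for their inverses.

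The computation is routine bookkeeping; the one genuine point is \emph{well-definedness}. An element admitting two reduced expressions related by the braid relation $s_\alpha s_\beta s_\alpha s_\beta=s_\beta s_\alpha s_\beta s_\alpha$ must yield the same scalar and the same final parameter regardless of the word chosen. This is exactly the content of \Cref{intertwiningFE}: since $M(\cdot,\cdot)$ satisfies the cocycle relation on $W$, any two factorizations of $M(w^{-1},\nu)$ agree, so the resulting $\Lambda$-product is independent of the reduced word. I regard confirming this consistency --- equivalently, checking that telescoping the simple-reflection factors along the two sides of the braid relation produces identical $\Lambda$-ratios --- as the main (and essentially only) conceptual obstacle; everything else reduces to substituting the shifts $s_\alpha,s_\beta$ and simplifying.
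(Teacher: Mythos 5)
Your proposal is correct and follows essentially the same route as the paper: compute $C_{0,\id}$, $C_{0,s_\alpha}$, $C_{0,s_\beta}$ directly from the Bruhat coset representatives and \Cref{GR3.251}, identify $C_{0,w}=M(w^{-1},\nu)I_0$, and then obtain the remaining six terms by iterating the cocycle relation of \Cref{intertwiningFE} along reduced words. Your worked example for $s_\alpha s_\beta$ and your remark on independence of the reduced word (guaranteed by the functional equation) match the paper's argument.
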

\begin{rmk}
Note that the constant term $C_0(g,\nu)$ is precisely the constant Fourier coefficient $E_{0,\chi_{0,0}}(g,\nu)$. This gives an expression for $E_{0,\chi_{0,0}}(g,\nu)$ nicer than the one given in \Cref{Fourier:min}. Nevertheless, we keep the former expression to maintain the structural consistency of the expressions there.
\end{rmk}

\subsection{Constant term along \texorpdfstring{$P_\alpha$}{Pa}}\label{minSiegel}

We have to express the constant terms by using intertwining operators. A detailed description is given in \cite[II.1.7]{MW1995}. Let $W = W(T, G)$ be the Weyl group, and $W_M = W(T,M)$ the Weyl group corresponding to $M$. We define 
\ba
W(M,M') := \cbm{w\in W}{w^{-1}(\lambda) > 0 \text{ for any positive root $\lambda$ of $M'$ over $A_0$, and $wMw^{-1}\sbe M'$}}.
\ea

In general, if $E(\phi_\pi, \pi)$ is an Eisenstein series along a parabolic $P=MN$, its constant term along the parabolic $P'=M'N'$ is given by
\ba
\int_{P'} E(\phi_\pi, \pi)(g) = \int_{N'(k)\bs N'(\A)} E(\phi_\pi, \pi)(\eta g)d\eta = \int_{N'(k)\bs N'(\A)} \sum\limits_{\gamma\in P(k)\bs G(k)} \phi_\pi(\gamma \eta g) d\eta.
\ea
By \cite[II.1.7]{MW1995}, it can also be expressed via intertwining operators:
\ba
\int_{P'} E(\phi_\pi, \pi)(g) = \sum\limits_{w\in W(M,M')} \sum\limits_{m' \in (M'(k) \cap w P(k) w^{-1}) \bs M'(k)} \mc M(w,\pi)\phi(m'g).
\ea
We compute $W(M_0, M_\alpha) = \{\id, s_\beta, s_\beta s_\alpha, s_\beta s_\alpha s_\beta\}$. By \cite[II.1.7]{MW1995} again, we have
\ba
\int_{N_\alpha(\Z)\bs N_\alpha(\R)} E(\phi_\nu, \nu)(g) = \sum\limits_{w \in W(M_0, M_\alpha)} \sum\limits_{m \in (M_\alpha(\Q) \cap w P(\Q) w^{-1} )\bs M_\alpha(\Q)} \mc M(w,\nu) \phi_\nu(mg).
\ea
Meanwhile
\ba
\mc M(w,\nu) \phi_\nu(mg) &= \int_{(U_\alpha(\Q) \cap w U(\Q) w^{-1})\bs U_\alpha(\A)} \phi_\nu(w^{-1}umg) du\\
&= \int_{U_\alpha(\Q)\bs U_\alpha(\A)} \sum\limits_{u'\in (U_\alpha(\Q)\cap w U(\Q) w^{-1})\bs U_\alpha(\Q)} \phi_\nu(w^{-1}uu'mg) du.
\ea
This is just the constant term integral with respect to $\gamma = w^{-1}mu$ for a given $w\in W(M_0,M_\alpha)$ and $m\in (M_\alpha(\Q) \cap w P(\Q) w^{-1}) \bs M_\alpha(\Q)$. To compute the constant terms, it suffices to find $\phi_\nu(g)$. Now, a set of coset representatives of $(M_\alpha(\Q) \cap w P(\Q) w^{-1})\bs M_\alpha(\Q)$ (which turns out to be independent of $w$) is given by
\ba
\bp 1\\ &1\\&&1\\&&&1\ep \cup \cbm{ m_{\kappa_1, \kappa_2} := \bp & \kappa_2^{-1}\\ \kappa_2 & -\kappa_1\\&&\kappa_1 & \kappa_2\\&&\kappa_2^{-1}\ep }{ \kappa_2 \in \N, (\kappa_1,\kappa_2)=1}.
\ea
These representatives have equivalence with integral matrices (with unit determinant) under the action of $P(\Q)$, so we only have to consider the Archimedean place.
\begin{rmk}
The parameters $\kappa_1, \kappa_2$ are just the Plücker coordinates $v_3, v_4$ in the Bruhat decomposition with $w = s_\alpha$. 
\end{rmk}

When $m=\id$, the constant term integral is identical to those over the minimal parabolic, as the integral is independent of $n_1$. So we have
\ba
\phi_\nu(g) &= y_1^{\nu_1+2} y_2^{2\nu_2-\nu_1+1},\\
\mc M(s_\beta, \nu) \phi_\nu(g) &= \frac{\Lambda(2\nu_2-\nu_1)}{\Lambda(2\nu_2-\nu_1+1)} y_1^{\nu_1+2} y_2^{\nu_1-2\nu_2+1},\\
\mc M(s_\beta s_\alpha, \nu) \phi_\nu(g) &= \frac{\Lambda(\nu_1)}{\Lambda(\nu_1+1)} \frac{\Lambda(2\nu_1-2\nu_2)}{\Lambda(2\nu_1-2\nu_2+1)} y_1^{2\nu_2-\nu_1+2} y_2^{-\nu_1+1},\\
\mc M(s_\beta s_\alpha s_\beta, \nu) \phi_\nu(g) &= \frac{\Lambda(\nu_1)}{\Lambda(\nu_1+1)}\frac{\Lambda(2\nu_2)}{\Lambda(2\nu_2+1)}\frac{\Lambda(2\nu_2-\nu_1)}{\Lambda(2\nu_2-\nu_1+1)} y_1^{\nu_1-2\nu_2+2} y_2^{-\nu_1+1}.
\ea

Now we compute $\mc M(w,\nu)(mg)$ for $m\in (M_\alpha(\Q) \cap w P(\Q) w^{-1})\bs M_\alpha(\Q)$. Let $g = (g_\infty, 1,\cdots)$. Analogously to the constant term computations over the minimal parabolic with $w=s_\alpha$, we see that  if $f(g) = y_1^{c_1} y_2^{c_2}$, then
\ba
f(m_{\kappa_1, \kappa_2}g) = y_1^{c_2} y_2^{c_1} Q(\kappa_1, \kappa_2)^{c_2/2-c_1/2},
\ea
where $Q(\kappa_1, \kappa_2)$ is the quadratic form defined by
\ba
Q(\kappa_1, \kappa_2):= \kappa_1^2-2n_1\kappa_1\kappa_2+\rb{n_1^2+\frac{y_1^2}{y_2^2}}\kappa_2^2.
\ea
Hence
\begin{align*}
\phi_\nu(m_{\kappa_1,\kappa_2}g) &= y_1^{\nu_1+2} y_2^{2\nu_2-\nu_1+1} Q(\kappa_1,\kappa_2)^{\nu_2-\nu_1-1/2},\\
\mc M(s_\beta, \nu)\phi_\nu(m_{\kappa_1,\kappa_2}g) &= \frac{\Lambda(2\nu_2-\nu_1)}{\Lambda(2\nu_2-\nu_1+1)} y_1^{\nu_1+2} y_2^{\nu_1-2\nu_2+1} Q(\kappa_1,\kappa_2)^{-\nu_2-1/2},\\
\mc M(s_\beta s_\alpha, \nu)\phi_\nu(m_{\kappa_1,\kappa_2}g) &= \frac{\Lambda(\nu_1)}{\Lambda(\nu_1+1)} \frac{\Lambda(2\nu_1-2\nu_2)}{\Lambda(2\nu_1-2\nu_2+1)} y_1^{2\nu_2-\nu_1+2} y_2^{-\nu_1+1} Q(\kappa_1,\kappa_2)^{-\nu_2-1/2},\\
\mc M(s_\beta s_\alpha s_\beta, \nu)\phi_\nu(m_{\kappa_1,\kappa_2}g) &= \frac{\Lambda(\nu_1)}{\Lambda(\nu_1+1)}\frac{\Lambda(2\nu_2)}{\Lambda(2\nu_2+1)}\frac{\Lambda(2\nu_2-\nu_1)}{\Lambda(2\nu_2-\nu_1+1)} y_1^{\nu_1-2\nu_2+2} y_2^{-\nu_1+1}\\
&\hspace{0.5cm} \times Q(\kappa_1,\kappa_2)^{\nu_2-\nu_1-1/2},
\end{align*}

The terms then assemble into a $\GL(2)$ Eisenstein series, whose definition we now recall:
\ba
E(z,s) := \frac{1}{2} \sum\limits_{\delta\in\Gamma_{\infty, 2} \bs \Gamma_2} I(\gamma z, s),
\ea
where $\Gamma_2 = \SL(2,\Z)$, $\Gamma_{\infty, 2} =\{(\begin{smallmatrix} 1&b\\&1\end{smallmatrix}) \in\SL(2,\Z)\}$, and $I(z,s) = \im(z)^{s+1/2}$.

\begin{prp}\label{constterm:minSiegel}
The constant term for the minimal parabolic Eisenstein series over the Siegel parabolic subgroup $P_\alpha$ is given by
\ba
C_0^\alpha(g,\nu) = \sum\limits_{w \in W(M_0,M_\alpha)} C_{0,w}^\alpha (g,\nu),
\ea
where
\ba
C_{0,\id}^\alpha (g,\nu) &= E\rb{-n_1+\frac{y_1}{y_2}i, \nu_1-\nu_2} y_1^{\nu_2+3/2} y_2^{\nu_2+3/2},\\
C_{0,s_\beta}^\alpha (g,\nu) &= \frac{\Lambda(2\nu_2-\nu_1)}{\Lambda(2\nu_2-\nu_1+1)} E\rb{-n_1+\frac{y_1}{y_2}i, \nu_2} y_1^{\nu_1-\nu_2+3/2} y_2^{\nu_1-\nu_2+3/2},\\
C_{0,s_\beta s_\alpha}^\alpha (g,\nu) &= \frac{\Lambda(\nu_1)}{\Lambda(\nu_1+1)} \frac{\Lambda(2\nu_1-2\nu_2)}{\Lambda(2\nu_1-2\nu_2+1)} E\rb{-n_1+\frac{y_1}{y_2}i, \nu_2} y_1^{\nu_2-\nu_1+3/2} y_2^{\nu_2-\nu_1+3/2},\\
C_{0,s_\beta s_\alpha s_\beta}^\alpha (g,\nu) &= \frac{\Lambda(\nu_1)}{\Lambda(\nu_1+1)}\frac{\Lambda(2\nu_2)}{\Lambda(2\nu_2+1)}\frac{\Lambda(2\nu_2-\nu_1)}{\Lambda(2\nu_2-\nu_1+1)} E\rb{-n_1+\frac{y_1}{y_2}i, \nu_1-\nu_2} y_1^{-\nu_2+3/2} y_2^{-\nu_2+3/2}.
\ea
\end{prp}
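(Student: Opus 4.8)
The plan is to feed the already-computed intertwining values into the formula \cite[II.1.7]{MW1995}
\[
C_0^\alpha(g,\nu) = \sum_{w \in W(M_0,M_\alpha)} \sum_{m \in (M_\alpha(\Q) \cap w P(\Q) w^{-1}) \bs M_\alpha(\Q)} M(w,\nu) \phi_\nu(mg)
\]
recalled above, and to show that for each fixed $w$ the inner sum over the coset representatives $m$ collapses to a single classical $\GL(2)$ Eisenstein series in the variable $z = -n_1 + \tfrac{y_1}{y_2} i$. Since the four values $M(w,\nu)\phi_\nu(g)$ and $M(w,\nu)\phi_\nu(m_{\kappa_1,\kappa_2}g)$ have been computed above, the remaining work is purely the identification of this $m$-sum; no new integral has to be evaluated.

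First I would record the identity that links the quadratic form to the hyperbolic plane: with $z = -n_1 + \tfrac{y_1}{y_2} i$ one has $\im z = y_1/y_2$ and
\[
Q(\kappa_1,\kappa_2) = \kappa_1^2 - 2n_1 \kappa_1 \kappa_2 + \rb{n_1^2 + \tfrac{y_1^2}{y_2^2}} \kappa_2^2 = \vb{\kappa_2 z + \kappa_1}^2 .
\]
Consequently every factor $Q(\kappa_1,\kappa_2)^{-s-1/2}$ occurring in the computed values equals $\vb{\kappa_2 z + \kappa_1}^{-2s-1}$, which is exactly the summand of the classical expansion $E(z,s) = \tfrac{1}{2} \sum_{(c,d)=1} (\im z)^{s+1/2} \vb{cz+d}^{-2s-1}$ under the correspondence $(c,d) = (\kappa_2,\kappa_1)$.

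It then remains to glue the two types of coset representative into one Eisenstein series. For fixed $w$, the computed values take the shape $M(w,\nu)\phi_\nu(g) = c_w\, y_1^{a} y_2^{b}$ and $M(w,\nu)\phi_\nu(m_{\kappa_1,\kappa_2}g) = c_w\, y_1^{a} y_2^{b}\, Q(\kappa_1,\kappa_2)^{-s_w-1/2}$, where $c_w$ is the product of ratios of completed zeta functions already determined and $s_w \in \cb{\nu_2,\ \nu_1-\nu_2}$ is read off from the exponent of $Q$. The identity coset $m=\id$ contributes $c_w y_1^a y_2^b$, which is precisely the degenerate $c=0$ part of $E(z,s_w)$ (the two terms $d = \pm 1$ with the global factor $\tfrac12$ give coefficient $1$), while the family $m_{\kappa_1,\kappa_2}$ with $\kappa_2 \geq 1$, $(\kappa_1,\kappa_2)=1$ supplies exactly the terms with $c \neq 0$, the factor $\tfrac12$ absorbing the pairing $(c,d) \lra (-c,-d)$. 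Hence
\[
\sum_m M(w,\nu)\phi_\nu(mg) = c_w\, y_1^{a} y_2^{b}\, (\im z)^{-s_w-1/2}\, E(z,s_w),
\]
and substituting $(\im z)^{-s_w-1/2} = (y_1/y_2)^{-s_w-1/2}$ and collecting powers of $y_1,y_2$ gives $C_{0,w}^\alpha(g,\nu) = c_w\, E(z,s_w)\, y_1^{a'} y_2^{a'}$ with $a' = a - s_w - \tfrac12 = b + s_w + \tfrac12$; one checks that these two expressions for $a'$ agree in each of the four cases, equivalently that $a-b = 2s_w+1$ (reflecting that the $m_{\kappa_1,\kappa_2}$-action symmetrises the torus exponents). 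Running through $w \in \cb{\id, s_\beta, s_\beta s_\alpha, s_\beta s_\alpha s_\beta}$ then produces the four displayed formulae.

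The only genuinely delicate point is the bookkeeping in the gluing step: one must verify that the single identity term matches the $c=0$ part of the $\GL(2)$ Eisenstein series with the right multiplicity, that the range $\kappa_2 \geq 1$ with $\kappa_1$ coprime to $\kappa_2$ (and \emph{no} reduction modulo $\kappa_2$, unlike the representatives $R_{s_\alpha}$ of the full Eisenstein series) reproduces all of $c \neq 0$ exactly once, and that the normalising factor $\tfrac12$ and the completed-zeta prefactors $c_w$ are carried through without error. Once these conventions are fixed, the identification is immediate.
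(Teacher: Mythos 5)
Your proposal is correct and follows essentially the same route as the paper: the formula from \cite[II.1.7]{MW1995}, the coset representatives $\id$ and $m_{\kappa_1,\kappa_2}$, the identification $Q(\kappa_1,\kappa_2)=\vb{\kappa_2 z+\kappa_1}^2$ with $z=-n_1+\tfrac{y_1}{y_2}i$, and the reassembly of the $m$-sum into $E(z,s_w)$ (which the paper leaves as ``the terms then assemble into a $\GL(2)$ Eisenstein series''). Your explicit bookkeeping of the $c=0$ versus $c\neq 0$ terms and the consistency check $a-b=2s_w+1$ is exactly the verification the paper omits, and it comes out right in all four cases.
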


\subsection{Constant term along \texorpdfstring{$P_\beta$}{Pb}}\label{minNS}

Similarly, we compute $W(M_0, M_\beta) = \{\id, s_\alpha, s_\alpha s_\beta, s_\alpha s_\beta s_\alpha\}$, and obtain the following proposition.

\begin{prp}\label{constterm:minNS}
The constant term for the minimal parabolic Eisenstein series over the Jacobi parabolic subgroup $P_\beta$ is given by
\ba
C_0^\beta (g,\nu) = \sum\limits_{w\in W(M_0,M_\beta)} C_{0,w}^\beta (g,\nu),
\ea
where
\ba
C_{0,\id}^\beta (g,\nu) &= E\rb{-n_5+y_2^2i, \nu_2-\frac{\nu_1}{2}} y_1^{\nu_1+2},\\
C_{0,s_\alpha}^\beta (g,\nu) &= \frac{\Lambda(2\nu_1-2\nu_2)}{\Lambda(2\nu_1-2\nu_2+1)} E\rb{-n_5+y_2^2i, \frac{\nu_1}{2}} y_1^{2\nu_2-\nu_1+2},\\
C_{0,s_\alpha s_\beta}^\beta (g,\nu) &= \frac{\Lambda(2\nu_2)}{\Lambda(2\nu_2+1)}\frac{\Lambda(2\nu_2-\nu_1)}{\Lambda(2\nu_2-\nu_1+1)} E\rb{-n_5+y_2^2i, \frac{\nu_1}{2}} y_1^{\nu_1-2\nu_2+2},\\
C_{0,s_\alpha s_\beta s_\alpha}^\beta (g,\nu) &= \frac{\Lambda(2\nu_2)}{\Lambda(2\nu_2+1)}\frac{\Lambda(\nu_1)}{\Lambda(\nu_1+1)} \frac{\Lambda(2\nu_1-2\nu_2)}{\Lambda(2\nu_1-2\nu_2+1)} E\rb{-n_5+y_2^2i, \nu_2-\frac{\nu_1}{2}} y_1^{-\nu_1+2}.
\ea
\end{prp}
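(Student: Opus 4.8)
The plan is to recognise each inner sum over the coset representatives $m_{\kappa_1,\kappa_2}$ as a classical $\GL(2)$ Eisenstein series in the variable $z = -n_5 + y_2^2 i$. The key algebraic input is the identity
\[
Q(\kappa_1,\kappa_2) = \rb{n_5^2+y_2^4}\kappa_1^2 - 2n_5\kappa_1\kappa_2 + \kappa_2^2 = \vb{\kappa_1 z + \kappa_2}^2,
\]
which holds because $\im(z) = y_2^2$ and $\re(z) = -n_5$. With this in hand the four blocks indexed by $w \in W(M_0,M_\beta)$ can be treated uniformly, so I would fix one $w$ and carry out the assembly once.

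For a fixed $w$, write the $m=\id$ contribution as $L_w\, y_1^{a_w} y_2^{c_2}$, where $L_w$ is the product of completed-zeta ratios read off from the tabulated values of $M(w,\nu)\phi_\nu(g)$, and $a_w, c_2$ are the recorded exponents. Since $m_{\kappa_1,\kappa_2}$ lies in the $\SL_2$-factor of $M_\beta$ (its top-left entry is $1$), the transformation law $f(m_{\kappa_1,\kappa_2}g) = y_1^{c_1}y_2^{c_2}Q(\kappa_1,\kappa_2)^{-c_2/2}$ leaves $L_w$ and the $y_1$-power untouched and produces the extra factor $Q(\kappa_1,\kappa_2)^{-c_2/2}$. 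Setting $s_w := (c_2-1)/2$ so that $c_2 = 2s_w+1$, one gets $y_2^{c_2} = \im(z)^{s_w+1/2}$ and $Q(\kappa_1,\kappa_2)^{-c_2/2} = \vb{\kappa_1 z + \kappa_2}^{-(2s_w+1)}$. Reading off $c_2$ in each case gives $s_w = \nu_2-\nu_1/2$ for $w\in\cb{\id, s_\alpha s_\beta s_\alpha}$ and $s_w = \nu_1/2$ for $w\in\cb{s_\alpha, s_\alpha s_\beta}$, matching the spectral parameters asserted in the proposition.

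It then remains to sum over the coset representatives. The quotient $(M_\beta(\Q)\cap wP(\Q)w^{-1})\bs M_\beta(\Q)$ is a Borel quotient of the $\SL_2$-factor, hence parametrised by $\P^1(\Q)$: the identity coset together with the classes of $m_{\kappa_1,\kappa_2}$ for $\kappa_1\geq 1$ and $\kappa_2$ coprime to $\kappa_1$. Comparing with the expansion
\[
E(z,s) = \im(z)^{s+1/2}\rb{1 + \sum\limits_{\kappa_1\geq 1}\sum\limits_{(\kappa_1,\kappa_2)=1}\vb{\kappa_1 z + \kappa_2}^{-(2s+1)}},
\]
obtained from the definition of $E(z,s)$ by isolating the $\kappa_1=0$ contribution (which yields the leading $\im(z)^{s+1/2}$ once the $\pm$ identification absorbs the factor $\tfrac12$) and pairing $(\kappa_1,\kappa_2)$ with $(-\kappa_1,-\kappa_2)$ for $\kappa_1\neq 0$, one sees that the $m=\id$ term supplies the $1$ inside the bracket while the $m_{\kappa_1,\kappa_2}$-sum supplies the remainder. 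Thus the $w$-block equals $L_w\, y_1^{a_w} E(z,s_w)$, and summing the four blocks produces the stated formula.

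The step I expect to be most delicate is the combinatorial matching in the last paragraph: one must check that the range of $\kappa_2$ coming from the coset representatives is \emph{all} integers coprime to $\kappa_1$ — not merely residues modulo $\kappa_1$, as in the Bruhat representatives $R_{s_\beta}$, since here no integration over $n_5$ unfolds that residue — and that the $\pm$ bookkeeping correctly reproduces both the single leading term from $\kappa_1=0$ and the normalisation $\tfrac12$ in the definition of $E(z,s)$. Everything else is a direct substitution of the previously tabulated values of $M(w,\nu)\phi_\nu(g)$ and $M(w,\nu)\phi_\nu(m_{\kappa_1,\kappa_2}g)$.
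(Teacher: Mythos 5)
Your proposal is correct and follows essentially the same route as the paper: identify $W(M_0,M_\beta)$, take the coset representatives $m_{\kappa_1,\kappa_2}$ with $\kappa_1\in\N$ and $(\kappa_1,\kappa_2)=1$, apply the transformation law $f(m_{\kappa_1,\kappa_2}g)=y_1^{c_1}y_2^{c_2}Q(\kappa_1,\kappa_2)^{-c_2/2}$, and assemble each $w$-block into $E(-n_5+y_2^2i,s_w)$ with $s_w=(c_2-1)/2$. The only difference is that you make explicit the final matching with the definition of $E(z,s)$ (the $\kappa_1=0$ term, the $\pm$ identification, and the factor $\tfrac12$), which the paper compresses into the sentence ``the terms then assemble into a $\GL(2)$ Eisenstein series''; your reading of the $\kappa_2$-range as all integers coprime to $\kappa_1$ agrees with the paper's stated coset representatives.
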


\section{$\Sp(4)$ Ramanujan sums} \label{section:Sp4_Ramanujan}

 In the computation of the Fourier coefficients of Eisenstein series, we will come across a sum of the following form:
 \begin{align}\label{eq:Sp4rs_dirichlet} 
\mc R_{\nu_1,\nu_2}(n_1,n_2) := \sum\limits_{v_1, v_{12}\geq 1} v_1^{-\nu_1} v_{12}^{-\nu_2} \sum\limits_{\substack{v_2, v_3, v_4\ppmod{v_1}\\ v_{13}, v_{14}\ppmod{v_{12}}\\ v_1v_{13} + v_2v_{14} - v_4v_{12} \equiv 0\ppmod{v_1v_{12}}\\ (v_1, v_2, v_3, v_4) = 1\\ (v_{12}, v_{13}, v_{14}, v_{23}, v_{34}) = 1}} \e\rb{\frac{n_1 v_2}{v_1} + \frac{n_2v_{14}}{v_{12}}}
 \end{align}
This can be considered as a generalisation of Ramanujan sums, since in the degenerate cases $n_1=0$ or $n_2=0$, the sum reduces to a classical Ramanujan sum, with some extra factors. To state the main result of this section, we introduce the symplectic Schur functions for $\Sp(4,\C)$:
\ba
\Sp_{\lambda_1, \lambda_2} (x_1, x_2) := \frac{\bv x_1^{\lambda_1+2} - x_1^{-(\lambda_1+2)} & x_2^{\lambda_1+2} - x_2^{-(\lambda_1+2)}\\
x_1^{\lambda_2+1}-x_1^{-(\lambda_2+1)} & x_2^{\lambda_2+1}-x_2^{-(\lambda_2+1)}\ev}{\bv x_1^2-x_1^{-2} & x_2^2-x_2^{-2}\\ x_1-x_1^{-1} & x_2-x_2^{-1}\ev} \qquad (\lambda_1\geq \lambda_2\geq 0).
\ea
\begin{rmk}
The terms in $\Sp_{e_1+e_2, e_2}(x_1, x_2)$ correspond to the dimensions of weight spaces of the irreducible representation $V(e_1\omega_1+e_2\omega_2)$ of $\fsp_4(\C)$, which is a special instance of the Weyl character formula (see \cite[Ch. 24]{FH2004}).
\end{rmk}

We also define a multiplicative function $\sigma_{\nu_1, \nu_2}(n_1,n_2)$ by setting for $p$ prime
\begin{equation}\label{eq:sd} 
\sigma_{\nu_1, \nu_2} (p^{e_1}, p^{e_2}) := p^{(e_1+e_2)\nu_1+e_1\nu_2} \Sp_{e_1+e_2, e_1} (p^{\nu_1}, p^{\nu_2}).
\end{equation}

\begin{prp}\label{prp:Sp4rs} 
The sum $\mc R_{\nu_1,\nu_2}(n_1,n_2)$ evaluates as follows.

For $n_1, n_2\ne 0$ we have
\ba
\mc R_{\nu_1,\nu_2}(n_1,n_2) &= \frac{\sigma_{3/2-\nu_1/2-\nu_2, 1/2-\nu_1/2}(n_1, n_2)}{\zeta(\nu_1)\zeta(\nu_2)\zeta(\nu_1+\nu_2-1)\zeta(\nu_1+2\nu_2-2)}.
\ea
For $n_1\ne 0$, $n_2 = 0$ we have
\ba
\mc R_{\nu_1,\nu_2}(n_1,0) &= \frac{\sigma_{1-\nu_1}(n_1)}{\zeta(\nu_1)}\frac{\zeta(\nu_2-1)}{\zeta(\nu_2)}\frac{\zeta(\nu_1+\nu_2-2)}{\zeta(\nu_1+\nu_2-1)}\frac{\zeta(\nu_1+2\nu_2-3)}{\zeta(\nu_1+2\nu_2-2)}.
\ea
For $n_1=0$, $n_2\ne 0$ we have
\ba
\mc R_{\nu_1,\nu_2}(0,n_2) &= \frac{\sigma_{1-\nu_2}(n_2)}{\zeta(\nu_2)}\frac{\zeta(\nu_1-1)}{\zeta(\nu_1)}\frac{\zeta(\nu_1+\nu_2-2)}{\zeta(\nu_1+\nu_2-1)}\frac{\zeta(\nu_1+2\nu_2-3)}{\zeta(\nu_1+2\nu_2-2)}.
\ea
For $n_1=n_2=0$ we have
\ba
\mc R_{\nu_1,\nu_2}(0,0) &= \frac{\zeta(\nu_1-1)}{\zeta(\nu_1)}\frac{\zeta(\nu_2-1)}{\zeta(\nu_2)}\frac{\zeta(\nu_1+\nu_2-2)}{\zeta(\nu_1+\nu_2-1)}\frac{\zeta(\nu_1+2\nu_2-3)}{\zeta(\nu_1+2\nu_2-2)}.
\ea
\end{prp}
\begin{proof}
For fixed $v_1, v_2, v_{12}, v_{14}$, define
\begin{multline*}
L_{v_1, v_{12}} (v_2, v_{14}) := \{ (v_3\ppmod{v_1}, \; v_4\ppmod{v_1},\; v_{13}\ppmod{v_{12}}) \mid\\
v_1v_{13} + v_2v_{14} - v_4v_{12}  \equiv 0 \ppmod{v_1v_{12}}, \; (v_1, v_2, v_3, v_4) = 1, \; (v_{12}, v_{13}, v_{14}, v_{23}, v_{34}) = 1 \},
\end{multline*}
and
\ba
\Lambda_{v_1, v_{12}} (v_2, v_{14}) := \vb{L_{v_1, v_{12}} (v_2, v_{14})}.
\ea
Further, define
\begin{equation}\label{eq:Sp4rsd} 
R_{v_1, v_{12}} (n_1, n_2) := \sum\limits_{\substack{v_2\ppmod{v_1}\\ v_{14}\ppmod{v_{12}}}} \Lambda_{v_1, v_{12}} (v_2, v_{14}) \e\rb{\frac{n_1 v_2}{v_1} + \frac{n_2v_{14}}{v_{12}}}.
\end{equation}
Then we can rewrite \ref{eq:Sp4rs_dirichlet} as
\ba
\mc R_{\nu_1,\nu_2} (n_1,n_2) =  \sum\limits_{v_1, v_{12}\geq 1} v_1^{-\nu_1} v_{12}^{-\nu_2} R_{v_1, v_{12}} (n_1, n_2).
\ea
Now define
\ba
r_{v_1, v_{12}} (n_1, n_2) := \sum\limits_{\substack{u_1\mid v_1\\ u_{12}\mid v_{12}}} R_{u_1, u_{12}} (n_1, n_2).
\ea
We expand
\ba
r_{v_1, v_{12}} (n_1, n_2) &= \sum\limits_{\substack{u_1\mid v_1\\ u_{12}\mid v_{12}}} R_{u_1, u_{12}} (n_1, n_2)\\
&= \sum\limits_{\substack{u_1\mid v_1\\ u_{12}\mid v_{12}}} \sum\limits_{\substack{u_2\ppmod{u_1}\\ u_{14}\ppmod{u_{12}}}} \sum\limits_{\substack{ u_3, u_4\ppmod{u_1}\\ u_{13}\ppmod{u_{12}}\\ u_1u_{13}+u_2u_{14}-u_4u_{12} \equiv 0 \ppmod{u_1u_{12}}\\ (u_1, u_2, u_3, u_4) = 1\\ (v_{12}, v_{13}, v_{14}, v_{23}, v_{34}) = 1}} \e\rb{\frac{n_1u_2}{u_1}+\frac{n_2u_{14}}{u_{12}}}.
\ea
Find $d_1, d_{12}$ such that $v_1 = u_1 d_1$, $v_{12} = u_{12} d_{12}$, and let $v_2 = u_2 d_1$, $v_3 = u_3 d_1$, $v_4 = u_4 d_1$, $v_{13} = u_{13} d_{12}$, $v_{14} = u_{14} d_{12}$. Then the sum becomes
\ba
r_{v_1, v_{12}} (n_1, n_2) &= \sum\limits_{\substack{d_1\mid v_1\\ d_{12}\mid v_{12}}} \sum\limits_{\substack{v_2\ppmod{v_1}\\ v_{14}\ppmod{v_{12}}}} \sum\limits_{\substack{ v_3, v_4\ppmod{v_1}\\ v_{13}\ppmod{v_{12}}\\ v_1v_{13}+v_2v_{14}-v_4v_{12}\equiv 0\ppmod{v_1v_{12}}\\ (v_1, v_2, v_3, v_4) = d_1\\ (v_{12}, v_{13}, v_{14}, v_{23}, v_{34}) = d_{12}}} \e\rb{\frac{n_1 v_2}{v_1} + \frac{n_2v_{14}}{v_{12}}}\\
&= \sum\limits_{\substack{v_2\ppmod{v_1}\\ v_{14}\ppmod{v_{12}}}} \sum\limits_{\substack{ v_3, v_4\ppmod{v_1}\\ v_{13}\ppmod{v_{12}}\\ v_1v_{13}+v_2v_{14}-v_4v_{12}\equiv 0\ppmod{v_1v_{12}}\\ v_{23}, v_{34}\in\Z}} \e\rb{\frac{n_1 v_2}{v_1} + \frac{n_2v_{14}}{v_{12}}},
\ea
so we get rid of the coprimality condition. Note that $v_{23}, v_{34}\in\Z$ is equivalent to
\ba
v_1\mid v_2v_{13}-v_3v_{12}, \quad v_1\mid v_3v_{14}-v_4v_{13}.
\ea
Fixing $v_1, v_2, v_{12}, v_{14}$, we want to find the size of the set
\begin{multline*}
S(v_1, v_{12}, v_2, v_{14}) := \{(v_3\ppmod{v_1},\; v_4\ppmod{v_1},\; v_{13}\ppmod{v_{12}} \mid\\
v_1v_{13} + v_2v_{14} - v_4v_{12} \equiv 0 \ppmod{v_1v_{12}}, \; v_1\mid v_2v_{13}-v_3v_{12}, \; v_1\mid v_3v_{14}-v_4v_{13}\}.
\end{multline*}

This is actually a local problem. Let $v_1 = p^{w_1}, v_2 = p^{w_2}, v_{12} = p^{w_{12}}, v_{14} = p^{w_{14}}$. We may assume $w_2\leq w_1, w_{14}\leq w_{12}$. Note that we need to have $w_2+w_{14}\geq \min\cb{w_1, w_{12}}$ for $S(p^{w_1}, p^{w_{12}}, p^{w_2}, p^{w_{14}})$ to be non-empty. Let $d = \min\cb{w_1, w_{14}}$. Assuming $w_2+w_{14}\geq \min\cb{w_1, w_{12}}$, solving the congruence conditions gives:

\ben
\item For $w_1\leq w_{12}$,
\ber
\item if $2w_1-2w_2>w_{14}$, then
\ba
\vb{S(p^{w_1}, p^{w_{12}}, p^{w_2}, p^{w_{14}})} = \begin{cases} 0 & \text{if } w_{12} > w_2 + w_{14},\\
p^{w_2+w_{14}} & \text{if } w_{12}\leq w_2 + w_{14};\end{cases}
\ea
\item if $2w_1-2w_2\leq w_{14}$, then:
\bea
\item if $2w_1-w_2-w_{12}\geq 0$,
\ber
\item if $d+w_1+w_{12}-2w_2-2w_{14}\geq 1$, then 
\ba
\vb{S(p^{w_1}, p^{w_{12}}, p^{w_2}, p^{w_{14}})} = 2p^{w_2+w_{14}};
\ea
\item if $d+w_1+w_{12}-2w_2-2w_{14} = 0$ or $-1$, then 
\ba
\vb{S(p^{w_1}, p^{w_{12}}, p^{w_2}, p^{w_{14}})} = p^{w_1+w_{12}-w_2-w_{14}+d};
\ea
\item if $d+w_1+w_{12}-2w_2-2w_{14} \leq -2$, then 
\ba
\vb{S(p^{w_1}, p^{w_{12}}, p^{w_2}, p^{w_{14}})} = p^{\lfloor(w_1+w_{12}+d)/2\rfloor};
\ea
\ee
\item if $2w_1-w_2-w_{12}<0$, then
\ber
\item if $d+w_1+w_{12}-2w_2-2w_{14}\geq 1$, then 
\ba
\vb{S(p^{w_1}, p^{w_{12}}, p^{w_2}, p^{w_{14}})} = \begin{cases} 2p^{w_2+w_{14}} & \text{if } w_2+w_{14}\geq w_{12},\\
(p^{w_2+w_{14}}, p^{w_1+d}) & \text{if } w_2+w_{14}<w_{12}; \end{cases}
\ea
\item if $d+w_1+w_{12}-2w_2-2w_{14} = 0$ or $-1$, then 
\ba
\vb{S(p^{w_1}, p^{w_{12}}, p^{w_2}, p^{w_{14}})} = \begin{cases} p^{w_1+w_{12}-w_2-w_{14}+d} & \text{if } w_2+w_{14}\geq w_{12},\\ p^{w_1+d} & \text{if } w_2+w_{14}<w_{12}; \end{cases}
\ea
\item if $d+w_1+w_{12}-2w_2-2w_{14} \leq -2$, then 
\ba
\vb{S(p^{w_1}, p^{w_{12}}, p^{w_2}, p^{w_{14}})} = \begin{cases} (p^{\lfloor(w_1+w_{12}+d)/2\rfloor}, p^{w_1+d}) &\text{if } w_2+w_{14}>w_{12},\\ 
p^{w_1+w_{12}-w_2-w_{14}+d} & \text{if } w_2+w_{14} = w_{12},\\
p^{w_1+d} & \text{if } w_2+w_{14}<w_{12}. \end{cases}
\ea
\ee
\ee
\ee
\item For $w_1\geq w_{12}$,
\ber
\item if $w_{12}\geq w_2$ and $w_{14}\leq 2w_{12}-2w_2$, then
\ba
\vb{S(p^{w_1}, p^{w_{12}}, p^{w_2}, p^{w_{14}})} = p^{w_2+w_{14}};
\ea
\item if $w_{12}\geq w_2$ or $w_{14}\leq 2w_{12}-2w_2$, then
\ba
\vb{S(p^{w_1}, p^{w_{12}}, p^{w_2}, p^{w_{14}})} = p^{w_{12}+\lfloor w_{14}/2\rfloor}.
\ea
\ee
\ee

Now consider the expression
\ba
r_{v_1, v_{12}} (n_1, n_2) = \sum\limits_{\substack{v_2\ppmod{v_1}\\ v_{14}\ppmod{v_{12}}}} \vb{S(v_1,v_{12},v_2,v_{14})} \e\rb{\frac{n_1v_2}{v_1}+\frac{n_2v_{14}}{v_{12}}}. 
\ea
Since $\vb{S(v_1,v_{12},v_2,v_{14})}$ is multiplicative, we deduce that $r_{v_1, v_{12}}(n_1, n_2)$ is multiplicative with respect to $v_1, v_{12}$, in the sense that if $(u_1u_{12},v_1v_{12}) = 1$, then
\ba
r_{u_1v_1, u_{12}v_{12}}(n_1,n_2) = r_{u_1,u_{12}}(n_1,n_2) r_{v_1,v_{12}}(n_1,n_2).
\ea
Indeed, we see that $r_{u_1v_1, u_{12}v_{12}} (n_1, n_2)$ equals
 \ba
&\sum\limits_{\substack{t_2\ppmod{u_1v_1}\\ t_{14}\ppmod{u_{12}v_{12}}}} \vb{S(u_1v_1, u_{12}v_{12}, t_2, t_{14})} \e\rb{\frac{n_1t_2}{u_1v_1} + \frac{n_2t_{14}}{u_{12}v_{12}}}\\
&\hspace{0.5cm}=\sum\limits_{\substack{u_2\ppmod{u_1}\\ u_{14}\ppmod{u_{12}}}} \sum\limits_{\substack{v_2\ppmod{v_1}\\ v_{14}\ppmod{v_{12}}}} \vb{S(u_1v_1, u_{12}v_{12}, u_1v_2+v_1u_2, u_{12}v_{14}+v_{12}u_{14})}\\
&\hspace{6.7cm}\times\e\rb{\frac{n_1u_2}{u_1} + \frac{n_1v_2}{v_1} + \frac{n_2u_{14}}{u_{12}} + \frac{n_2v_{14}}{v_{12}}}\\
&\hspace{0.5cm}=\sum\limits_{\substack{u_2\ppmod{u_1}\\ u_{14}\ppmod{u_{12}}}} \sum\limits_{\substack{v_2\ppmod{v_1}\\ v_{14}\ppmod{v_{12}}}} \vb{S(u_1, u_{12}, v_1u_2, v_{12} u_{14})} \e\rb{\frac{n_1u_2}{u_1}+\frac{n_2u_{14}}{u_{12}}}\\
&\hspace{5.5cm}\times\vb{S(v_1, v_{12}, u_1v_2, u_{12}v_{14})} \e\rb{\frac{n_1v_2}{v_1}+\frac{n_2v_{14}}{v_{12}}}\\
&\hspace{0.5cm}= r_{u_1, u_{12}}(n_1, n_2) r_{v_1, v_{12}} (n_1, n_2)
\ea
as desired. Also, it is clear from definition that if $(m_1m_2, v_1v_{12}) = 1$, then
\ba
r_{v_1,v_{12}} (m_1n_1,m_2n_2) = r_{v_1,v_{12}} (n_1,n_2).
\ea
Thus we have a decomposition
\ba
r_{v_1, v_{12}} (n_1, n_2) = \prod\limits_p r_{p^{\ord_p (v_1)}, p^{\ord_p (v_{12})}} (p^{\ord_p (n_1)}, p^{\ord_p (n_2)}),
\ea
and it suffices to consider the case where $v_1 = p^{w_1}, v_{12} = p^{w_{12}}, n_1 = p^{e_1}, n_2 = p^{e_2}$. Rewrite the expression:
\begin{multline*}
r_{p^{w_1}, p^{w_{12}}} (p^{e_1}, p^{e_2}) = \sum\limits_{w_2 = 0}^{w_1} \sum\limits_{w_{14}=0}^{w_{12}} \vb{S(p^{w_1}, p^{w_{12}}, p^{w_2}, p^{w_{14}})}\\
\times \sum\limits_{\substack{v_2\ppmod{p^{w_1}}\\ \ord_p (v_2) = w_2}} \sum\limits_{\substack{v_{14}\ppmod{p^{w_{12}}}\\ \ord_p(v_{14}) = w_{14}}} \e(v_2 p^{e_1-w_1} + v_{14} p^{e_2-w_{12}}).
\end{multline*}
Without loss of generality, we may assume $e_1\leq w_1, e_2\leq w_{12}$. Noting that
\ba
\sum\limits_{\substack{v\ppmod{p^w}\\ \ord_p (v) = w'}} \e(vp^{e-w}) = 
\begin{cases}
1 & \text{if } w'=w,\\
p^{w-w'-1}(p-1) & \text{if } w> w' \geq w-e,\\
-p^{w-w'-1} & \text{if } w' = w-e-1,\\
0 & \text{if } w'\leq w-e-2,
\end{cases}
\ea
we see that $r_{p^{w_1}, p^{w_{12}}} (p^{e_1}, p^{e_2})$ can be computed explicitly in terms of powers of $p$. Comparing the coefficients then yields
\begin{multline*}
\sum\limits_{w_1,w_{12}\geq 0} r_{p^{w_1}, p^{w_{12}}} (p^{e_1}, p^{e_2}) p^{-w_1\nu_1-w_{12}\nu_2}\\
= \sigma_{3/2-\nu_1/2-\nu_2, 1/2-\nu_1/2}(p^{e_1}, p^{e_2})(1-p^{1-\nu_1-\nu_2})(1-p^{2-\nu_1-2\nu_2}).
\end{multline*}
Combining the $p$-parts gives
\ba
\sum\limits_{v_1,v_{12}\geq 1} r_{v_1, v_2}(n_1, n_2) v_1^{-\nu_1} v_{12}^{-\nu_2} = \frac{\sigma_{3/2-\nu_1/2-\nu_2, 1/2-\nu_1/2}(n_1, n_2)}{\zeta(\nu_1+\nu_2-1)\zeta(\nu_1+2\nu_2-2)} 
\ea
for $n_1, n_2\neq 0$. As
\ba
\sum\limits_{v_1,v_{12}\geq 1} r_{v_1, v_{12}} (n_1, n_2) v_1^{-\nu_1} v_{12}^{-\nu_2} = \zeta(\nu_1)\zeta(\nu_2) \sum\limits_{v_1,v_{12}\geq 1} R_{v_1, v_{12}} (n_1, n_2) v_1^{-\nu_1} v_{12}^{-\nu_2},
\ea
we finally arrive at 
\ba
\sum\limits_{v_1,v_{12}\geq 1} R_{v_1, v_{12}} (n_1, n_2) v_1^{-\nu_1} v_{12}^{-\nu_2} = \frac{\sigma_{3/2-\nu_1/2-\nu_2, 1/2-\nu_1/2}(n_1, n_2)}{\zeta(\nu_1)\zeta(\nu_2)\zeta(\nu_1+\nu_2-1)\zeta(\nu_1+2\nu_2-2)}. 
\ea

Passing to the degenerate cases, we observe that the generalised divisor sum 
\[
\sigma_{3/2-\nu_1/2-\nu_2, 1/2-\nu_1/2}(n_1, n_2)
\]
reduces to classical divisor sums, and we obtain the following formulae. For $n_1\ne 0$, $n_2=0$ we have
\ba
\sum\limits_{v_1,v_{12}\geq 1}  R_{v_1, v_{12}}(n_1, 0) v_1^{-\nu_1} v_{12}^{-\nu_2} &= \frac{\sigma_{1-\nu_1}(n_1)}{\zeta(\nu_1)}\frac{\zeta(\nu_2-1)}{\zeta(\nu_2)}\frac{\zeta(\nu_1+\nu_2-2)}{\zeta(\nu_1+\nu_2-1)}\frac{\zeta(\nu_1+2\nu_2-3)}{\zeta(\nu_1+2\nu_2-2)}.
\ea
For $n_1=0$, $n_2\ne 0$ we have
\ba
\sum\limits_{v_1,v_{12}\geq 1}  R_{v_1, v_{12}}(0, n_2) v_1^{-\nu_1} v_{12}^{-\nu_2} &= \frac{\sigma_{1-\nu_2}(n_2)}{\zeta(\nu_2)}\frac{\zeta(\nu_1-1)}{\zeta(\nu_1)}\frac{\zeta(\nu_1+\nu_2-2)}{\zeta(\nu_1+\nu_2-1)}\frac{\zeta(\nu_1+2\nu_2-3)}{\zeta(\nu_1+2\nu_2-2)}.
\ea
For $n_1=n_2=0$ we have
\ba
\sum\limits_{v_1,v_{12}\geq 1} R_{v_1, v_{12}}(0,0) v_1^{-\nu_1} v_{12}^{-\nu_2} &= \frac{\zeta(\nu_1-1)}{\zeta(\nu_1)}\frac{\zeta(\nu_2-1)}{\zeta(\nu_2)}\frac{\zeta(\nu_1+\nu_2-2)}{\zeta(\nu_1+\nu_2-1)}\frac{\zeta(\nu_1+2\nu_2-3)}{\zeta(\nu_1+2\nu_2-2)}.
\ea
This completes the proof of \Cref{prp:Sp4rs}.
\end{proof}

\section{Fourier coefficients of eisenstein series} \label{section:Fourier_coeff}

\subsection{Invariant differential operators}

Consider the Siegel upper half-space of degree $2$:
\ba
H_2 = \cbm{Z = X+iY\in M_2(\C)}{Y>0}.
\ea
If we write
\ba
Z &= \bp Z_1 & Z_2 \\ Z_2 & Z_3 \ep, & \quad Z_j &= X_j + iY_j, & \quad j&=1,2,3,
\ea
then the generators $\Delta_1, \Delta_2$ of $\Sp(4,\R)$-invariant differential operators on $H_2$ are given in \cite{Niwa1991} by
\ba
\Delta_1 &= \sum\limits_{i,j=1}^3 Y_i Y_j \partial_i \ol\partial_j - D\rb{\partial_1\ol\partial_3 + \ol\partial_1\partial_3 - \frac{1}{2} \partial_2\ol{\partial_2}},\\
\Delta_2 &= D^2\rb{\partial_1\partial_3-\frac{1}{4}\partial_2^2}\rb{\ol\partial_1\ol\partial_3-\frac{1}{4}\ol\partial_2^2} + \frac{i}{4} D\bigg(\sum\limits_{i=1}^3 Y_i\partial_i\bigg)\rb{\ol\partial_1\ol\partial_3-\frac{1}{4}\ol\partial_2^2}\\
 &\hspace{0.5cm}+ \frac{i}{4} D\bigg(\sum\limits_{i=1}^3 Y_i\ol\partial_i\bigg)\rb{\partial_1\partial_3-\frac{1}{4}\partial_2^2} + \frac{1}{16} D\rb{\partial_1\ol\partial_3 + \ol\partial_1\partial_3-\frac{1}{2} \partial_2\ol\partial_2},
\ea
where $D = Y_1Y_3-Y_2^2$, and for $j=1,2,3$,
\ba
\partial_j = \pd{}{Z_j} &= \frac{1}{2}\rb{\pd{}{X_j} - i\pd{}{Y_j}}, \quad & \quad \ol\partial_j = \pd{}{\ol Z_j} &= \frac{1}{2}\rb{\pd{}{X_j} + i\pd{}{Y_j}}.
\ea

Through the isomorphism
\ba
gK \mapsto g\bp i\\&i\ep \quad \text{(symplectic transformation)}
\ea
from $G/K$ to $H_2$, we can consider $\Delta_1, \Delta_2$ as differential operators on $G/K$. It is straightforward to verify that $I_0(g, \nu) = y_1^{\nu_1+2} y_2^{2\nu_2-\nu_1+1}$ is an eigenfunction for $\Delta_1$ and $\Delta_2$, with eigenvalues given by
\ba
\lambda_{\Delta_1} &= \frac{1}{16} (2\nu_1^2-4\nu_1\nu_2+4\nu_2^2-5),\\
\lambda_{\Delta_2} &= \frac{1}{256}(\nu_1^2-2\nu_1\nu_2-2)(\nu_1-2\nu_2-2)(\nu_1+2).
\ea

\subsection{Jacquet's Whittaker functions} \label{section:Whittaker}

It is easily verified that a character $\chi$ on $U(\Z)\bs U(\R)$ has the form
\begin{equation}\label{eq:cd} 
\chi\bp 1 & n_1 & n_2 & n_3\\ & 1& n_4 & n_5\\ && 1\\ &&-n_1 & 1\ep = \e(m_1n_1+m_2n_5)
\end{equation}
for some $m_1,m_2\in\Z$. We shall denote such a character by $\chi_{m_1,m_2}$. 

Now we consider functions $F$ on $G/K$ satisfying the following properties:
\ber
\item $F$ is an eigenfunction for $\Delta_1$ and $\Delta_2$, with the same eigenvalues as $I_0(g,\nu)$;
\item $F(\eta g) = \chi(\eta) F(g)$ for all $\eta\in U(\R)$.
\ee
The space of functions satisfying (i) and (ii) is denoted by $\mc W(\nu,\chi)$. Since $\Delta_1, \Delta_2$ are $\Sp(4,\R)$-invariant differential operators, it follows that for every $w\in W$, $I_0(wg, \nu)$ is also an eigenfunction for $\Delta_1$ and $\Delta_2$ with the same eigenvalues as $I_0(g, \nu)$. For $w\in W$, if the character $\chi$ is trivial on $\ol U_w(\R)$, we define
\begin{align}\label{eq:wd} 
W_w (g, \nu, \chi) := \int_{U_w(\R)} I_0(w \eta g, \nu) \bar\chi(\eta) d\eta \in \mc W(\nu, \chi).
\end{align}
The functions $W_w (g, \nu, \chi)$ are known as \emph{Jacquet's Whittaker functions}; their properties are studied extensively in \cite{Hashizume1982,Jacquet1967,Kostant1978}. If $\chi$ is not trivial on $\ol U_w(\R)$, then we define $W_w (g, \nu, \chi) := 0$. Using the standard Iwasawa decomposition for $\Sp(4)$, we obtain explicit formulae for $W_w(g,\nu,\chi)$ for $g$ of the form \eqref{eq:g_iwasawa}, and $\chi = \chi_{m_1,m_2}$.

\ben
\item $w=\id$: It is easy to see that
\[
W_{\id} (g,\nu,\chi) = y_1^{\nu_1+2} y_2^{2\nu_2-\nu_1+1}
\]
if $m_1=m_2=0$, and $W_{\id} (g,\nu,\chi)=0$ otherwise.

\item $w=s_\alpha$: We compute
\begin{equation}\label{eq:a_W}
W_{s_\alpha}(g,\nu,\chi) = y_1^{\nu_1+2} y_2^{\nu_1+2} \int_\R (n_1^2y_2^2+y_1^2)^{\nu_2-\nu_1-1/2} \e(-m_1n_1) dn_1
\end{equation}
if $m_2=0$, and $W_{s_\alpha}(g,\nu,\chi) = 0$ otherwise.

\item $w=s_\beta$: We compute
\begin{equation}\label{eq:b_W}
W_{s_\beta} (g,\nu,\chi) = y_1^{\nu_1+2} y_2^{2\nu_2-\nu_1+1} \int_\R (y_2^4+n_5^2)^{\nu_1/2-\nu_2-1/2} \e(-m_2n_5) dn_5
\end{equation}
if $m_1=0$, and $W_{s_\beta} (g,\nu,\chi) = 0$ otherwise.

\item $w=s_\alpha s_\beta$: We compute
\begin{equation}\label{eq:ab_W}
\begin{aligned}
W_{s_\alpha s_\beta}(g,\nu,\chi) &= y_1^{\nu_1+2} y_2^{\nu_1+2} \int_\R \int_\R (y_2^4+n_5^2)^{\nu_1/2-\nu_2-1/2}\\ &\hspace{0.5cm}\times \rb{(y_2^4+n_5^2)y_1^2 + y_2^2n_4^2}^{\nu_2-\nu_1-1/2}\e(-m_2n_5) dn_4 dn_5
\end{aligned}
\end{equation}
if $m_1=0$, and $W_{s_\alpha s_\beta}(g,\nu,\chi) = 0$ otherwise.

\item $w=s_\beta s_\alpha$: We compute
\begin{equation}\label{eq:ba_W}
\begin{aligned}
W_{s_\beta s_\alpha}(g,\nu,\chi) &= y_1^{\nu_1+2} y_2^{\nu_1+2} \int_\R \int_\R (n_1^2y_2^2+y_1^2)^{\nu_2-\nu_1-1/2}\\
&\hspace{0.5cm} \times \rb{n_2^2+(n_1^2y_2^2+y_1^2)^2}^{\nu_1/2-\nu_2-1/2}\e(-m_1n_1) dn_1 dn_2
\end{aligned}
\end{equation}
if $m_2=0$, and $W_{s_\beta s_\alpha} (g,\nu,\chi) = 0$ otherwise.

\item $w=s_\alpha s_\beta s_\alpha$: We compute
\begin{equation}\label{eq:aba_W}
\begin{aligned}
W_{s_\alpha s_\beta s_\alpha} (g,\nu,\chi) &= y_1^{\nu_1+2} y_2^{\nu_1+2} \int_\R \int_\R \int_\R \rb{(y_1^2+n_1^2y_2^2)^2+(n_2+n_1n_4)^2}^{\nu_1/2-\nu_2-1/2}\\
&\hspace{0.5cm}\times (y_1^4y_2^2+n_2^2y_2^2+n_1^2y_1^2y_2^4+n_4^2y_1^2)^{\nu_2-\nu_1-1/2} \e(-m_1n_1) dn_1 dn_2 dn_4
\end{aligned}
\end{equation}
if $m_2=0$, and $W_{s_\alpha s_\beta s_\alpha} (g,\nu,\chi) = 0$ otherwise.

\item $w=s_\beta s_\alpha s_\beta$: We compute
\begin{equation}\label{eq:bab_W}
\begin{aligned}
W_{s_\beta s_\alpha s_\beta} (g,\nu,\chi) &= y_1^{\nu_1+2} y_2^{\nu_1+2} \int_\R \int _\R \int_\R (y_1^2 y_2^4+n_5^2 y_1^2+n_4^2 y_2^2)^{\nu_2-\nu_1-1/2}\\
&\hspace{0.5cm}\times\big(y_1^4 y_2^4+n_5^2 y_1^4 +2 n_4^2 y_1^2 y_2^2 +(n_1 n_4- n_2)^2 y_2^4\\
&\hspace{1cm}+(n_2n_5 - n_4^2 - n_1n_4n_5)^2\big)^{\nu_1/2-\nu_2-1/2} \e(-m_2n_5) dn_2 dn_4 dn_5
\end{aligned}
\end{equation}
if $m_1=0$, and $W_{s_\beta s_\alpha s_\beta}(g,\nu,\chi) = 0$ otherwise.

\item $w=w_0$: We compute
\begin{multline}
W_{w_0} \rb{g,\nu,\chi} = y_1^{\nu_1+2}y_2^{\nu_1+2}\int_\R \int_\R \int_\R \int_\R (n_1^2y_1^2y_2^4+y_1^4y_2^2+n_3^2y_1^2+n_2^2y_2^2)^{\nu_2-\nu_1-1/2}\\
\times(n_1^2n_4^2y_2^4+y_1^4y_2^4-2n_1n_5n_4y_1^2y_2^2-2n_1n_2n_4y_2^4 +n_5^2y_1^4+2n_3n_4y_1^2y_2^2+n_2^2y_2^4\\
+n_2^2n_5^2-2n_3n_2n_5n_4+n_3^2n_4^2)^{\nu_1/2-\nu_2-1/2}\e(-m_1n_1-m_2n_5) dn_1 dn_2 dn_4 dn_5.
\end{multline}
\ee

With the exception of the long element $w = w_0$, $W_w$ can be expressed in terms of the classical Whittaker function
\ba
W(y, \nu, \psi) = \int_\R \rb{\frac{y}{y^2+u^2}}^{\nu+\frac 12} \bar\psi(u) du,
\ea
where $\psi = \psi_t(u) = \e(tu)$ for $t\in\R$ is an additive character of $\R$. 

\begin{prp}
We have
\ba
W_{\id} (g,\nu,\chi_{0,0}) &= y_1^{\nu_1+2} y_2^{2\nu_2-\nu_1+1},\\
W_{s_\alpha} (g,\nu,\chi_{m_1,0}) &= y_1^{\nu_2+3/2} y_2^{\nu_1+1} W(y_1, \nu_1-\nu_2, \psi_{m_1/y_2}),\\
W_{s_\beta} (g,\nu,\chi_{0,m_2}) &= y_1^{\nu_1+2} W\rb{y_2^2, \nu_2-\frac{\nu_1}{2}, \psi_{m_2}},\\
W_{s_\alpha s_\beta} (g,\nu,\chi_{0,m_2}) &= y_1^{2\nu_2-\nu_1+2} B\rb{\frac{1}{2}, \nu_1-\nu_2} W\rb{y_2^2, \frac{\nu_1}{2}, \psi_{m_2}},\\
W_{s_\beta s_\alpha} (g,\nu,\chi_{m_1,0}) &= y_1^{\nu_1-\nu_2+3/2} y_2^{\nu_1+1} B\rb{\frac{1}{2}, \nu_2-\frac{\nu_1}{2}} W(y_1, \nu_2, \psi_{m_1/y_2}),\\
W_{s_\alpha s_\beta s_\alpha} (g,\nu,\chi_{m_1,0}) &= y_1^{\nu_2-\nu_1+3/2} y_2^{2\nu_2-\nu_1+1} B\rb{\frac{1}{2}, \frac{\nu_1}{2}} B\rb{\frac{1}{2}, \nu_1-\nu_2} W(y_1, \nu_2, \psi_{m_1/y_2}),\\
W_{s_\beta s_\alpha s_\beta}(g,\nu,\chi_{0,m_2}) &= y_1^{\nu_1-2\nu_2+2} B\rb{\frac{1}{2}, \nu_2-\frac{\nu_1}{2}} B\rb{\frac{1}{2}, \nu_2} W\rb{y_2^2, \frac{\nu_1}{2}, \psi_{m_2}},
\ea
where $B(x,y)$ is the beta function (see \eqref{eq:beta}).
\end{prp}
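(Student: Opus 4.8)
The plan is to evaluate, one Weyl element at a time, the explicit multiple integrals for $W_w$ written out above, peeling off the non-oscillatory integrations by means of the beta-integral of \Cref{GR3.251} and recognising the single surviving oscillatory integration as the classical Whittaker function $W(y,\nu,\psi)$. Throughout, the governing identity is that, after a scaling substitution, an integral of the form $\int_\R (y^2+u^2)^{-\nu} e(-tu)\,du$ equals $y^{-\nu}W(y,\nu,\psi_t)$, so the exponent appearing in the Whittaker function is exactly the negative of the exponent carried by the relevant sum of squares; every other integration is stripped using \Cref{GR3.251} and contributes a beta factor $B(\tfrac12,\cdot)$.

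The rank-one and length-two cases are immediate. For $w=s_\alpha$ and $w=s_\beta$ there is a single integral, and the substitution $u=n_1 y_2$ (resp.\ $u=n_5$) turns it directly into a classical Whittaker integral; collecting the powers of $y_1,y_2$ produces the stated monomials. For $w=s_\alpha s_\beta$ and $w=s_\beta s_\alpha$ exactly one of the two integration variables (namely $n_4$, resp.\ $n_2$) occurs in only one of the two sum-of-squares factors; that variable integrates out via \Cref{GR3.251} to give the single beta factor $B(\tfrac12,\nu_1-\nu_2)$ (resp.\ $B(\tfrac12,\nu_2-\tfrac{\nu_1}{2})$), after which the exponents on the remaining factor combine and the last, oscillatory, integral is the Whittaker function. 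I would carry out these stripping steps mechanically, keeping a running bookkeeping of the powers of $y_1$ and $y_2$.

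The genuine work is in the two length-three elements $w=s_\alpha s_\beta s_\alpha$ and $w=s_\beta s_\alpha s_\beta$, where both inner variables are entangled across the two factors and no variable decouples at the outset. Here I would introduce the linear change of variables that aligns one variable with the rank-one factor --- for $s_\alpha s_\beta s_\alpha$ this is $p=n_2+n_1n_4$, so that the factor $\big((y_1^2+n_1^2y_2^2)^2+(n_2+n_1n_4)^2\big)$ becomes a clean $\big((y_1^2+n_1^2y_2^2)^2+p^2\big)$ --- and then integrate out the transverse variable ($n_4$) first. Completing the square in $n_4$ inside the second factor reduces that integral to a single application of \Cref{GR3.251}, and the decisive point is that the resulting effective argument $\widetilde B$ factors as $\widetilde B=\tfrac{y_1^2 y_2^2}{y_1^2+n_1^2y_2^2}\big[(y_1^2+n_1^2y_2^2)^2+p^2\big]$, i.e.\ it is proportional to the very same sum of squares $(a^2+p^2)$ carried by the first factor, with $a=y_1^2+n_1^2y_2^2$. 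This algebraic coincidence is what lets the subsequent $p$-integration again be a single beta integral (yielding $B(\tfrac12,\tfrac{\nu_1}{2})$), leaving only $\int_\R(y_1^2+n_1^2y_2^2)^{-\nu_2-1/2}e(-t_1 n_1)\,dn_1$, which the substitution $u=n_1y_2$ turns into $W(y_1,\nu_2+\tfrac12,\psi_{t_1/y_2})$. The case $s_\beta s_\alpha s_\beta$ is handled by the mirror computation with the roles of the two simple roots interchanged.

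I expect the main obstacle to be precisely this decoupling of the length-three integrals: one must choose the correct order of integration and the correct linear substitution, and then verify that completing the square produces an argument proportional to the partner factor. Getting this alignment wrong leaves a genuinely coupled two-dimensional integral to which \Cref{GR3.251} does not apply. Once the structure above is in place the remainder is routine: the final power-counting of $y_1,y_2$ must reproduce the stated monomials (a useful consistency check at each stage), and one notes that this reduction breaks down only for the long element $s_\alpha s_\beta s_\alpha s_\beta$, whose integral is a genuine two-variable Whittaker function and is therefore, correctly, omitted from the statement.
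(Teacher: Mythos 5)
Your proposal is correct and is essentially the computation the paper intends: the paper states this proposition without proof, immediately after displaying the explicit iterated integrals for $W_w$ and the beta-integral lemma (Gradshteyn--Ryzhik 3.251.2), and the evident route is exactly the one you describe. Your key algebraic claim for the length-three elements checks out --- for $s_\alpha s_\beta s_\alpha$, completing the square in $n_4$ after the substitution $p=n_2+n_1n_4$ does yield the constant $\tfrac{y_1^2y_2^2}{y_1^2+n_1^2y_2^2}\bigl[(y_1^2+n_1^2y_2^2)^2+p^2\bigr]$, and the analogous manipulation for $s_\beta s_\alpha s_\beta$ (substituting $q=n_1n_4-n_2$ and completing the square in $q$) produces $\tfrac{(y_1^2y_2^4+n_5^2y_1^2+n_4^2y_2^2)^2}{y_2^4+n_5^2}$, so both cases reduce to two successive applications of the beta integral followed by the classical Whittaker integral, with the powers of $y_1,y_2$ matching the stated monomials.
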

\begin{proof}
We start with the explicit formulae above and simplify the integrals. 
\ben
\item The statement for $w=\id$ is obvious.
\item For $W_{s_\alpha}$, we start with \eqref{eq:a_W}. A change of variables $n_1 y_2 \mapsto n'_1$ gives
\ba
W_{s_\alpha} (g,\nu,\chi_{m_1,0}) &= y_1^{\nu_1+2} y_2^{\nu_1+1} \int_\R ({n'_1}^2+y_1^2)^{\nu_2-\nu_1-1/2} \e\rb{-\frac{m_1}{y_2} n'_1} dn'_1\\
&= y_1^{\nu_2+3/2} y_2^{\nu_1+1} \int_\R \rb{\frac{y_1}{{n'_1}^2+y_1^2}}^{\nu_2-\nu_1-1/2} \e\rb{-\frac{m_1}{y_2} n'_1} dn'_1\\
&= y_1^{\nu_2+3/2} y_2^{\nu_1+1} W(y_1, \nu_1-\nu_2, \chi_{m_1/y_2}).
\ea

\item For $W_{s_\beta}$, we start with \eqref{eq:b_W}, and rewrite
\ba
W_{s_\beta}(g,\nu,\chi_{0,m_2}) &= y_1^{\nu_1+2} y_2^{2\nu_2-\nu_1+1} \int_\R (y_2^4+n_5^2)^{\nu_1/2-\nu_2-1/2} \e(-m_2n_5) dn_5\\
&= y_1^{\nu_1+2} \int_\R \rb{\frac{y_2^2}{y_2^4+n_5^2}}^{\nu_2-\nu_1/2+1/2} \e(-m_2n_5) dn_5\\
&= y_1^{\nu_1+2} W\rb{y_2^2, \nu_2-\frac{\nu_1}{2}, \chi_{m_2}}.
\ea

\item For $W_{s_\alpha s_\beta}$, we start with \eqref{eq:ab_W}. A change of variables $n_4y_2 \mapsto n'_4$ gives
\ba
W_{s_\alpha s_\beta} (g,\nu,\chi_{0,m_2}) &= y_1^{\nu_1+2} y_2^{\nu_1+1} \int_\R \int_\R (y_2^4+n_5^2)^{\nu_1/2-\nu_2-1/2} ((y_2^4+n_5^2)y_1^2+{n'_4}^2)^{\nu_2-\nu_1-1/2}\\
&\hspace{8.1cm} \times \e(-m_2n_5) dn'_4 dn_5\\
&= y_1^{2\nu_2-\nu_1+2} y_2^{\nu_1+1} B\rb{\frac{1}{2}, \nu_1-\nu_2} \int_\R (y_2^4+n_5^2)^{-\nu_1/2-1/2}\e(-m_2n_5) dn_5\\
&= y_1^{2\nu_2-\nu_1+2} B\rb{\frac{1}{2}, \nu_1-\nu_2} \int_\R \rb{\frac{y_2^2}{y_2^4+n_5^2}}^{\nu_1/2+1/2}\e(-m_2n_5) dn_5\\
&= y_1^{2\nu_2-\nu_1+2} B\rb{\frac{1}{2}, \nu_1-\nu_2} W\rb{y_2^2, \frac{\nu_1}{2}, \chi_{m_2}}.
\ea

\item For $W_{s_\beta s_\alpha}$, we start with \eqref{eq:ba_W}, and rewrite
\begin{align*}
W_{s_\beta s_\alpha} (g,\nu,\chi_{m_1,0}) &= y_1^{\nu_1+2} y_2^{\nu_1+2} \int_\R \int_\R (n_1^2y_2^2+y_1^2)^{\nu_2-\nu_1-1/2} \\
&\hspace{2.4cm} \times (n_2^2+(n_1^2y_2^2+y_1^2)^2)^{\nu_1/2-\nu_2-1/2} \e(-m_1n_1) dn_1 dn_2\\
&= y_1^{\nu_1+2} y_2^{\nu_1+2} B\rb{\frac{1}{2}, \nu_2-\frac{\nu_1}{2}} \int_\R (n_1^2y_2^2+y_1^2)^{-\nu_2-1/2} \e(-m_1n_1) dn_1.
\end{align*}
A change of variables $n_1 y_2 \mapsto n'_1$ gives
\ba
&y_1^{\nu_1+2} y_2^{\nu_1+1} B\rb{\frac{1}{2}, \nu_2-\frac{\nu_1}{2}} \int_\R ({n'_1}^2+y_1^2)^{-\nu_2-1/2} \e\rb{-\frac{m_1}{y_2}n'_1} dn'_1\\
&= y_1^{\nu_1-\nu_2+3/2} y_2^{\nu_1+1} B\rb{\frac{1}{2}, \nu_2-\frac{\nu_1}{2}} \int_\R \rb{\frac{y_1}{{n'_1}^2+y_1^2}}^{\nu_2+1/2} \e\rb{-\frac{m_1}{y_2}n'_1} dn'_1\\
&= y_1^{\nu_1-\nu_2+3/2} y_2^{\nu_1+1} B\rb{\frac{1}{2}, \nu_2-\frac{\nu_1}{2}} W(y_1, \nu_2, \chi_{m_1/y_2}). 
\ea
\item For $W_{s_\alpha s_\beta s_\alpha}$, we start with \eqref{eq:aba_W}. A change of variables $n_2+n_1n_4 \mapsto n'_2$ gives
\ba
&W_{s_\alpha s_\beta s_\alpha}(g,\nu, \chi_{m_1,0}) = y_1^{\nu_1+2} y_2^{\nu_1+2} \int_\R \int_\R \int_\R \rb{(y_1^2+n_1^2y_2^2)^2 + {n'_2}^2}^{\nu_1/2-\nu_2-1/2}\\
&\hspace{0.5cm}\times(y_1^4y_2^2+{n'_2}^2y_2^2-2n_1n'_2n_4y_2^2+n_1^2n_4^2y_2^2+n_1^2y_1^2y_2^4+n_4^2y_1^2)^{\nu_2-\nu_1-1/2} \e(-m_1n_1) dn_1 dn'_2 dn_4.
\ea
Completing the square with respect to $n_4$ followed by a change of variables $n_4-\frac{n_1n'_2y_2^2}{(n_1^2y_2^2+y_1^2)} \mapsto n'_4$ gives
\ba
&y_1^{\nu_1+2} y_2^{\nu_1+2} \int_\R \int_\R \int_\R \rb{(y_1^2+n_1^2y_2^2)^2 + {n'_2}^2}^{\nu_1/2-\nu_2-1/2}(n_1^2y_2^2+y_1^2)^{\nu_2-\nu_1-1/2} \\
&\hspace{2.6cm} \times \rb{{n'_4}^2+\frac{y_1^2y_2^2((n_1^2y_2^2+y_1^2)^2+{n'_2}^2)}{(n_1^2y_2^2+y_1^2)^2}}^{\nu_2-\nu_1-1/2} \e(-m_1n_1) dn_1 dn'_2 dn'_4\\
&= y_1^{2\nu_2-\nu_1+2} y_2^{2\nu_2-\nu_1+2} B\rb{\frac{1}{2}, \nu_1-\nu_2} \int_\R \int_\R \rb{(y_1^2+n_1^2y_2^2)^2 + {n'_2}^2}^{-\nu_1/2-1/2}\\
&\hspace{7.2cm} \times (n_1^2y_2^2+y_1^2)^{\nu_1-\nu_2-1/2} \e(-m_1n_1) dn_1 dn'_2\\
&= y_1^{2\nu_2-\nu_1+2} y_2^{2\nu_2-\nu_1+2} B\rb{\frac{1}{2}, \frac{\nu_1}{2}} B\rb{\frac{1}{2}, \nu_1-\nu_2} \int_\R (n_1^2y_2^2+y_1^2)^{-\nu_2-1/2} \e(-m_1n_1) dn_1.
\ea
A change of variables $n_1 y_2 \mapsto n'_1$ then gives
\ba
&y_1^{2\nu_2-\nu_1+2} y_2^{2\nu_2-\nu_1+1} B\rb{\frac{1}{2}, \frac{\nu_1}{2}} B\rb{\frac{1}{2}, \nu_1-\nu_2} \int_\R ({n'_1}^2+y_1^2)^{-\nu_2-1/2} \e\rb{-\frac{m_1}{y_2}n'_1} dn'_1\\
&= y_1^{\nu_2-\nu_1+3/2} y_2^{2\nu_2-\nu_1+1} B\rb{\frac{1}{2}, \frac{\nu_1}{2}} B\rb{\frac{1}{2}, \nu_1-\nu_2} \int_\R \rb{\frac{y_1}{{n'_1}^2+y_1^2}}^{\nu_2+1/2} \e\rb{-\frac{m_1}{y_2}n'_1} dn'_1\\
&= y_1^{\nu_2-\nu_1+3/2} y_2^{2\nu_2-\nu_1+1} B\rb{\frac{1}{2}, \frac{\nu_1}{2}} B\rb{\frac{1}{2}, \nu_1-\nu_2} W(y_1, \nu_2, \chi_{m_1/y_2}). 
\ea

\item For $W_{s_\beta s_\alpha s_\beta}$, we start with \eqref{eq:bab_W}. A change of variables $n'_2 = n_2-n_1n_4$ gives
\ba
&W_{s_\beta s_\alpha s_\beta} (g,\nu,\chi_{0,m_2}) = y_1^{\nu_1+2} y_2^{\nu_1+2} \int_\R \int _\R \int_\R (y_1^2 y_2^4+n_5^2 y_1^2+n_4^2 y_2^2)^{\nu_2-\nu_1-1/2}\\
&\hspace{0.5cm}\times\rb{y_1^4 y_2^4+n_5^2 y_1^4 +2 n_4^2 y_1^2 y_2^2 +{n'_2}^2 y_2^4 +(n'_2n_5 - n_4^2)^2}^{\nu_1/2-\nu_2-1/2} \e(-m_2n_5) dn'_2 dn_4 dn_5.
\ea
Completing the square with respect to $n'_2$ followed by a change of variables $n'_2-\frac{n_4^2n_5}{y_2^4+n_5^2} \mapsto n''_2$ gives
\ba
&y_1^{\nu_1+2} y_2^{\nu_1+2} \int_\R \int _\R \int_\R (y_1^2 y_2^4+n_5^2 y_1^2+n_4^2 y_2^2)^{\nu_2-\nu_1-1/2}(y_2^4+n_5^2)^{\nu_1/2-\nu_2-1/2}\\
&\hspace{1.6cm}\times \rb{{n''_2}^2+\rb{\frac{y_1^2y_2^4+n_5^2y_1^2+n_4^2y_2^2}{y_2^4+n_5^2}}^2}^{\nu_1/2-\nu_2-1/2} \e(-m_2n_5)dn''_2 dn_4 dn_5\\
&= y_1^{\nu_1+2} y_2^{\nu_1+2} B\rb{\frac{1}{2}, \nu_2-\frac{\nu_1}{2}} \int_\R \int _\R (y_2^4+n_5^2)^{\nu_2-\nu_1/2-1/2}\\
&\hspace{5cm}\times(y_1^2 y_2^4+n_5^2 y_1^2+n_4^2 y_2^2)^{-\nu_2-1/2} \e(-m_2n_5) dn_4 dn_5.
\ea
A change of variables $n_4y_2 \mapsto n'_4$ then gives
\begin{align*}
&y_1^{\nu_1+2} y_2^{\nu_1+2} B\rb{\frac{1}{2}, \nu_2-\frac{\nu_1}{2}} \int_\R \int _\R (y_2^4+n_5^2)^{\nu_2-\nu_1/2-1/2}\\
&\hspace{5.2cm}\times(y_1^2 y_2^4+n_5^2 y_1^2+n_4^2 y_2^2)^{-\nu_2-1/2} \e(-m_2n_5) dn'_4 dn_5\\
&= y_1^{\nu_1-2\nu_2+2} y_2^{\nu_1+1} B\rb{\frac{1}{2}, \nu_2-\frac{\nu_1}{2}} B\rb{\frac{1}{2}, \nu_2} \int_\R (y_2^4+n_5^2)^{-\nu_1/2-1/2} \e(-m_2n_5) dn_5\\
&= y_1^{\nu_1-2\nu_2+2} B\rb{\frac{1}{2}, \nu_2-\frac{\nu_1}{2}} B\rb{\frac{1}{2}, \nu_2} \int_\R \rb{\frac{y_2^2}{y_2^4+n_5^2}}^{\nu_1/2+1/2} \e(-m_2n_5) dn_5\\
&= y_1^{\nu_1-2\nu_2+2} B\rb{\frac{1}{2}, \nu_2-\frac{\nu_1}{2}} B\rb{\frac{1}{2}, \nu_2} W\rb{y_2^2, \frac{\nu_1}{2}, \chi_{m_2}}.\qedhere
\end{align*}
\ee
\end{proof}

Let $\mc W(\nu,\chi)^{\operatorname{mod}}$ denote the subspace of functions with moderate growth in $\mc W(\nu,\chi)$. When $\chi$ is non-degenerate (i.e. $m_1m_2\ne 0$), we have the following celebrated theorem of Shalika and Wallach.

\begin{thm}[{Shalika \cite{Shalika1974}, Wallach \cite{Wallach1983}}]\label{multiplicity1}
 Let $\chi$ be a non-degenerate character on $U(\Z) \bs U(\R)$. Then $\mc W(\nu, \chi)^{\operatorname{mod}} \leq 1$. 
\end{thm}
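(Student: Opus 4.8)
The statement is the uniqueness of moderate-growth Whittaker models for $\Sp(4,\R)$, and the plan is to reduce it to an asymptotic analysis of the holonomic system satisfied by elements of $\mc W(\nu,\chi)$, following the strategy of Shalika and Wallach. First I would use the Iwasawa decomposition $G = N_0 A K$ with $A = \cbm{\diag\rb{y_1,y_2,y_1^{-1},y_2^{-1}}}{y_1,y_2>0}$: by right $K$-invariance and the equivariance property (ii), any $F \in \mc W(\nu,\chi)$ is determined by its restriction $f(y_1,y_2)$ to $A$. The eigenvalue equations (i) for $\Delta_1$ and $\Delta_2$ then descend to a system of two partial differential equations for $f$, in which the non-degenerate character $\chi = \chi_{t_1,t_5}$ (so $t_1 \ne 0$ and $t_5 \ne 0$) enters through the lower-order terms coming from the unipotent variables.

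Next I would analyse this as a holonomic system on the torus. Its full solution space has dimension $|W| = 8$, matching the dimension of $\mc W(\nu,\chi)$ recorded above and the eight functions $W_w$ constructed via \eqref{Whittaker_def}. The crucial feature is that, because $\chi$ is non-degenerate, the loci $y_1 \to \infty$ and $y_2 \to \infty$ are \emph{irregular} singularities of the system: the contribution of $d\chi$ forces exponential factors into the formal solutions. I would then pass to the indicial and asymptotic theory at these irregular points and sort the eight solutions by growth type, showing that along each of the two independent directions the exponentially decaying solutions span a proper subspace while the remaining ones grow exponentially.

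The conclusion follows by intersecting growth conditions. A function of moderate growth on $G$ restricts to a function on $A$ that is polynomially bounded in $y_1^{\pm1}, y_2^{\pm1}$, so it cannot contain any exponentially growing component in either direction; hence $f$ must lie in the intersection of the two recessive subspaces. Non-degeneracy of $\chi$ makes this intersection at most one-dimensional, and it is in fact spanned by the long-element Jacquet integral $W_{s_\alpha s_\beta s_\alpha s_\beta}$ --- the unique one of the eight functions $W_w$ that does not vanish identically for non-degenerate $\chi$, and the one expressible through the exponentially decaying classical Whittaker functions $W(y,\nu,\psi)$. This yields $\dim \mc W(\nu,\chi)^{\operatorname{mod}} \le 1$.

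The main obstacle is the rigorous asymptotic analysis of the coupled two-variable holonomic system: establishing that moderate growth forces recessiveness in both directions simultaneously, and that the jointly recessive solution space is exactly one-dimensional. This is precisely the analytic heart of the Shalika--Wallach theorem; rather than redevelop the theory of asymptotic expansions of matrix coefficients --- equivalently, the structure of the $\mf n$-homology of the principal series --- I would invoke their results to close this step, the algebraic count $\dim = |W|$ being the theorem of Kostant on Whittaker vectors and the moderate-growth selection being Wallach's asymptotic refinement.
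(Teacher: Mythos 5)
The paper does not actually prove this theorem: it is imported verbatim from Shalika and Wallach, with only the citations supplied. Your outline is a faithful sketch of how their argument goes (Kostant's $|W|=8$-dimensional count of Whittaker vectors, irregular-singularity asymptotics of the holonomic system on the torus, and moderate growth selecting the recessive solution, realised by the long-element Jacquet integral), but since you explicitly invoke the same references to close the analytic core, your proposal and the paper ultimately rest on exactly the same external input.
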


\begin{rmk}
For a non-degenerate character $\chi$, we observe that $W_{w_0}(g,\nu,\chi) \in \mc W(\nu,\chi)^{\operatorname{mod}}$. By \Cref{multiplicity1}, we see that $W_{w_0}(g,\nu,\chi)$ is the unique function (up to a constant multiple) in $\mc W(\nu,\chi)^{\operatorname{mod}}$. This function is studied extensively by Ishii \cite{Ishii2005}.
\end{rmk}

\subsection{Fourier Coefficients}

Suppose that $\chi = \chi_{m_1,m_2}$ is a character of $U(\Z)\bs U(\R)$. Then the Fourier coefficient for the minimal Eisenstein series $E_0(g,\nu)$ corresponding to $\chi$ is given by (see \cite{Shahidi2010})
\ba
E_{0,\chi}(g,\nu) := \int_{U(\Z)\bs U(\R)} E_0(\eta g,\nu) \bar\chi(\eta) d\eta.
\ea

\begin{rmk}
In principle, one may consider the Fourier coefficients along other subgroups. For example, for Siegel modular forms, one usually considers the Fourier coefficients along the upper right block, which forms an abelian group. Here, we consider the Fourier coefficients along the unipotent part $U$ of $G$. These Fourier coefficients find applications for instance in the constructions of L-functions via the Langlands-Shahidi method \cite{Shahidi2010}. 
\end{rmk}

To compute the Fourier coefficients $E_{0,\chi}(g,\nu)$, we break down the expression via Bruhat decomposition, and express them in terms of Whittaker functions. We have
\ba
E_{0,\chi}(g, \nu) &= \sum\limits_{w\in W} \int_{U(\Z)\bs U(\R)} E_{0,w}(\eta g, \nu) \ol\chi(\eta) d\eta\\
&= \sum\limits_{w\in W} \sum\limits_{\gamma \in R_w} \sum\limits_{\delta \in \Gamma_w} \int_{U(\Z)\bs U(\R)} I_0(\gamma \delta \eta g, \nu) \ol\chi(\eta) d\eta\\
&= \sum\limits_{w\in W} \sum\limits_{\gamma \in R_w} \int_{\ol U_w(\Z) \bs \ol U_w(\R)} \int_{U_w(\R)} I_0(\gamma \eta \eta' g, \nu) \ol\chi(\eta \eta') d\eta d\eta'.
\ea
Let $\gamma = b_1 w t b_2$ be a Bruhat decomposition, with $b_1, b_2 \in U$, $t\in T$. We may assume that $b_2 \in U_w$. Then
\ba
E_{0,\chi}(g,\nu) = \sum\limits_{w\in W} \sum\limits_{\substack{\gamma \in R_w\\\gamma = b_1 w t b_2}} \int_{\ol U_w(\Z) \bs \ol U_w(\R)} \int_{U_w(\R)} I_0(b_1 w t b_2 \eta \eta' g, \nu) \ol\chi(\eta \eta') d\eta d\eta'.
\ea
The change of variables $b_2 \eta \mapsto \eta$ gives
\ba
E_{0,\chi}(g,\nu) = \sum\limits_{w\in W} \sum\limits_{\substack{\gamma \in R_w\\\gamma = b_1 w t b_2}} \chi(b_2) \int_{\ol U_w(\Z) \bs \ol U_w(\R)} \int_{U_w(\R)} I_0(w t \eta \eta' g, \nu) \ol\chi(\eta \eta') d\eta d\eta'.
\ea
Now observe that
\ba
I_0(wtg, \nu) = I_0((wtw^{-1}) wg, \nu) = I_0(wtw^{-1}, \nu) I_0(wg,\nu). 
\ea
So the Fourier coefficient becomes
\ba
E_{0,\chi}(g,\nu) &= \sum\limits_{w\in W} \sum\limits_{\substack{\gamma \in R_w\\\gamma = b_1 w t b_2}} \chi(b_2) I_0(wtw^{-1}, \nu) \int_{\ol U_w(\Z) \bs \ol U_w(\R)} \int_{U_w(\R)} I_0(w \eta \eta' g, \nu) \ol\chi(\eta \eta') d\eta d\eta'\\
&= \sum\limits_{w\in W} \sum\limits_{\substack{\gamma \in R_w\\\gamma = b_1 w t b_2}} \chi(b_2) I_0(wtw^{-1}, \nu) \int_{\ol U_w(\Z) \bs \ol U_w(\R)} W_w(\eta' g, \nu, \chi) \ol\chi(\eta') d\eta'.
\ea
Recall that $W_w(g,\nu,\chi) = 0$ unless $\chi$ is trivial on $\ol U_w(\R)$. If $\chi$ is trivial on $\ol U_w(\R)$, then it follows from the definition of a Whittaker function that $W_w(\eta' g, \nu, \chi) = W_w(g,\nu,\chi)$ for $\eta' \in \ol U_w(\R)$. So the Fourier coefficient becomes
\ba
\sum\limits_{w\in W} \sum\limits_{\substack{\gamma \in R_w\\\gamma = b_1 w t b_2}} \chi(b_2) I_0(wtw^{-1}, \nu) \int_{\ol U_w(\Z) \bs \ol U_w(\R)} W_w(g, \nu, \chi).
\ea

Hence, to obtain the Fourier coefficients of $E_0(g,\nu)$, it suffices to evaluate for $w\in W$ the sum
\ba
E_{0,\chi,w}(g,\nu) := \sum\limits_{\substack{\gamma \in R_w\\ \gamma = b_1wtb_2}} \chi(b_2) I_0(wtw^{-1}, \nu) W_w(g,\nu,\chi). 
\ea

\ben
\item For $w=\id$, we have $R_{\id} = \cb{I_4}$. So we immediately obtain
\ba
E_{0,\chi,\id}(g,\nu) = W_{\id}(g,\nu,\chi).
\ea

\item For $w=s_\alpha$, we use \eqref{eq:a_R} and compute for $\gamma = b_1wtb_2 \in R_{s_\alpha}$ with Plücker coordinates $v$ that
\ba
I_0(wtw^{-1},\nu) = v_4^{2\nu_2-2\nu_1-1},
\ea
and $\chi_{m_1,m_2}(b_2) = \e(-m_1v_3/v_4)$.
Hence
\ba
E_{0,\chi,s_\alpha}(g,\nu) &= \sum\limits_{v_4\geq 1} \sum\limits_{\substack{v_3\ppmod{v_4}\\ (v_3,v_4)=1}} v_4^{2\nu_2-2\nu_1-1} \e\rb{-\frac{m_1v_3}{v_4}} W_{s_\alpha}(g,\nu,\chi)\\
&=\sum\limits_{v_4\geq 1} v_4^{2\nu_2-2\nu_1-1} c_{v_4}(m_1) W_{s_\alpha}(g,\nu,\chi),
\ea
where
\ba
c_m(n) := \sum_{\substack{t=1 \\ (t,m)=1}}^m \e\rb{\frac{nt}{m}}
\ea
is the classical Ramanujan sum. Using the well-known identity \cite[Proposition 3.1.7]{Goldfeld2006}
\begin{align}\label{eq:rsdf} 
\sum\limits_{n\geq 1} c_n(m) n^{-k-1} = \begin{cases} \displaystyle \frac{\sigma_{-k}(m)}{\zeta(k+1)} & \text{ if } m \neq 0,\\
\displaystyle \frac{\zeta(k)}{\zeta(k+1)} & \text{ if } m=0,
\end{cases}
\end{align}
we conclude that
\ba
E_{0,\chi,s_\alpha}(g,\nu) = \begin{cases} \displaystyle \frac{\sigma_{2\nu_2-2\nu_1}(m_1)}{\zeta(2\nu_1-2\nu_2+1)} W_{s_\alpha}(g,\nu,\chi) & \text{ if } m_1\neq 0,\\
\displaystyle \frac{\zeta(2\nu_1-2\nu_2)}{\zeta(2\nu_1-2\nu_2+1)} W_{s_\alpha}(g,\nu,\chi) & \text{ if } m_1=0.\end{cases}
\ea

\item For $w = s_\beta$, we use \eqref{eq:b_R} and compute for $\gamma = b_1wtb_2 \in R_{s_\beta}$ with Plücker coordinates $v$ that
\ba
I_0(wtw^{-1},\nu) = v_{23}^{\nu_1-2\nu_2-1},
\ea
and $\chi_{m_1,m_2}(b_2) = \e(-m_2v_{34}/v_{23})$. Hence
\ba
E_{0,\chi,s_\beta}(g,\nu) &=\sum\limits_{v_{23}\geq 1} \sum\limits_{\substack{v_{34}\ppmod{v_{23}}\\ (v_{23}, v_{34}) = 1}} v_{23}^{\nu_1-2\nu_2-1} \e\rb{-\frac{m_2v_{34}}{v_{23}}} W_{s_\beta}(g,\nu,\chi)\\
&= \sum\limits_{v_{23}\geq 1} v_{23}^{\nu_1-2\nu_2-1} c_{v_{23}}(m_2) W_{s_\beta}(g,\nu,\chi).
\ea
By \eqref{eq:rsdf}, we obtain
\ba
E_{0,\chi,s_\beta}(g,\nu) = \begin{cases} \displaystyle \frac{\sigma_{\nu_1-2\nu_2}(m_2)}{\zeta(2\nu_2-\nu_1+1)} W_{s_\beta}(g,\nu,\chi) & \text{ if } m_2\neq 0,\\
\displaystyle \frac{\zeta(2\nu_2-\nu_1)}{\zeta(2\nu_2-\nu_1+1)} W_{s_\beta}(g,\nu,\chi) & \text{ if } m_2 = 0.
\end{cases}
\ea

\item For $w = s_\alpha s_\beta$, we use \eqref{eq:ab_R} and compute for $\gamma = b_1wtb_2 \in R_{s_\alpha s_\beta}$ with Plücker coordinates $v$ that
\ba
I_0(wtw^{-1}, \nu) = v_2^{2\nu_2-2\nu_1-1} v_{23}^{\nu_1-2\nu_2-1} = v_2^{-\nu_1-2} d^{2\nu_2-\nu_1+1},
\ea
where $d = (v_2,v_4)$, and $\chi_{m_1,m_2}(b_2) = \e(m_2v_4/v_2)$. Hence
\ba
E_{0,\chi,s_\alpha s_\beta}(g,\nu) = \sum\limits_{v_2\geq 1} \sum\limits_{\substack{v_3,v_4\ppmod{v_2}\\ (v_2,v_3,v_4) = 1}} v_2^{-\nu_1-2} d^{2\nu_2-\nu_1+1} \e\rb{\frac{m_2v_4}{v_2}} W_{s_\alpha s_\beta}(g,\nu,\chi).
\ea
Write $v_2 = dv'_2$, $v_4 = dv'_4$. Then the sum can be rewritten as
\ba
E_{0,\chi,s_\alpha s_\beta}(g,\nu) &= \sum\limits_{d\geq 1} d^{2\nu_2-2\nu_1-1} \sum\limits_{v'_2\geq 1} {v'_2}^{-\nu_1-2} \sum\limits_{\substack{v'_4\ppmod{v'_2}\\ (v'_2, v'_4)=1}} \e\rb{\frac{m_2v'_4}{v'_2}} \sum\limits_{\substack{v_3\ppmod{dv'_2}\\ (d,v_3)=1}} W_{s_\alpha s_\beta} (g,\nu,\chi)\\
&= \sum\limits_{d\geq 1} \varphi(d) d^{2\nu_2-2\nu_1-1} \sum\limits_{v'_2\geq 1} {v'_2}^{-\nu_1-1} c_{v'_2}(m_2) W_{s_\alpha s_\beta} (g,\nu,\chi),
\ea
where $\varphi$ stands for the Euler totient function. By \eqref{eq:rsdf}, we obtain
\ba
E_{0,\chi,s_\alpha s_\beta}(g,\nu) &= \begin{cases} \displaystyle \frac{\zeta(2\nu_1-2\nu_2)}{\zeta(2\nu_1-2\nu_2+1)} \frac{\sigma_{-\nu_1}(m_2)}{\zeta(\nu_1+1)} W_{s_\alpha s_\beta}(g,\nu,\chi) & \text{ if } m_2 \neq 0,\\
\displaystyle \frac{\zeta(2\nu_1-2\nu_2)}{\zeta(2\nu_1-2\nu_2+1)}\frac{\zeta(\nu_1)}{\zeta(\nu_1+1)} W_{s_\alpha s_\beta}(g,\nu,\chi) & \text{ if } m_2=0.
\end{cases}
\ea

\item For $w = s_\beta s_\alpha$, we use \eqref{eq:ba_R} and compute for $\gamma = b_1wtb_2 \in R_{s_\beta s_\alpha}$ with Plücker coordinates $v$ that
\ba
I_0(wtw^{-1},\nu) = v_4^{2\nu_2-2\nu_1-1} v_{14}^{\nu_1-2\nu_2-1} = v_{14}^{-\nu_1-2} d^{2\nu_1-2\nu_2+1},
\ea
where $d = (v_{14},v_{24})$, and $\chi_{m_1,m_2} (b_2) = \e(m_1v_{24}/v_{14})$. Hence
\ba
E_{0,\chi,s_\beta s_\alpha}(g,\nu) = \sum\limits_{v_{14}\geq 1} \sum\limits_{\substack{v_{24}\ppmod{v_{14}}\\ v_{14} \mid d^2}} \sum\limits_{\substack{v_{34}\ppmod{v_{14}}\\ (d^2/v_{14}, v_{34}) = 1}} v_{14}^{-\nu_1-2} d^{2\nu_1-2\nu_2+1} \e\rb{\frac{m_1v_{24}}{v_{14}}} W_{s_\beta s_\alpha}(g,\nu,\chi).
\ea
Write $v_{14} = dv'_{14}$, $v_{24} = dv'_{24}$, and $d' = d^2/v_{14}$. Recall that we have $d = v'_{14} d'$. Then we have $v_{14} = d' {v'}_{14}^2$, and the sum can be rewritten as
\ba
E_{0,\chi,s_\beta s_\alpha}(g,\nu) &= \sum\limits_{d'\geq 1} {d'}^{\nu_1-2\nu_2-1} \sum\limits_{v'_{14}\geq 1} {v'}_{14}^{-2\nu_2-3} \sum\limits_{\substack{v'_{24}\ppmod{v'_{14}}\\ (v'_{14}, v'_{24}) = 1}} \e\rb{\frac{m_1v'_{24}}{v'_{14}}} \sum\limits_{\substack{v_{34}\ppmod{d'{v'}_{14}^2}\\ (d', v_{34}) = 1}} W_{s_\beta s_\alpha}(g,\nu,\chi)\\
&=\sum\limits_{d'\geq 1} \varphi(d') {d'}^{\nu_1-2\nu_2-1} \sum\limits_{v'_{14}\geq 1} {v'}_{14}^{-2\nu_2-1} c_{v'_{14}}(m_1) W_{s_\beta s_\alpha}(g,\nu,\chi).
\ea
By \eqref{eq:rsdf}, we obtain that
\ba
E_{0,\chi,s_\beta s_\alpha}(g,\nu) &= \begin{cases} \displaystyle \frac{\zeta(2\nu_2-\nu_1)}{\zeta(2\nu_2-\nu_1+1)} \frac{\sigma_{-2\nu_2}(m_1)}{\zeta(2\nu_2+1)} W_{s_\beta s_\alpha}(g,\nu,\chi) & \text{ if } m_1\neq 0,\\
\displaystyle \frac{\zeta(2\nu_2-\nu_1)}{\zeta(2\nu_2-\nu_1+1)} \frac{\zeta(2\nu_2)}{\zeta(2\nu_2+1)} W_{s_\beta s_\alpha}(g,\nu,\chi) & \text{ if } m_1= 0.
\end{cases}
\ea

\item For $w=s_\alpha s_\beta s_\alpha$, we use \eqref{eq:aba_R} and compute for $\gamma = b_1wtb_2 \in R_{s_\alpha s_\beta s_\alpha}$ with Plücker coordinates $v$ that
\ba
I_0(wtw^{-1}, \nu) = v_1^{2\nu_2-2\nu_1-1} v_{14}^{\nu_1-2\nu_2-1} = v_1^{-2\nu_2-3} \delta^{2\nu_2-\nu_1+1},
\ea
where 
\ba
d &= (v_1, v_2), \quad & \delta &= (d^2, v_1v_3+v_2v_4),
\ea
and we have $\chi_{m_1,m_2}(b_2) = \e(m_1v_2/v_1)$. Hence
\ba
E_{0,\chi,s_\alpha s_\beta s_\alpha}(g,\nu) = \sum\limits_{v_1\geq 1} \sum\limits_{\substack{v_2,v_3,v_4\ppmod{v_1}\\ (v_1,v_2,v_3,v_4)=1}} v_1^{-2\nu_2-3} \delta^{2\nu_2-\nu_1+1} \e\rb{\frac{m_1v_2}{v_1}} W_{s_\alpha s_\beta s_\alpha}(g,\nu,\chi).
\ea
Write $v_1 = dv'_1$, $v_2 = dv'_2$. Since $d \mid \delta$, we may also write $\delta = d \delta'$. Note that $\delta'  = (d, v'_1v_3+v'_2v_4)$ divides $d$. Then the sum can be rewritten as
\begin{multline*}
E_{0,\chi,s_\alpha s_\beta s_\alpha}(g,\nu) = \sum\limits_{d\geq 1} d^{-\nu_1-2} \sum\limits_{v'_1\geq 1} {v'_1}^{-2\nu_2-3} \sum\limits_{\substack{v'_2\ppmod{v'_1}\\ (v'_1,v'_2)=1}} \e\rb{\frac{m_1v'_2}{v'_1}}\\
\times\sum\limits_{\substack{v_3,v_4\ppmod{dv'_1}\\ (d,v_3,v_4)=1}} {\delta'}^{2\nu_2-\nu_1+1} W_{s_\alpha s_\beta s_\alpha}(g,\nu,\chi).
\end{multline*}
For fixed $l \mid d$, we find the number of pairs $(v_3, v_4)$ modulo $d$ satisfying 
\ba
(d,v_3,v_4)=1 \quad \text{ and } \quad (d, v'_1v_3+v'_2v_4) = l.
\ea
We first observe that for every residue class $(v_3, v_4)$ modulo $d$, we can find representatives such that $0\leq v'_1 v_3 + v'_2 v_4 < d$. As $(v'_1, v'_2) = 1$, we can find $u_3, u_4\in\Z$ such that $v'_1 u_3 + v'_2 u_4 = 1$. Then for $0\leq n < d$, the equation
\begin{align}\label{eq:aba_delta}
v'_1v_3 + v'_2 v_4 \equiv n \pmod{d}
\end{align}
has $d$ distinct solutions, given by $(v_3, v_4) = (nu_3+kv'_2, nu_4-kv'_1)$ for $0\leq k < d$. A residue class $(v_3, v_4)$ modulo $d$ satisfies $(d, v'_1v_3+v'_2v_4) = l$ if and only if $l = (n,d)$. Let $0\leq n < d$ be such that $(n,d) = l$. Then the number of solutions to \eqref{eq:aba_delta} satisfying $(d,v_3,v_4) = 1$ is given by $d\varphi(l)/l$. Meanwhile, the number of integers $0\leq n < d$ with $(n,d)=l$ is given by $\varphi(d/l)$. Hence, there are in total $d \varphi(d/l) \varphi(l)/l$ solutions for $(v_3, v_4)$ modulo $d$ such that $(d, v'_1v_3+v'_2v_4) = l$. Therefore
\ba
E_{0,\chi,s_\alpha s_\beta s_\alpha}(g,\nu) = \sum\limits_{d\geq 1} d^{-\nu_1-1} \sum\limits_{v'_1\geq 1} {v'_1}^{-2\nu_2-3} c_{v'_1}(m_1) \sum\limits_{l\mid d} \varphi\rb{\frac{d}{l}} \varphi(l) l^{2\nu_2-\nu_1}  W_{s_\alpha s_\beta s_\alpha}(g,\nu,\chi).
\ea
Writing $d = d'l$ gives
\ba
E_{0,\chi,s_\alpha s_\beta s_\alpha}(g,\nu) = \sum\limits_{d\geq 1} \varphi(d') {d'}^{-\nu_1-1} \sum\limits_{l\geq 1} \varphi(l) l^{2\nu_2-2\nu_1-1} \sum\limits_{v'_1\geq 1} {v'_1}^{-2\nu_2-3} c_{v'_1}(m_1) W_{s_\alpha s_\beta s_\alpha}(g,\nu,\chi).
\ea
By \eqref{eq:rsdf}, we obtain
\ba
E_{0,\chi,s_\alpha s_\beta s_\alpha}(g,\nu) = \begin{cases} \displaystyle \frac{\zeta(\nu_1)}{\zeta(\nu_1+1)} \frac{\zeta(2\nu_1-2\nu_2)}{\zeta(2\nu_1-2\nu_2+1)} \frac{\sigma_{-2\nu_2}(m_1)}{\zeta(2\nu_2+1)} W_{s_\alpha s_\beta s_\alpha}(g,\nu,\chi) & \text{ if } m_1\neq 0,\\
\displaystyle \frac{\zeta(\nu_1)}{\zeta(\nu_1+1)} \frac{\zeta(2\nu_1-2\nu_2)}{\zeta(2\nu_1-2\nu_2+1)} \frac{\zeta(2\nu_2)}{\zeta(2\nu_2+1)} W_{s_\alpha s_\beta s_\alpha}(g,\nu,\chi) & \text{ if } m_1 = 0.
\end{cases}
\ea

\item For $w = s_\beta s_\alpha s_\beta$, we use \eqref{eq:bab_R} and compute for $\gamma = b_1wtb_2 \in R_{s_\beta s_\alpha s_\beta}$ with Plücker coordinates $v$ that
\ba
I_0(wtw^{-1},\nu) = v_2^{2\nu_2-2\nu_1-1} v_{12}^{\nu_1-2\nu_2-1} = v_{12}^{-\nu_1-2} d_0^{2\nu_1-2\nu_2+1},
\ea
where $d_0 = (v_{12}, v_{13}, v_{14})$, and $\chi_{m_1,m_2}(b_2) = \e(m_2v_{14}/v_{12})$. Hence
\ba
E_{0,\chi,s_\beta s_\alpha s_\beta}(g,\nu) = \sum\limits_{v_{12}\geq 1} \sum\limits_{\substack{v_{13}, v_{14}, v_{23}\ppmod{v_{12}}\\ \text{conditions (see \eqref{eq:bab_R})}}} v_{12}^{-\nu_1-2} d_0^{2\nu_1-2\nu_2+1} \e\rb{\frac{m_2v_{14}}{v_{12}}} W_{s_\beta s_\alpha s_\beta}(g,\nu,\chi). 
\ea
Writing $d_1 = (v_{12}, v_{14})$, $v_{12} = d_1v'_{12}$, $v_{14} = d_1v'_{14}$, $v_{13} = d_1k$, and $d' = d_1/d_0$, $t = d_0/d'$, we expand the conditions above and rewrite the sum in terms of $d'$, $t$ and $v'_{12}$:
\ba
E_{0,\chi,s_\beta s_\alpha s_\beta}(g,\nu) &= \sum\limits_{d' \geq 1} {d'}^{-2\nu_2-3} \sum\limits_{t\geq 1} t^{\nu_1-2\nu_2-1} \sum\limits_{v'_{12}\geq 1} {v'}_{12}^{-\nu_1-2} \sum\limits_{\substack{v'_{14}\ppmod{v'_{12}}\\ (v'_{12}, v'_{14})=1}} \e\rb{\frac{m_2v'_{14}}{v'_{12}}}\\
&\hspace{5.2cm}\sum\limits_{\substack{v'_{13}\ppmod{d'v'_{12}}\\ (d', v'_{13}) = 1}} \sum\limits_{\substack{v_{23}\ppmod{{d'}^2tv'_{12}}\\ v_{23} = a+rv'_{12} \\ (r,t) = 1}} W_{s_\beta s_\alpha s_\beta}(g,\nu,\chi)\\
&= \sum\limits_{d' \geq 1} \varphi(d') {d'}^{-2\nu_2-1} \sum\limits_{t\geq 1} \varphi(t)t^{\nu_1-2\nu_2-1} \sum\limits_{v'_{12}\geq 1} {v'}_{12}^{-\nu_1-1} c_{v'_{12}}(m_2) W_{s_\beta s_\alpha s_\beta}(g,\nu,\chi).
\ea
By \eqref{eq:rsdf}, we obtain
\ba
E_{0,\chi,s_\beta s_\alpha s_\beta}(g,\nu) = \begin{cases} \displaystyle \frac{\zeta(2\nu_2)}{\zeta(2\nu_2+1)} \frac{\zeta(2\nu_2-\nu_1)}{\zeta(2\nu_2-\nu_1+1)} \frac{\sigma_{-\nu_1}(m_2)}{\zeta(\nu_1+1)} W_{s_\beta s_\alpha s_\beta} (g,\nu,\chi) & \text{ if } m_2\neq 0,\\
\displaystyle \frac{\zeta(2\nu_2)}{\zeta(2\nu_2+1)} \frac{\zeta(2\nu_2-\nu_1)}{\zeta(2\nu_2-\nu_1+1)} \frac{\zeta(\nu_1)}{\zeta(\nu_1+1)} W_{s_\beta s_\alpha s_\beta} (g,\nu,\chi) & \text{ if } m_2=0.
\end{cases}
\ea
\item For $w = w_0$, we use \eqref{eq:w0_R} and compute for $\gamma = b_1wtb_2 \in R_{s_\alpha s_\beta s_\alpha}$ with Plücker coordinates $v$ that
\ba
I_0(wtw^{-1},\nu) = v_1^{2\nu_2-2\nu_1-1} v_{12}^{\nu_1-2\nu_2-1},
\ea
and $\chi_{m_1,m_2}(b_2) = \e(m_1v_2/v_1+m_2v_{14}/v_{12})$. Hence
\ba
E_{0,\chi,w_0}(g,\nu) = \sum\limits_{\gamma \in R_{w_0}} v_1^{2\nu_2-2\nu_1-1} v_{12}^{\nu_1-2\nu_2-1} \e\rb{\frac{m_1v_2}{v_1}+\frac{m_2v_{14}}{v_{12}}} W_{w_0}(g,\nu,\chi).
\ea
Note that this is actually a Dirichlet series of $\Sp(4)$ Ramanujan sums. Indeed,
\ba
E_{0,\chi,w_0}(g,\nu) = \sum\limits_{v_1,v_{12}\geq 1} R_{v_1, v_{12}}(m_1,m_2) v_1^{2\nu_2-2\nu_1-1} v_{12}^{\nu_1-2\nu_2-1} W_{w_0}(g,\nu,\chi),
\ea
where $R_{v_1,v_{12}}(m_1,m_2)$ is the $\Sp(4)$ Ramanujan sum defined in \eqref{eq:Sp4rsd}. By \Cref{prp:Sp4rs}, we obtain
\ba
&E_{0,\chi,w_0}(g,\nu)\\
&\hspace{0.5cm}= \begin{cases} \displaystyle \frac{\sigma_{-\nu_2,\nu_2-\nu_1}(m_1,m_2)}{\zeta(2\nu_1-2\nu_2+1)\zeta(2\nu_2-\nu_1+1)\zeta(\nu_1+1)\zeta(2\nu_2+1)} W_{w_0}(g,\nu,\chi) & \text{ if } m_1,m_2\neq 0,\\
\displaystyle \frac{\sigma_{2\nu_2-2\nu_1} (m_1)}{\zeta(2\nu_1-2\nu_2+1)}\frac{\zeta(2\nu_2-\nu_1)}{\zeta(2\nu_2-\nu_1+1)}\frac{\zeta(\nu_1)}{\zeta(\nu_1+1)}\frac{\zeta(2\nu_2)}{\zeta(2\nu_2+1)} W_{w_0}(g,\nu,\chi) & \text{ if } m_1\neq 0, m_2= 0,\\
\displaystyle \frac{\sigma_{\nu_1-2\nu_2} (m_2)}{\zeta(2\nu_2-\nu_1+1)}\frac{\zeta(2\nu_1-2\nu_2)}{\zeta(2\nu_1-2\nu_2+1)}\frac{\zeta(\nu_1)}{\zeta(\nu_1+1)}\frac{\zeta(2\nu_2)}{\zeta(2\nu_2+1)} W_{w_0}(g,\nu,\chi) & \text{ if } m_1= 0, m_2\neq 0,\\
\displaystyle \frac{\zeta(2\nu_1-2\nu_2)}{\zeta(2\nu_1-2\nu_2+1)}\frac{\zeta(2\nu_2-\nu_1)}{\zeta(2\nu_2-\nu_1+1)}\frac{\zeta(\nu_1)}{\zeta(\nu_1+1)}\frac{\zeta(2\nu_2)}{\zeta(2\nu_2+1)} W_{w_0}(g,\nu,\chi) & \text{ if } m_1=m_2= 0.
\end{cases}
\ea
\ee
\begin{proof}[Proof of \Cref{Fourier:min}]
The theorem follows by combining the terms $E_{0,\chi,w}$ for $w\in W$, using the computations above.
\end{proof}

\section{Residual eisenstein series} \label{section:degenerate}

In this section we consider the residual Eisenstein series $E_\alpha(g,\nu,1)$ and $E_\beta(g,\nu,1)$. We start with the following proposition, which is easy to verify.

\begin{prp}\label{Eisenstein3}
We have
\ba
E_\alpha(g,\nu, E(*,s)) &= E_0(g, (\nu+s,\nu)),\\
E_\beta(g,\nu, E(*,s)) &= E_0\rb{g, \rb{\nu, \frac{\nu}{2}+s}}. 
\ea
\end{prp}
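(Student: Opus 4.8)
The plan is to prove both identities by \emph{induction in stages} (the tower property of Eisenstein series): I will expand the inner $\GL(2)$ Eisenstein series $E(*,s)$ as a sum over $\Gamma_{\infty,2}\bs\Gamma_2$ and then absorb these $\SL_2(\Z)$-cosets into the outer sum over $P_\alpha\cap\Gamma\bs\Gamma$, thereby reconstructing the full sum over $P_0\cap\Gamma\bs\Gamma$. Writing $z_\gamma\in\H$ for the upper-half-plane point attached to $m_\alpha(\gamma g)$ via $M_\alpha\cong\GL_2(\R)$, the definition of $E_\alpha$ together with the defining expansion $E(z,s)=\tfrac12\sum_\delta I(\delta z,s)$ gives
\[
E_\alpha(g,\nu,E(*,s)) = \frac12 \sum_{\gamma\in P_\alpha\cap\Gamma\bs\Gamma}\ \sum_{\delta\in\Gamma_{\infty,2}\bs\Gamma_2} I(\delta z_\gamma,s)\, I_\alpha(\gamma g,\nu).
\]

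First I would lift each $\delta\in\SL_2(\Z)$ to $\tilde\delta\in M_\alpha\cap\Gamma$; since $M_\alpha$ normalises $N_\alpha$, the projection $m_\alpha$ is $M_\alpha$-equivariant, so the $\H$-point of $\tilde\delta\gamma g$ is $\delta z_\gamma$ and $I(\delta z_\gamma,s)$ is the $\SL_2$-height of $\tilde\delta\gamma g$. The first key observation is that $I_\alpha(g,\nu)=(y_1y_2)^{\nu+3/2}$ is a power of the determinant-height and is therefore invariant under the left action of $[M_\alpha,M_\alpha]\cong\SL_2$, so $I_\alpha(\gamma g,\nu)=I_\alpha(\tilde\delta\gamma g,\nu)$ and the summand becomes $I(\delta z_\gamma,s)\,I_\alpha(\tilde\delta\gamma g,\nu)$.

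The second step is the exponent bookkeeping. In the Iwasawa coordinates \eqref{g_canonical} the Siegel Levi yields $\im(z)=y_1/y_2$, hence $I(z,s)=(y_1/y_2)^{s+1/2}$ and
\[
I(z,s)\,I_\alpha(g,\nu) = (y_1/y_2)^{s+1/2}\,y_1^{\nu+3/2}y_2^{\nu+3/2} = y_1^{\nu+s+2}\,y_2^{\nu-s+1} = I_0\big(g,(\nu+s,\nu)\big).
\]
Because both sides transform identically under left multiplication by $\tilde\delta$, this persists with $g$ replaced by $\tilde\delta\gamma g$, giving $I(\delta z_\gamma,s)\,I_\alpha(\gamma g,\nu)=I_0(\tilde\delta\gamma g,(\nu+s,\nu))$. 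For $P_\beta$ the middle $\SL_2$-block leaves $y_1$ untouched and gives $\im(z)=y_2^2$, so $I(z,s)=y_2^{2s+1}$ and $I(z,s)\,I_\beta(g,\nu)=y_1^{\nu+2}y_2^{2s+1}=I_0(g,(\nu,\tfrac{\nu}{2}+s))$, matching the second claim.

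It then remains to show that $(\delta,\gamma)\mapsto\tilde\delta\gamma$ identifies the index set $(\Gamma_{\infty,2}\bs\Gamma_2)\times(P_\alpha\cap\Gamma\bs\Gamma)$, after accounting for $\pm1$ via the factor $\tfrac12$, with $P_0\cap\Gamma\bs\Gamma$; this is the decomposition $P_0=B_{M_\alpha}N_\alpha$ fibering $P_0\cap\Gamma\bs\Gamma$ over $P_\alpha\cap\Gamma\bs\Gamma$ with fibre $(B_{M_\alpha}\cap\Gamma)\bs(M_\alpha\cap\Gamma)$. I expect this coset bookkeeping — reconciling $\GL_2(\Z)$ in the Levi with the $\SL_2(\Z)$ of $E(z,s)$ and tracking the $\tfrac12$ — to be the main obstacle. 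The cleanest way to make it rigorous is via the Pl\"ucker parametrisations of \Cref{Pluecker0b,Pluecker1b}: a coset of $P_\alpha\cap\Gamma\bs\Gamma$ is recorded by the primitive secondary coordinates $(v_{12},\dots,v_{34})$, whereas a coset of $P_0\cap\Gamma\bs\Gamma$ additionally records the primitive vector $(v_1,\dots,v_4)$, and the $\SL_2$-fibre is exactly this extra freedom, in bijection with $\Gamma_{\infty,2}\bs\Gamma_2$. The argument for $E_\beta$ is identical, using \Cref{Pluecker2b} and the non-Siegel Levi $M_\beta$.
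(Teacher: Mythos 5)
Your proposal is correct and follows essentially the same route as the paper: expand the inner $\GL(2)$ Eisenstein series, identify $I(\delta z_\gamma,s)\,I_\alpha(\gamma g,\nu)$ with $I_0(\tilde\delta\gamma g,(\nu+s,\nu))$ (the paper does this by explicitly computing the Iwasawa coordinates of $\tilde\delta g$ via $\im(\delta z)=\im(z)/|cz+d|^2$, which is the same exponent bookkeeping you describe), and then collapse the double coset sum using $\Gamma_{\infty,2}\bs\Gamma_2 \to (P_0\cap\Gamma)\bs(P_\alpha\cap\Gamma)$ with the factor $\tfrac12$ absorbing $\pm I$. The only cosmetic difference is that you propose to justify the final coset identification via the Pl\"ucker parametrisations, whereas the paper simply exhibits the bijection $\delta\mapsto\diag(\delta,(\delta^{-1})^T)$ directly; both are fine.
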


By taking the residues, we obtain the residual Eisenstein series $E_\alpha(g, \nu, 1)$, $E_\beta(g, \nu, 1)$. Precisely, we have the following.

\begin{prp}\label{EisensteinResidue}
We have
\ba
\Res_{s=1/2} E_0(g, (\nu+s, \nu)) &= \frac{3}{\pi} E_\alpha(g, \nu, 1), &
\Res_{s=1/2} E_0\rb{g, \rb{\nu, \frac{\nu}{2}+s}} &= \frac{3}{\pi} E_\beta(g,\nu,1).
\ea
\end{prp} 
\begin{proof}
It is well-known (see \cite[Theorem 3.1.10]{Goldfeld2006}) that $E(z,s)$ has a pole at $s = \frac{1}{2}$ with residue $\frac{3}{\pi}$. Putting this back into \Cref{Eisenstein3} yields the result.
\end{proof}

\subsection{Constant terms}
By taking residues of the constant terms for $E_0(g,\nu)$, we get the constant terms for $E_\alpha(g,\nu,1)$ and $E_\beta(g,\nu,1)$. 

\begin{cor}\label{constterm:Siegelmin}
The constant term for $E_\alpha(g,\nu,1)$ along the minimal parabolic is given by
\ba
C_\alpha^0 (g,\nu,1) = C_{\alpha,\id}^0 (g,\nu,1) + C_{\alpha, s_\beta}^0 (g,\nu,1) + C_{\alpha, s_\beta s_\alpha}^0(g,\nu,1) + C_{\alpha, s_\beta s_\alpha s_\beta}^0 (g,\nu,1),
\ea
where
\ba
C_{\alpha, \id}^0 (g,\nu,1) &=  y_1^{\nu+3/2} y_2^{\nu+3/2},\\
C_{\alpha, s_\beta}^0  (g,\nu,1) &= \frac{\Lambda(\nu+\frac{1}{2})}{\Lambda(\nu+\frac{3}{2})} y_1^{\nu+3/2} y_2^{-\nu+1/2},\\
C_{\alpha, s_\beta s_\alpha}^0  (g,\nu,1) &= \frac{\Lambda(2\nu)}{\Lambda(2\nu+1)}\frac{\Lambda(\nu+\frac{1}{2})}{\Lambda(\nu+\frac{3}{2})} y_1^{-\nu+3/2} y_2^{\nu+1/2},\\
C_{\alpha, s_\beta s_\alpha s_\beta}^0 (g,\nu,1) &= \frac{\Lambda(\nu-\frac{1}{2})}{\Lambda(\nu+\frac{1}{2})}\frac{\Lambda(2\nu)}{\Lambda(2\nu+1)}\frac{\Lambda(\nu+\frac{1}{2})}{\Lambda(\nu+\frac{3}{2})} y_1^{-\nu+3/2} y_2^{-\nu+3/2}.
\ea
\end{cor}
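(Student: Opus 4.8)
The plan is to obtain the constant term of $E_\alpha(g,\nu,1)$ along $P_0$ as a residue of the already-computed constant term of the minimal Eisenstein series, so that \Cref{constterm:minmin} does all the real work. By \Cref{EisensteinResidue} we have $E_\alpha(g,\nu,1) = \tfrac{\pi}{3}\Res_{s=1/2} E_0(g,(\nu+s,\nu))$. Applying the constant-term functional $\int_{N_0(\Z)\bs N_0(\R)}(\,\cdot\,)(\eta g)\,d\eta$ to both sides and interchanging this integral with the residue, I would reduce the claim to
\[
C_\alpha^0(g,\nu,1) = \frac{\pi}{3}\Res_{s=1/2} C_0\rb{g,(\nu+s,\nu)}.
\]
The interchange is legitimate because $s\mapsto E_0(\eta g,(\nu+s,\nu))$ is a meromorphic family whose only singularity near $s=1/2$ is a simple pole, so the residue is $\tfrac{1}{2\pi i}$ times a contour integral on a small circle over which the integrand is uniformly controlled on the compact domain $N_0(\Z)\bs N_0(\R)$, and Fubini applies.

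Next I would substitute $\nu_1=\nu+s$, $\nu_2=\nu$ into the eight summands of \Cref{constterm:minmin} and locate the poles at $s=1/2$. The key observation is that the only $\Lambda$-quotient that becomes singular there is
\[
\frac{\Lambda(2\nu_1-2\nu_2)}{\Lambda(2\nu_1-2\nu_2+1)} = \frac{\Lambda(2s)}{\Lambda(2s+1)},
\]
since $\Lambda$ inherits a simple pole at $1$ from $\zeta$. Hence only the four Weyl elements $w\in\{s_\alpha,\,s_\alpha s_\beta,\,s_\alpha s_\beta s_\alpha,\,s_\alpha s_\beta s_\alpha s_\beta\}$, that is, exactly those of the form $s_\alpha w'$ with $w'\in W(M_0,M_\alpha)=\{\id,\,s_\beta,\,s_\beta s_\alpha,\,s_\beta s_\alpha s_\beta\}$, carry this factor and contribute; the remaining four summands are holomorphic at $s=1/2$ for generic $\nu$ and drop out.

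Then I would compute the universal residue $\Res_{s=1/2}\tfrac{\Lambda(2s)}{\Lambda(2s+1)}$. Using $\Res_{s=1}\Lambda(s)=1$ (equivalently $\Res_{s=1}\zeta(s)=1$ together with $\pi^{-1/2}\Gamma(\tfrac12)=1$) and $\Lambda(2)=\pi^{-1}\Gamma(1)\zeta(2)=\pi/6$, the substitution $u=2s$ gives $\Res_{s=1/2}\tfrac{\Lambda(2s)}{\Lambda(2s+1)}=\tfrac12\cdot\tfrac{1}{\Lambda(2)}=\tfrac{3}{\pi}$. The prefactor $\tfrac{\pi}{3}$ cancels this residue exactly, so for each contributing $w=s_\alpha w'$ the surviving contribution is the product of the remaining (holomorphic) $\Lambda$-quotients and the monomial in $y_1,y_2$, evaluated at $s=1/2$; reading these off term by term, the summand indexed by $s_\alpha w'$ produces precisely $C_{\alpha,w'}^0(g,\nu,1)$, yielding the four stated formulae.

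The main obstacle is the justification of the interchange of the constant-term integral with the residue, together with the verification that near $s=1/2$ no extraneous singularity hides in the other summands. This rests on the meromorphic continuation of the minimal Eisenstein series (from the cited work of Langlands and M\oe glin--Waldspurger) and is made transparent here by the closed forms of \Cref{constterm:minmin}, which one inspects directly to confirm that the other seven $\Lambda$-quotients are regular at $s=1/2$ for generic $\nu$; the equality of meromorphic functions in $\nu$ then extends to all $\nu$ by analytic continuation. The remaining work is routine bookkeeping of exponents and $\Lambda$-arguments.
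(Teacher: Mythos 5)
Your proposal is correct and follows exactly the paper's route: the paper obtains \Cref{constterm:Siegelmin} by taking the residue at $s=1/2$ of the constant term formulae in \Cref{constterm:minmin} under the substitution $(\nu_1,\nu_2)=(\nu+s,\nu)$, via \Cref{EisensteinResidue}. Your identification of the four contributing Weyl elements, the residue computation $\Res_{s=1/2}\Lambda(2s)/\Lambda(2s+1)=3/\pi$, and the resulting cancellation with the prefactor $\pi/3$ all check out against the stated formulae.
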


\begin{cor}
The constant term for $E_\alpha(g,\nu,1)$ along the Siegel parabolic is given by
\ba
C_\alpha (g,\nu,1) = C_{\alpha,\id} (g,\nu,1) + C_{\alpha, s_\beta s_\alpha} (g,\nu,1) + C_{\alpha, s_\beta s_\alpha s_\beta} (g,\nu,1),
\ea
where
\ba
C_{\alpha,\id} (g,\nu,1) &= y_1^{\nu+3/2} y_2^{\nu+3/2},\\
C_{\alpha, s_\beta s_\alpha}  (g,\nu,1) &= \frac{\Lambda(\nu+\frac{1}{2})}{\Lambda(\nu+\frac{3}{2})} E\rb{-n_1+\frac{y_1}{y_2}i, \nu} y_1y_2,\\
C_{\alpha, s_\beta s_\alpha s_\beta} (g,\nu,1) &= \frac{\Lambda(\nu-\frac{1}{2})}{\Lambda(\nu+\frac{1}{2})}\frac{\Lambda(2\nu)}{\Lambda(2\nu+1)}\frac{\Lambda(\nu+\frac{1}{2})}{\Lambda(\nu+\frac{3}{2})} y_1^{-\nu+3/2} y_2^{-\nu+3/2}.
\ea
\end{cor}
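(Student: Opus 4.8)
The plan is to read this off as a residue of the minimal-parabolic computation already in hand. By \Cref{EisensteinResidue} we have $\Res_{s=1/2} E_0(g,(\nu+s,\nu)) = \tfrac{3}{\pi} E_\alpha(g,\nu,1)$, equivalently $E_\alpha(g,\nu,1) = \tfrac{\pi}{3}\Res_{s=1/2} E_0(g,(\nu+s,\nu))$. The constant term along $P_\alpha$ is integration of the Eisenstein series over the compact quotient $N_\alpha(\Z)\bs N_\alpha(\R)$, and the meromorphic continuation in $s$ is uniform on this compact domain, so the constant-term functional commutes with $\Res_{s=1/2}$. Therefore
\[
C_\alpha(g,\nu,1) = \frac{\pi}{3}\Res_{s=1/2} C_0^\alpha\bigl(g,(\nu+s,\nu)\bigr),
\]
and it remains to evaluate the residue of the four-term expression of \Cref{constterm:minSiegel} after the substitution $\nu_1=\nu+s$, $\nu_2=\nu$.

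Under this substitution the relevant combinations become $\nu_1-\nu_2=s$, $2\nu_2-\nu_1=\nu-s$, $2\nu_1-2\nu_2=2s$, $\nu_1=\nu+s$ and $2\nu_2=2\nu$, so the four summands carry the $\GL(2)$ Eisenstein series $E(z_0,s)$ for $w=\id$ and $w=s_\beta s_\alpha s_\beta$, and $E(z_0,\nu)$ for $w=s_\beta$ and $w=s_\beta s_\alpha$, where $z_0=-n_1+\tfrac{y_1}{y_2}i$. I would then locate the pole at $s=1/2$ in each term. The $\id$ and $s_\beta s_\alpha s_\beta$ terms inherit the pole of $E(z_0,s)$, whose residue at $s=1/2$ equals $\tfrac{3}{\pi}$; the $s_\beta s_\alpha$ term instead inherits a pole from the factor $\Lambda(2s)$, with $\Res_{s=1/2}\Lambda(2s)=\tfrac{1}{2}\Res_{u=1}\Lambda(u)=\tfrac{1}{2}$; and the $s_\beta$ term, whose Eisenstein parameter $E(z_0,\nu)$ and all $\Lambda$-factors are holomorphic at $s=1/2$ for generic $\nu$, contributes nothing. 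This accounts for the absence of an $s_\beta$ term in the statement.

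Assembling the residues and multiplying by $\tfrac{\pi}{3}$ gives the claimed formulas. The $\id$ term yields $y_1^{\nu+3/2}y_2^{\nu+3/2}$ at once; the $s_\beta s_\alpha s_\beta$ term yields $\tfrac{\Lambda(\nu-1/2)}{\Lambda(\nu+1/2)}\tfrac{\Lambda(2\nu)}{\Lambda(2\nu+1)}\tfrac{\Lambda(\nu+1/2)}{\Lambda(\nu+3/2)}\,y_1^{-\nu+3/2}y_2^{-\nu+3/2}$ after setting $s=1/2$ in its holomorphic factors. For the $s_\beta s_\alpha$ term the only bookkeeping is the normalisation: collecting the residue of $\Lambda(2s)$ against the surviving factor $\tfrac{\Lambda(\nu+1/2)}{\Lambda(\nu+3/2)\Lambda(2)}$ leaves a constant $\tfrac{\pi}{3}\cdot\tfrac{1}{2\Lambda(2)}$, and since $\Lambda(2)=\pi^{-1}\Gamma(1)\zeta(2)=\tfrac{\pi}{6}$ one has $2\Lambda(2)=\tfrac{\pi}{3}$, so the constant is exactly $1$ and the term collapses to $\tfrac{\Lambda(\nu+1/2)}{\Lambda(\nu+3/2)}E(z_0,\nu)\,y_1y_2$.

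The only genuinely non-routine points are the justification that the residue may be pulled inside the constant-term integral, and the correct identification of the pole source in each Weyl term—in particular recognising that it is the pole of $\Lambda(2s)$, rather than any pole of an Eisenstein series, that produces the $s_\beta s_\alpha$ contribution, and verifying the clean cancellation of the normalising constants $\tfrac{\pi}{3}$ and $2\Lambda(2)=\tfrac{\pi}{3}$ so that no spurious factor survives.
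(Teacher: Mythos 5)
Your proposal is correct and is exactly the paper's route: Section 7 obtains all constant terms of $E_\alpha(g,\nu,1)$ by taking $\frac{\pi}{3}\Res_{s=1/2}$ of the corresponding minimal-parabolic constant terms under the substitution $\nu_1=\nu+s$, $\nu_2=\nu$, using \Cref{EisensteinResidue}. Your identification of the pole sources (the pole of $E(z_0,s)$ for the $\id$ and $s_\beta s_\alpha s_\beta$ terms, the pole of $\Lambda(2s)$ for the $s_\beta s_\alpha$ term, and no pole for the $s_\beta$ term) and the normalisation check $2\Lambda(2)=\pi/3$ are all correct.
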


\begin{cor}
The constant term for $E_\alpha(g,\nu,1)$ along the Jacobi parabolic is given by
\ba
C_\alpha^\beta(g,\nu,1) = C_{\alpha, s_\beta}^\beta  (g,\nu,1) + C_{\alpha, s_\beta s_\alpha s_\beta}^\beta (g,\nu,1),
\ea
where
\ba
C_{\alpha, s_\beta}^\beta  (g,\nu,1) &= E\rb{-n_5+y_2^2i, \frac{\nu}{2}+\frac{1}{4}} y_1^{\nu+3/2},\\
C_{\alpha, s_\beta s_\alpha s_\beta}^\beta (g,\nu,1) &= \frac{\Lambda(2\nu)}{\Lambda(2\nu+1)} \frac{\Lambda(\nu+\frac{1}{2})}{\Lambda(\nu+\frac{3}{2})} E\rb{-n_5+y_2^2i, \frac{\nu}{2}-\frac{1}{4}} y_1^{-\nu+3/2}.
\ea
\end{cor}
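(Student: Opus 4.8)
The plan is to obtain this constant term as a residue of the corresponding constant term for the minimal Eisenstein series, via \Cref{EisensteinResidue}. That proposition gives $E_\alpha\rb{g,\nu,1} = \frac{\pi}{3}\Res_{s=1/2} E_0\rb{g,\rb{\nu+s,\nu}}$. Since the constant term along $P_\beta$ is the integral over the \emph{compact} nilmanifold $N_\beta(\Z)\bs N_\beta(\R)$, and $s\mapsto E_0\rb{\eta g, \rb{\nu+s,\nu}}$ is meromorphic with only a simple pole at $s=\frac{1}{2}$, I would first justify interchanging the residue with this integral, writing the residue as a small contour integral and applying Fubini on the product of the contour with the compact nilmanifold. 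This yields $C_\alpha^\beta\rb{g,\nu,1} = \frac{\pi}{3}\Res_{s=1/2} C_0^\beta\rb{g, \rb{\nu+s,\nu}}$, reducing the problem to the explicit formula of \Cref{constterm:minNS}.

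Next I would substitute $\nu_1 = \nu+s$, $\nu_2=\nu$ into the four summands of \Cref{constterm:minNS}, tracking $2\nu_1-2\nu_2 = 2s$, $2\nu_2-\nu_1 = \nu-s$, $\nu_1 = \nu+s$, $2\nu_2 = 2\nu$; the arguments of the $\GL(2)$ Eisenstein factors become $\nu_2-\frac{\nu_1}{2} = \frac{\nu}{2}-\frac{s}{2}$ and $\frac{\nu_1}{2} = \frac{\nu+s}{2}$. The key observation is that the only factor with a pole at $s=\frac{1}{2}$ is $\Lambda\rb{2\nu_1-2\nu_2} = \Lambda(2s)$, which has a simple pole of residue $\frac{1}{2}$ in $s$ (from the pole of $\zeta$ at $1$), while $\Lambda(2s+1)\to\Lambda(2) = \frac{\pi}{6}$; hence $\Res_{s=1/2}\frac{\Lambda(2s)}{\Lambda(2s+1)} = \frac{3}{\pi}$. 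This factor is present precisely in the $s_\alpha$ and $s_\alpha s_\beta s_\alpha$ summands. The remaining two summands $C_{0,\id}^\beta$ and $C_{0,s_\alpha s_\beta}^\beta$ are holomorphic at $s=\frac{1}{2}$: their $\GL(2)$ Eisenstein factors $E\rb{\cdot, \frac{\nu+s}{2}}$ and $E\rb{\cdot, \frac{\nu}{2}-\frac{s}{2}}$ have poles only at $s=1-\nu$ and $s=\nu-1$, while the accompanying $\Lambda$-quotients are regular there for generic $\nu$, so they contribute nothing.

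It then remains to evaluate the two surviving residues and multiply by $\frac{\pi}{3}$. For the $s_\alpha$ summand the residue equals $\frac{3}{\pi} E\rb{-n_5+y_2^2 i, \frac{\nu}{2}+\frac{1}{4}} y_1^{\nu+3/2}$, and for the $s_\alpha s_\beta s_\alpha$ summand it equals $\frac{\Lambda(2\nu)}{\Lambda(2\nu+1)}\frac{\Lambda(\nu+\frac{1}{2})}{\Lambda(\nu+\frac{3}{2})}\frac{3}{\pi} E\rb{-n_5+y_2^2 i, \frac{\nu}{2}-\frac{1}{4}} y_1^{-\nu+3/2}$, the exponents arising from evaluating $y_1^{2\nu_2-\nu_1+2}$ and $y_1^{-\nu_1+2}$ at $s=\frac{1}{2}$. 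Multiplying by $\frac{\pi}{3}$ recovers exactly $C_{\alpha,s_\beta}^\beta$ and $C_{\alpha,s_\beta s_\alpha s_\beta}^\beta$. I expect the main obstacle to be the clean justification of interchanging residue and integral, together with the cosmetic but careful relabeling of Weyl elements: the surviving $s_\alpha$- and $s_\alpha s_\beta s_\alpha$-terms of the minimal constant term are reindexed as the $s_\beta$- and $s_\beta s_\alpha s_\beta$-terms to match $W(M_\alpha,M_\beta)$, the correct indexing set for the constant term of the Siegel Eisenstein series along $P_\beta$.
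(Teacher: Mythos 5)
Your proposal is correct and follows exactly the paper's route: the paper obtains this corollary by taking the residue at $s=\tfrac{1}{2}$ of $C_0^\beta\rb{g,(\nu+s,\nu)}$ from \Cref{constterm:minNS} via \Cref{EisensteinResidue}, with only the factor $\Lambda(2s)/\Lambda(2s+1)$ contributing a pole, precisely as you compute. Your extra care about interchanging the residue with the integral over the compact nilmanifold, and your remark on the reindexing of Weyl elements, go slightly beyond what the paper records but do not constitute a different method.
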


\begin{cor}
The constant term for $E_\beta(g,\nu,1)$ along the minimal parabolic is given by
\ba
C_\beta^0 (g,\nu,1) = C_{\beta,\id}^0(g,\nu,1) + C_{\beta, s_\alpha}^0(g,\nu,1) + C_{\beta, s_\alpha s_\beta}^0(g,\nu,1) + C_{\beta,s_\alpha s_\beta s_\alpha}^0(g,\nu,1),
\ea
where
\ba
C_{\beta, \id}^0 (g,\nu,1) &=  y_1^{\nu+2},\\
C_{\beta, s_\alpha}^0 (g,\nu,1)  &= \frac{\Lambda(\nu+1)}{\Lambda(\nu+2)} y_1 y_2^{\nu+1},\\
C_{\beta, s_\alpha s_\beta}^0 (g,\nu,1) &= \frac{\Lambda(\nu)}{\Lambda(\nu+1)} \frac{\Lambda(\nu+1)}{\Lambda(\nu+2)} y_1 y_2^{-\nu+1},\\
C_{\beta, s_\alpha s_\beta s_\alpha}^0 (g,\nu,1) &= \frac{\Lambda(\nu-1)}{\Lambda(\nu)}\frac{\Lambda(\nu)}{\Lambda(\nu+1)} \frac{\Lambda(\nu+1)}{\Lambda(\nu+2)} y_1^{-\nu+2}.
\ea
\end{cor}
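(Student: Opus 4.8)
The plan is to avoid a direct integral computation and instead obtain this constant term as a residue of the constant term of the minimal Eisenstein series already recorded in \Cref{constterm:minmin}. By \Cref{EisensteinResidue} we have $E_\beta\rb{g,\nu,1} = \frac{\pi}{3}\Res_{s=1/2} E_0\rb{g,\rb{\nu,\frac{\nu}{2}+s}}$. Since the constant term along $P_0$ is an integral over the \emph{compact} quotient $N_0(\Z)\bs N_0(\R)$, forming the constant term commutes with taking the residue in $s$, so that $C_\beta^0\rb{g,\nu,1} = \frac{\pi}{3}\Res_{s=1/2} C_0\rb{g,\rb{\nu,\frac{\nu}{2}+s}}$. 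It then remains to specialise the eight-term formula of \Cref{constterm:minmin} at $(\nu_1,\nu_2) = \rb{\nu,\frac{\nu}{2}+s}$ and read off the residue at $s=\frac12$.

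First I would record the effect of the substitution on the arguments of the completed zeta functions, namely $2\nu_2-\nu_1 = 2s$, $2\nu_1-2\nu_2 = \nu-2s$, $\nu_1 = \nu$ and $2\nu_2 = \nu+2s$. For generic $\nu$ the only factor with a pole at $s=\frac12$ is $\Lambda(2\nu_2-\nu_1) = \Lambda(2s)$, which inherits the simple pole of $\zeta(2s)$; the factors $\Lambda(\nu-2s)$ and $\Lambda(\nu+2s)$ have their poles away from $s=\frac12$, and all powers of $y_1,y_2$ are entire in $s$. Inspecting the eight summands, exactly those indexed by $w\in\cb{s_\beta,\, s_\beta s_\alpha,\, s_\beta s_\alpha s_\beta,\, s_\alpha s_\beta s_\alpha s_\beta}$ carry the factor $\frac{\Lambda(2s)}{\Lambda(2s+1)}$, while the remaining four are holomorphic at $s=\frac12$ and do not contribute. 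These four surviving elements are precisely $s_\beta W(M_0,M_\beta)$, which accounts for the answer being indexed by $\cb{\id, s_\alpha, s_\alpha s_\beta, s_\alpha s_\beta s_\alpha}$.

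Next I would compute the single universal scalar. Using $\Res_{s=1/2}\Lambda(2s) = \frac12$ (from $\Res_{s=1}\zeta(s)=1$ and the chain rule) together with $\Lambda(2) = \frac{\pi}{6}$, one finds $\frac{\pi}{3}\Res_{s=1/2}\frac{\Lambda(2s)}{\Lambda(2s+1)} = \frac{\pi}{3}\cdot\frac{1/2}{\pi/6} = 1$; this is exactly the reciprocal of the residue $\frac{3}{\pi}$ of $E(z,s)$ at $s=\frac12$, a reassuring consistency check. Consequently, for each of the four contributing $w$ the residue is obtained by deleting the factor $\frac{\Lambda(2s)}{\Lambda(2s+1)}$ and evaluating the remaining holomorphic factors at $s=\frac12$. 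Performing this term by term reproduces the four claimed expressions: $w=s_\beta$ gives $y_1^{\nu+2}$, $w=s_\beta s_\alpha$ gives $\frac{\Lambda(\nu+1)}{\Lambda(\nu+2)}y_1 y_2^{\nu+1}$, $w=s_\beta s_\alpha s_\beta$ gives $\frac{\Lambda(\nu)}{\Lambda(\nu+1)}\frac{\Lambda(\nu+1)}{\Lambda(\nu+2)}y_1 y_2^{-\nu+1}$, and $w=s_\alpha s_\beta s_\alpha s_\beta$ gives $\frac{\Lambda(\nu-1)}{\Lambda(\nu)}\frac{\Lambda(\nu)}{\Lambda(\nu+1)}\frac{\Lambda(\nu+1)}{\Lambda(\nu+2)}y_1^{-\nu+2}$.

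The only genuinely analytic point, and hence the main obstacle to make fully rigorous, is the interchange $\Res_{s=1/2}\int_{N_0(\Z)\bs N_0(\R)} = \int_{N_0(\Z)\bs N_0(\R)}\Res_{s=1/2}$. This is legitimate because the meromorphic continuation of $E_0\rb{g,\rb{\nu,\frac{\nu}{2}+s}}$ is locally uniform in $g$ with a \emph{simple} pole at $s=\frac12$, so writing the residue as a small contour integral in $s$ and applying Fubini over the compact domain (together with Morera's theorem) justifies the exchange. Everything after this reduction is pure bookkeeping: specialising \Cref{constterm:minmin} and simplifying ratios of $\Lambda$-values, with no further input required.
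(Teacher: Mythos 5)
Your proposal is correct and is exactly the paper's route: the paper obtains this corollary by taking the residue at $s=\tfrac12$ of the constant term formula of \Cref{constterm:minmin} specialised at $(\nu_1,\nu_2)=\rb{\nu,\frac{\nu}{2}+s}$, via \Cref{EisensteinResidue}. Your identification of the four pole-carrying Weyl terms, the normalising constant $\frac{\pi}{3}\Res_{s=1/2}\frac{\Lambda(2s)}{\Lambda(2s+1)}=1$, and the resulting four expressions all check out, and you supply the residue--integral interchange justification that the paper leaves implicit.
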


\begin{cor}
The constant term for $E_\beta(g,\nu,1)$ along the Siegel parabolic is given by
\ba
C_\beta^\alpha(g,\nu,1) = C_{\beta, s_\beta}^\alpha (g,\nu,1) + C_{\beta, s_\beta s_\alpha s_\beta}^\alpha (g,\nu,1),
\ea
where
\ba
C_{\beta, s_\beta}^\alpha (g,\nu,1) &= E\rb{-n_1+\frac{y_1}{y_2}i, \frac{\nu+1}{2}} y_1^{\nu/2+1} y_2^{\nu/2+1},\\
C_{\beta, s_\beta s_\alpha s_\beta}^\alpha (g,\nu,1) &= \frac{\Lambda(\nu)}{\Lambda(\nu+1)} \frac{\Lambda(\nu+1)}{\Lambda(\nu+2)} E\rb{-n_1+\frac{y_1}{y_2}i, \frac{\nu-1}{2}} y_1^{-\nu/2+1} y_2^{-\nu/2+1}.
\ea
\end{cor}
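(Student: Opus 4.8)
The plan is to obtain this constant term as a residue of a constant term of the minimal Eisenstein series, exactly in the spirit of the preceding corollaries in this section. By \Cref{Eisenstein3} we have $E_\beta\rb{g,\nu,E(*,s)} = E_0\rb{g,\rb{\nu,\tfrac{\nu}{2}+s}}$, and since $E(z,s)$ has a simple pole at $s=\tfrac12$ with residue $\tfrac{3}{\pi}$ (as used in the proof of \Cref{EisensteinResidue}), taking residues gives
\ba
E_\beta\rb{g,\nu,1} = \frac{\pi}{3}\Res_{s=1/2} E_0\rb{g,\rb{\nu,\tfrac{\nu}{2}+s}}.
\ea
The constant term along $P_\alpha$ is an integral over the compact quotient $N_\alpha(\Z)\bs N_\alpha(\R)$, and $s\mapsto E_0\rb{g,\rb{\nu,\tfrac{\nu}{2}+s}}$ is a meromorphic family with locally uniform control in $g$; hence $\Res_{s=1/2}$ commutes with this integral, and I would conclude $C_\beta^\alpha\rb{g,\nu,1} = \tfrac{\pi}{3}\Res_{s=1/2} C_0^\alpha\rb{g,\rb{\nu,\tfrac{\nu}{2}+s}}$.

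Next I would substitute $\nu_1=\nu$, $\nu_2=\tfrac{\nu}{2}+s$ into the four terms of \Cref{constterm:minSiegel}, indexed by $W(M_0,M_\alpha)=\{\id,s_\beta,s_\beta s_\alpha,s_\beta s_\alpha s_\beta\}$. Under this substitution the combination $2\nu_2-\nu_1$ collapses to $2s$, so the factor $\Lambda(2\nu_2-\nu_1)=\Lambda(2s)$ appears in \emph{precisely} the $s_\beta$ and $s_\beta s_\alpha s_\beta$ terms. Since $\Lambda(u)=\pi^{-u/2}\Gamma(u/2)\zeta(u)$ has a simple pole at $u=1$ with residue $1$, the factor $\Lambda(2s)$ has a simple pole at $s=\tfrac12$ with $\Res_{s=1/2}\Lambda(2s)=\tfrac12$. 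The data entering the other two terms, namely $\nu_1-\nu_2=\tfrac{\nu}{2}-s$, the factor $\Lambda(2\nu_1-2\nu_2)=\Lambda(\nu-2s)$, and the $\GL(2)$ Eisenstein argument $E\rb{\cdot,\tfrac{\nu}{2}+s}$, are holomorphic at $s=\tfrac12$ for generic $\nu$; hence the $\id$ and $s_\beta s_\alpha$ terms contribute nothing and drop out, leaving exactly two surviving terms.

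It then remains to evaluate the two residues. In each case the pole comes solely from $\Lambda(2s)$, so the residue is $\tfrac12$ times the value of all remaining factors at $s=\tfrac12$. Using $\Lambda(2)=\pi^{-1}\Gamma(1)\zeta(2)=\tfrac{\pi}{6}$, the overall constant collapses neatly, $\tfrac{\pi}{3}\cdot\tfrac{1}{2\Lambda(2)}=1$. Setting $s=\tfrac12$ in the $s_\beta$ term (where $\nu_2\mapsto\tfrac{\nu+1}{2}$ and $\nu_1-\nu_2+\tfrac32\mapsto\tfrac{\nu}{2}+1$) yields $E\rb{-n_1+\tfrac{y_1}{y_2}i,\tfrac{\nu+1}{2}}y_1^{\nu/2+1}y_2^{\nu/2+1}$, matching $C_{\beta,s_\beta}^\alpha$; and in the $s_\beta s_\alpha s_\beta$ term (where $2\nu_2\mapsto\nu+1$, $\nu_1-\nu_2\mapsto\tfrac{\nu-1}{2}$, and $-\nu_2+\tfrac32\mapsto-\tfrac{\nu}{2}+1$) it yields $\tfrac{\Lambda(\nu)}{\Lambda(\nu+1)}\tfrac{\Lambda(\nu+1)}{\Lambda(\nu+2)}E\rb{-n_1+\tfrac{y_1}{y_2}i,\tfrac{\nu-1}{2}}y_1^{-\nu/2+1}y_2^{-\nu/2+1}$, matching $C_{\beta,s_\beta s_\alpha s_\beta}^\alpha$.

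The main obstacle I expect is the rigorous justification of interchanging $\Res_{s=1/2}$ with the constant-term integration over $N_\alpha(\Z)\bs N_\alpha(\R)$: one must know the meromorphic continuation of $E_0$ is uniform enough in $g$ on the compact integration domain for the residue to be taken under the integral sign. This is supplied by the general theory of Langlands \cite{Langlands1976, MW1995} (the framework underlying \Cref{intertwiningFE}), which provides the continuation of $E_0$ together with that of its constant terms; since the latter are already the explicit meromorphic functions of $s$ recorded in \Cref{constterm:minSiegel}, the residue can simply be read off term by term. A secondary technical point is the holomorphy claim for the non-contributing terms, which for special $\nu$ (where zeros of $\Lambda$ sit in the denominators, or where $E\rb{\cdot,\tfrac{\nu}{2}+s}$ develops a pole at $s=\tfrac12$) should be handled by analytic continuation in $\nu$ from the generic case.
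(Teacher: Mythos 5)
Your proposal is correct and is exactly the paper's route: the paper obtains all the corollaries of \Cref{section:degenerate} by writing $E_\beta(g,\nu,1)$ as $\frac{\pi}{3}\Res_{s=1/2}E_0\rb{g,\rb{\nu,\frac{\nu}{2}+s}}$ via \Cref{Eisenstein3} and taking the residue of \Cref{constterm:minSiegel} term by term, which is precisely your computation (including the identification of the two surviving Weyl elements $s_\beta$, $s_\beta s_\alpha s_\beta$ and the normalisation $\frac{\pi}{3}\cdot\frac{1}{2\Lambda(2)}=1$). You in fact supply more detail than the paper, which states the residue-taking step without justification.
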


\begin{cor}\label{constterm:NSNS}
The constant term for $E_\beta(g,\nu,1)$ along the Jacobi parabolic is given by
\ba
C_\beta(g,\nu,1) = C_{\beta,\id} (g,\nu,1) + C_{\beta, s_\alpha s_\beta} (g,\nu,1) + C_{\beta, s_\alpha s_\beta s_\alpha} (g,\nu,1),
\ea
where
\ba
C_{\beta, \id} (g,\nu,1) &=  y_1^{\nu+2},\\
C_{\beta, s_\alpha s_\beta}  (g,\nu,1) &= \frac{\Lambda(\nu+1)}{\Lambda(\nu+2)} E\rb{-n_5+y_2^2i, \frac{\nu}{2}} y_1,\\
C_{\beta, s_\alpha s_\beta s_\alpha} (g,\nu,1) &= \frac{\Lambda(\nu-1)}{\Lambda(\nu)} \frac{\Lambda(\nu)}{\Lambda(\nu+1)} \frac{\Lambda(\nu+1)}{\Lambda(\nu+2)} y_1^{-\nu+2}.
\ea
\end{cor}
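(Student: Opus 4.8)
The plan is to obtain this constant term as a residue of the corresponding constant term for the \emph{minimal} Eisenstein series, mirroring the way \Cref{EisensteinResidue} realises $E_\beta(g,\nu,1)$ itself as a residue of $E_0$. By \Cref{EisensteinResidue} we have $E_\beta(g,\nu,1) = \frac{\pi}{3}\Res_{s=1/2} E_0\rb{g, \rb{\nu, \frac{\nu}{2}+s}}$. Since forming the constant term along $P_\beta$ means integrating over the compact quotient $N_\beta(\Z)\bs N_\beta(\R)$, a linear operation that commutes with the contour-integral residue once meromorphic continuation is granted (the swap of $\int d\eta$ and $\frac{1}{2\pi i}\oint ds$ being justified by local uniform convergence of the continued series on the compact quotient), I would first record the identity $C_\beta(g,\nu,1) = \frac{\pi}{3}\Res_{s=1/2} C_0^\beta\rb{g, \rb{\nu, \frac{\nu}{2}+s}}$, where $C_0^\beta$ is the constant term of the minimal series along $P_\beta$ computed in \Cref{constterm:minNS}.

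The computational core is then to substitute $\nu_1 = \nu$ and $\nu_2 = \frac{\nu}{2}+s$ into the four summands of \Cref{constterm:minNS} and extract the residue at $s=1/2$ of each. Under this substitution the spectral parameter of the $\GL(2)$ Eisenstein factor becomes $\nu_2 - \frac{\nu_1}{2} = s$ for the terms $w=\id$ and $w = s_\alpha s_\beta s_\alpha$, and becomes $\frac{\nu_1}{2} = \frac{\nu}{2}$ (independent of $s$) for $w = s_\alpha$ and $w = s_\alpha s_\beta$; the $\Lambda$-quotients turn into factors such as $\Lambda(\nu\pm 2s)$ and $\Lambda(2s)$. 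I would then locate the source of the pole at $s=1/2$ in each term: for $w=\id$ and $w = s_\alpha s_\beta s_\alpha$ the pole comes from $E(z,s)$, which has a simple pole at $s=1/2$ with residue $\frac{3}{\pi}$; for $w = s_\alpha s_\beta$ it comes from the factor $\Lambda(2s)$, which (as $\Lambda$ has a simple pole at $1$ with residue $1$) contributes residue $\frac12$ at $s=1/2$; and for $w = s_\alpha$ all factors are regular at $s=1/2$, so that term contributes nothing. This explains precisely why the final answer has three terms rather than four.

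Finally I would assemble the pieces and clear constants. The $w=\id$ and $w=s_\alpha s_\beta s_\alpha$ residues each carry the factor $\frac{3}{\pi}$, which cancels the $\frac{\pi}{3}$ from \Cref{EisensteinResidue} and, after evaluating the surviving $\Lambda$-quotients at $s=1/2$, yields $y_1^{\nu+2}$ and $\frac{\Lambda(\nu-1)}{\Lambda(\nu)}\frac{\Lambda(\nu)}{\Lambda(\nu+1)}\frac{\Lambda(\nu+1)}{\Lambda(\nu+2)} y_1^{-\nu+2}$ respectively. For $w = s_\alpha s_\beta$ the prefactor $\frac{\pi}{3}\cdot\frac12\cdot\frac{1}{\Lambda(2)}$ simplifies via $\Lambda(2) = \pi^{-1}\Gamma(1)\zeta(2) = \frac{\pi}{6}$ to exactly $1$, giving $\frac{\Lambda(\nu+1)}{\Lambda(\nu+2)} E\rb{-n_5+y_2^2 i, \frac{\nu}{2}} y_1$. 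These are precisely $C_{\beta,\id}$, $C_{\beta, s_\alpha s_\beta}$ and $C_{\beta, s_\alpha s_\beta s_\alpha}$ in the statement. The only delicate point is the bookkeeping of where each pole originates, together with the correct chain-rule residue $\frac12$ of $\Lambda(2s)$ and the numerical identity $\Lambda(2)=\frac{\pi}{6}$; once these are in hand the matching is immediate.
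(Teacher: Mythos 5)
Your proposal is correct and is essentially the paper's own argument: the paper derives \Cref{constterm:NSNS} by "taking residues" of \Cref{constterm:minNS} at $\nu_2 = \frac{\nu}{2}+s$, $s=\frac12$, via \Cref{EisensteinResidue}, exactly as you do. Your bookkeeping of where each pole lives (the $\GL(2)$ factor for $w=\id$ and $w=s_\alpha s_\beta s_\alpha$, the factor $\Lambda(2s)$ with residue $\frac12$ and the identity $\Lambda(2)=\frac{\pi}{6}$ for $w=s_\alpha s_\beta$, and regularity for $w=s_\alpha$) is accurate and usefully more explicit than the paper's one-line justification.
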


\subsection{Fourier coefficients}
Likewise, taking the residues of the Fourier coefficients for $E_0(g,\nu)$ allows us to find the Fourier coefficients for $E_\alpha (g,\nu,1)$ and $E_\beta(g,\nu,1)$. 
\begin{cor}\label{Fourier:Siegel}
The Fourier coefficients for $E_\alpha(g,\nu,1)$ are given as follows. For $m_1=m_2=0$ we have
\ba
E_{\alpha, \chi_{0,0}} &= W_{s_\alpha}\rb{g, \rb{\nu+\frac{1}{2}, \nu}, \chi_{0,0}} + \frac{\zeta(\nu+\frac{1}{2})}{\zeta(\nu+\frac{3}{2})} W_{s_\alpha s_\beta} \rb{g, \rb{\nu+\frac{1}{2}, \nu}, \chi_{0,0}}\\
&\hspace{0.5cm}+ \frac{\zeta(2\nu)}{\zeta(2\nu+1)} \frac{\zeta(\nu+\frac{1}{2})}{\zeta(\nu+\frac{3}{2})} W_{s_\alpha s_\beta s_\alpha}  \rb{g, \rb{\nu+\frac{1}{2}, \nu}, \chi_{0,0}}\\
&\hspace{0.5cm}+ \frac{\zeta(\nu-\frac{1}{2})}{\zeta(\nu+\frac{1}{2})} \frac{\zeta(2\nu)}{\zeta(2\nu+1)} \frac{\zeta(\nu+\frac{1}{2})}{\zeta(\nu+\frac{3}{2})} W_{w_0} \rb{g,\rb{\nu+\frac{1}{2},\nu},\chi_{0,0}}.
\ea
For $m_1\ne 0$, $m_2=0$ we have
\ba
E_{\alpha, \chi_{m_1,0}} &= \frac{\sigma_{-2\nu}(m_1)}{\zeta(2\nu+1)} \frac{\zeta(\nu+\frac{1}{2})}{\zeta(\nu+\frac{3}{2})} W_{s_\alpha s_\beta s_\alpha} \rb{g,\rb{\nu+\frac{1}{2}, \nu},\chi_{m_1,0}}.
\ea
For $m_1=0$, $m_2\ne 0$ we have
\ba
E_{\alpha, \chi_{0,m_2}} &= \frac{\sigma_{-\nu-1/2}(m_2)}{\zeta(\nu+\frac{3}{2})} W_{s_\alpha s_\beta} \rb{g,\rb{\nu+\frac{1}{2}, \nu},\chi_{0,m_2}}\\
&\hspace{0.5cm}+ \frac{\sigma_{-\nu+1/2} (m_2)}{\zeta(\nu+\frac{1}{2})} \frac{\zeta(2\nu)}{\zeta(2\nu+1)} \frac{\zeta(\nu+\frac{1}{2})}{\zeta(\nu+\frac{3}{2})} W_{w_0} \rb{g,\rb{\nu+\frac{1}{2}, \nu},\chi_{0, m_2}}.
\ea
For $m_1,m_2\ne 0$ we have $E_{\alpha, \chi_{m_1,m_2}} = 0$.
\end{cor}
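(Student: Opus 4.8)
The plan is to obtain \Cref{Fourier:Siegel} directly from the explicit Fourier expansion of the minimal Eisenstein series in \Cref{Fourier:min} by taking residues, exactly as the constant terms of $E_\alpha(g,\nu,1)$ were obtained from \Cref{constterm:minmin}. By \Cref{Eisenstein3} we have $E_\alpha(g,\nu,E(*,s)) = E_0(g,(\nu+s,\nu))$, and since $E(z,s)$ has a simple pole at $s=\tfrac12$ with residue $\tfrac{3}{\pi}$, \Cref{EisensteinResidue} gives $E_\alpha(g,\nu,1) = \tfrac{\pi}{3}\Res_{s=1/2} E_0(g,(\nu+s,\nu))$. The first step is to integrate this identity against $\bar\chi$ over $N_0(\Z)\bs N_0(\R)$ and to interchange the residue with the absolutely convergent Fourier integral, yielding $E_{\alpha,\chi}(g,\nu) = \tfrac{\pi}{3}\Res_{s=1/2} E_{0,\chi}(g,(\nu+s,\nu))$. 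This reduces everything to residues of the three explicit formulae of \Cref{Fourier:min} under the specialisation $\nu_1 = \nu+s$, $\nu_2 = \nu$.

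Second, I would carry out the residue term by term. Under $\nu_1=\nu+s,\ \nu_2=\nu$ the zeta-arguments appearing in the coefficients become $2\nu_1-2\nu_2 = 2s$, $\ 2\nu_2-\nu_1 = \nu-s$, $\ \nu_1 = \nu+s$, $\ 2\nu_2 = 2\nu$, together with their shifts by $1$ in the denominators. For generic $\nu$ the only factor that is singular at $s=\tfrac12$ is $\zeta(2\nu_1-2\nu_2)=\zeta(2s)$ in a numerator; every other factor is holomorphic and non-vanishing there, and the Whittaker functions $W_w(g,(\nu+s,\nu),\chi)$ depend holomorphically on $s$. Consequently a term survives the residue precisely when its coefficient contains $\zeta(2\nu_1-2\nu_2)$, and I expect the surviving Weyl elements to be exactly those whose reduced word begins with $s_\alpha$, namely $s_\alpha,\ s_\alpha s_\beta,\ s_\alpha s_\beta s_\alpha,\ s_\alpha s_\beta s_\alpha s_\beta$; this is the Siegel counterpart of the selection seen in \Cref{constterm:minSiegel}. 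Each surviving residue is then computed from $\Res_{s=1/2}\zeta(2s)=\tfrac12$, the value $\zeta(2)=\tfrac{\pi^2}{6}$, and evaluation of the remaining regular factors at $s=\tfrac12$, after which the overall constant $\tfrac{\pi}{3}$ is applied.

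Third, I would split by character. For $\chi_{0,0}$ all four surviving terms contribute, producing the four-term formula in $W_{s_\alpha},W_{s_\alpha s_\beta},W_{s_\alpha s_\beta s_\alpha},W_{s_\alpha s_\beta s_\alpha s_\beta}$ evaluated at $(\nu+\tfrac12,\nu)$. For $\chi_{t_1,0}$ with $t_1\neq0$, inspection of the $t_1\neq0$ formula shows that only the $W_{s_\alpha s_\beta s_\alpha}$-term carries $\zeta(2\nu_1-2\nu_2)$ in its numerator (the long-element term here carries $\sigma_{2\nu_2-2\nu_1}(t_1)$ but \emph{not} $\zeta(2s)$), so it alone survives; by contrast, for $\chi_{0,t_5}$ both the $W_{s_\alpha s_\beta}$- and the $W_{s_\alpha s_\beta s_\alpha s_\beta}$-terms retain $\zeta(2\nu_1-2\nu_2)$ and survive. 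Finally, for $\chi_{t_1,t_5}$ with both $t_1,t_5\neq0$ the Fourier coefficient of $E_0$ consists of the single long-element term, whose coefficient $\sigma_{-\nu_2,\nu_2-\nu_1}(t_1,t_5)/(\zeta(2\nu_1-2\nu_2+1)\zeta(2\nu_2-\nu_1+1)\zeta(\nu_1+1)\zeta(2\nu_2+1))$ is regular at $s=\tfrac12$; its residue therefore vanishes, giving $E_{\alpha,\chi_{t_1,t_5}}=0$.

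The main obstacle will be the precise bookkeeping of the archimedean constants rather than any structural difficulty. One must verify that $\Res_{s=1/2}\zeta(2s)=\tfrac12$, the value $\zeta(2)$, the constant $\tfrac{3}{\pi}$ of \Cref{EisensteinResidue}, and the $\Gamma$- and $\pi$-factors implicitly carried by the degenerate Whittaker functions $W_w(g,(\nu+\tfrac12,\nu),\chi)$ combine to give exactly the stated coefficients, written in terms of $\zeta$ (as in \Cref{Fourier:min}) rather than the completed $\Lambda$ used for the constant terms; keeping the normalisations consistent across these two conventions is where care is required. A secondary, routine point is the justification of the interchange of $\Res_{s=1/2}$ with the Fourier integral, which follows from the meromorphic continuation of $E_0(g,(\nu+s,\nu))$ in $s$ together with the uniform convergence of the defining integral over the compact domain $N_0(\Z)\bs N_0(\R)$.
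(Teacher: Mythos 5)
Your proposal is correct and follows exactly the paper's route: the paper obtains \Cref{Fourier:Siegel} by applying \Cref{EisensteinResidue} (via \Cref{Eisenstein3}) to the formulae of \Cref{Fourier:min} and taking the residue at $s=1/2$ under the specialisation $\nu_1=\nu+s$, $\nu_2=\nu$, and your identification of the surviving Weyl terms (precisely those whose coefficient carries $\zeta(2\nu_1-2\nu_2)=\zeta(2s)$ in the numerator, with the long-element term surviving for $\chi_{0,0}$ and $\chi_{0,t_5}$ but not for $\chi_{t_1,0}$ or $\chi_{t_1,t_5}$) matches the statement. Your flagged caveat about tracking the archimedean constants between the $\zeta$- and $\Lambda$-normalisations is indeed where all the remaining care lies, but it does not affect the structure of the argument.
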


\begin{cor}\label{Fourier:NS}
The Fourier coefficients for $E_\beta(g,\nu,1)$ are given as follows. For $m_1=m_2=0$ we have
\ba
E_{\beta, \chi_{0,0}} &= W_{s_\beta} \rb{g, \rb{\nu, \frac{\nu+1}{2}}, \chi_{0,0}} + \frac{\zeta(\nu+1)}{\zeta(\nu+2)} W_{s_\beta s_\alpha} \rb{g, \rb{\nu, \frac{\nu+1}{2}}, \chi_{0,0}}\\
&\hspace{0.5cm}+ \frac{\zeta(\nu)}{\zeta(\nu+1)}\frac{\zeta(\nu+1)}{\zeta(\nu+2)} W_{s_\beta s_\alpha s_\beta} \rb{g, \rb{\nu, \frac{\nu+1}{2}}, \chi_{0,0}}\\
&\hspace{0.5cm}+\frac{\zeta(\nu-1)}{\zeta(\nu)} \frac{\zeta(\nu)}{\zeta(\nu+1)}\frac{\zeta(\nu+1)}{\zeta(\nu+2)} W_{w_0} \rb{g, \rb{\nu, \frac{\nu+1}{2}}, \chi_{0,0}}.
\ea
For $m_1\ne 0$, $m_2=0$ we have
\ba
E_{\beta, \chi_{m_1,0}} &= \frac{\sigma_{-\nu-1}(m_1)}{\zeta(\nu+2)} W_{s_\beta s_\alpha} \rb{g, \rb{\nu, \frac{\nu+1}{2}}, \chi_{m_1,0}}\\
&\hspace{0.5cm}+ \frac{\sigma_{-\nu+1}(m_1)}{\zeta(\nu)} \frac{\zeta(\nu)}{\zeta(\nu+1)}\frac{\zeta(\nu+1)}{\zeta(\nu+2)} W_{w_0} \rb{g, \rb{\nu, \frac{\nu+1}{2}}, \chi_{m_1,0}}.
\ea
For $m_1=0$, $m_2\ne 0$ we have
\ba
E_{\beta, \chi_{0,m_2}} = \frac{\sigma_{\nu}(m_2)}{\zeta(\nu+1)} \frac{\zeta(\nu+1)}{\zeta(\nu+2)} W_{s_\beta s_\alpha s_\beta} \rb{g, \rb{\nu, \frac{\nu+1}{2}}, \chi_{0,m_2}}.
\ea
For $m_1,m_2\ne 0$ we have $E_{\beta, \chi_{m_1,m_2}}= 0$.
\end{cor}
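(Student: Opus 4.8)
The plan is to derive \Cref{Fourier:NS} from the Fourier coefficients of the minimal Eisenstein series already computed in \Cref{Fourier:min}, by realising $E_\beta\rb{g,\nu,1}$ as a residue. By \Cref{Eisenstein3} we have $E_\beta\rb{g,\nu,E(*,s)} = E_0\rb{g,\rb{\nu,\frac{\nu}{2}+s}}$, and since $E(z,s)$ has a simple pole at $s=\frac{1}{2}$ with residue $\frac{3}{\pi}$, \Cref{EisensteinResidue} gives $E_\beta\rb{g,\nu,1} = \frac{\pi}{3}\Res_{s=1/2}E_0\rb{g,\rb{\nu,\frac{\nu}{2}+s}}$. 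The Fourier coefficient $E_{\beta,\chi}(g,\nu)=\int_{N_0(\Z)\bs N_0(\R)}E_\beta(\eta g,\nu,1)\bar\chi(\eta)\,d\eta$ is an integral over a compact domain against a bounded character, so it commutes with extraction of a simple pole, and hence
\ba
E_{\beta,\chi}(g,\nu) = \frac{\pi}{3}\Res_{s=1/2}E_{0,\chi}\rb{g,\rb{\nu,\frac{\nu}{2}+s}}.
\ea
Thus it suffices to substitute $\nu_1=\nu$, $\nu_2=\frac{\nu}{2}+s$ into \Cref{Fourier:min} and read off the residues.

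First I would locate the pole. Under this substitution $2\nu_2-\nu_1=2s$, so among the zeta quotients in \Cref{Fourier:min} the only factor becoming singular at $s=\frac{1}{2}$ is $\zeta(2\nu_2-\nu_1)=\zeta(2s)$, with $\Res_{s=1/2}\zeta(2s)=\frac{1}{2}$; the other arguments $2\nu_1-2\nu_2=\nu-2s$, $\nu_1=\nu$, $2\nu_2=\nu+2s$ remain holomorphic and nonzero for generic $\nu$. For the non-degenerate characters the surviving Weyl terms are therefore exactly those whose coefficient carries $\zeta(2\nu_2-\nu_1)$ in its numerator, namely $w\in\cb{s_\beta,\ s_\beta s_\alpha,\ s_\beta s_\alpha s_\beta,\ s_\alpha s_\beta s_\alpha s_\beta}=s_\beta\cdot W(M_0,M_\beta)$. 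Every remaining term is regular at $s=\frac{1}{2}$ and contributes nothing; in particular the sole long-element term surviving in $E_{0,\chi_{t_1,t_5}}$ has in its denominator only $\zeta(2s+1)=\zeta(2)$ and no $\zeta(2s)$ in its numerator, so it is holomorphic and yields $E_{\beta,\chi_{t_1,t_5}}=0$. When $t_1$ or $t_5$ is nonzero the relevant Whittaker functions are built from $K$-Bessel-type integrals $W(y,\cdot,\psi_t)$ with $t\neq0$ and Beta factors $B\rb{\frac{1}{2},s}$, all holomorphic at $s=\frac{1}{2}$, so the residue factorises cleanly as $\Res_{s=1/2}\zeta(2s)$ times the value of the surviving quotient and Whittaker function at $s=\frac{1}{2}$; collecting this with the prefactor $\frac{\pi}{3}$ produces the stated formulae at parameter $\rb{\nu,\frac{\nu+1}{2}}$.

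The delicate point, which I expect to be the main obstacle, is the degenerate character $\chi_{0,0}$. There the surviving Whittaker functions degenerate to power functions times Beta factors, and $B\rb{\frac{1}{2},\nu_2-\frac{\nu_1}{2}}=B\rb{\frac{1}{2},s}$ carries a $\Gamma\rb{s-\frac{1}{2}}$ that is itself singular at $s=\frac{1}{2}$, so each surviving term acquires a double pole coinciding with the pole of $\zeta(2s)$. One must then show that these second-order contributions organise into a single simple pole with the asserted residue. The cleanest way to control this, I think, is to exploit that $N_0$ is the full unipotent radical, so $E_{\beta,\chi_{0,0}}(g,\nu)$ is precisely the constant term $C_\beta^0(g,\nu,1)$ computed independently in \Cref{constterm:NSNS}; one verifies that the meromorphically continued Whittaker expansion obtained from the residue matches that closed-form expression. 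Reconciling these coincident poles in the degenerate case is the principal technical step, whereas the non-degenerate coefficients follow from the routine residue calculation above.
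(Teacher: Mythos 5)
Your proposal follows the paper's own (essentially one-line) argument for this corollary: specialise \Cref{Fourier:min} at $(\nu_1,\nu_2)=\rb{\nu,\frac{\nu}{2}+s}$, take the residue at $s=\frac{1}{2}$ via \Cref{EisensteinResidue}, and keep exactly those terms whose coefficient carries $\zeta(2\nu_2-\nu_1)=\zeta(2s)$ in its numerator, which correctly yields the vanishing for $\chi_{t_1,t_5}$ and the single surviving term for $\chi_{0,t_5}$. One small correction to your discussion of the degenerate case: the extra singularity at the specialisation point does not come from $B\rb{\frac{1}{2},s}$ (which is regular at $s=\frac{1}{2}$, equal to $\pi$) but from the factor $B\rb{\frac{1}{2},s-\frac{1}{2}}$, i.e.\ $\Gamma\rb{s-\frac{1}{2}}$, inside the degenerate classical Whittaker integral $W(y,\cdot,\psi_0)$ entering $W_{s_\beta}$ — so for generic $\nu$ only that one term, not all four, threatens a double pole — and your fallback of matching $E_{\beta,\chi_{0,0}}$ against $C^0_\beta$ from \Cref{constterm:NSNS} handles this correctly in any case.
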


\begin{rmk}
Expressions in \Crefrange{constterm:Siegelmin}{Fourier:NS} can obviously be further simplified, but the current form has better structural consistency with expressions in \Cref{section:const_terms,section:Fourier_coeff}.
\end{rmk}


\end{document}